\DeclareFontFamily{U}{mathx}{\hyphenchar\font45}
\DeclareFontShape{U}{mathx}{m}{n}{
      <5> <6> <7> <8> <9> <10>
      <10.95> <12> <14.4> <17.28> <20.74> <24.88>
      mathx10
      }{}
\DeclareSymbolFont{mathx}{U}{mathx}{m}{n}
\DeclareMathAccent{\widecheck}{0}{mathx}{"71}
\newtheorem{theorem}{Theorem}
\newtheorem{Property}{Property}
\newtheorem{assumption}{Assumption}
\def\cblue{\textcolor{black}}
\newcommand{\boldh}{\boldsymbol{h}}
\newcommand{\bd}{\boldsymbol{d}}
\newcommand{\bs}{\boldsymbol{s}}
\newcommand{\bsb}{\overline{\bs}}
\newcommand{\bsc}{\widecheck{\bs}}
\newcommand{\bu}{\boldsymbol{u}}
\newcommand{\bv}{\boldsymbol{v}}
\newcommand{\bw}{\boldsymbol{w}}
\newcommand{\bz}{\boldsymbol{z}}
\newcommand{\bzb}{\overline{\bz}}
\newcommand{\bzc}{\widecheck{\bz}}
\newcommand{\bx}{\boldsymbol{x}}
\newcommand{\by}{\boldsymbol{y}}
\newcommand{\bpsi}{\boldsymbol{\psi}}
\newcommand{\bphi}{\boldsymbol{\phi}}
\newcommand{\bphib}{\overline{\bphi}}
\newcommand{\bphic}{\widecheck{\bphi}}
\newcommand{\bdelta}{\boldsymbol{\delta}}
\newcommand{\bchi}{\boldsymbol{\chi}}
\newcommand{\bH}{\boldsymbol{H}}
\newcommand{\cA}{\mathcal{A}}
\newcommand{\cJ}{\mathcal{J}}
\newcommand{\cN}{\mathcal{N}}
\newcommand{\cP}{\mathcal{P}}
\newcommand{\cU}{\mathcal{U}}
\newcommand{\cV}{\mathcal{V}}
\newcommand{\cw}{{\scriptstyle\mathcal{W}}}
\newcommand{\ccw}{{\scriptscriptstyle\mathcal{W}}}
\newcommand{\ccu}{{\scriptscriptstyle\mathcal{U}}}
\newcommand{\bcB}{\boldsymbol{\cal{B}}}
\newcommand{\bcH}{\boldsymbol{\cal{H}}}
\newcommand{\bcQ}{\boldsymbol{\cal{Q}}}
\newcommand{\bcD}{\boldsymbol{\cal{D}}}
\newcommand{\bcw}{\boldsymbol{\cw}}
\newcommand{\bwt}{\widetilde \bw}
\newcommand{\bpsit}{\widetilde \bpsi}
\newcommand{\bphit}{\widetilde \bphi}
\newcommand{\bcwt}{\widetilde\bcw}
\newcommand{\expec}{\mathbb{E}}
\newcommand{\col}{\text{col}}
\newcommand{\diag}{\text{diag}}
\newcommand{\sign}{\text{sign}}
\DeclareMathOperator*{\st}{subject~to}
\newcolumntype{C}[1]{>{\centering\arraybackslash}m{#1}}
\begin{document}
\title{Quantization for decentralized  learning \\ under subspace constraints}
\author{Roula Nassif, Stefan Vlaski, Marco Carpentiero,    Vincenzo Matta,    Marc Antonini,  Ali H. Sayed
\thanks{

 \cblue{The work of R. Nassif was supported in part by ANR JCJC grant ANR-22-CE23-0015-01 (CEDRO project)}. 
 
 A short conference version of this work appears in~\cite{nassif2022finite}.
 
 R. Nassif and M. Antonini are with Universit\'e C\^ote d'Azur, I3S Laboratory, CNRS,  France (email: $\{$roula.nassif,marc.antonini$\}$@unice.fr).  S. Vlaski is with Imperial College London, UK (e-mail: s.vlaski@imperial.ac.uk). M. Carpentiero and V.  Matta are with University of Salerno, Italy (e-mail: $\{$mcarpentiero,vmatta$\}$@unisa.it).  A. H. Sayed is with {the Institute of Electrical and Micro Engineering}, EPFL, Switzerland (e-mail: ali.sayed@epfl.ch).}}

%

\maketitle

\begin{abstract}
In this paper, we consider decentralized optimization problems where agents have individual cost functions to minimize subject to subspace constraints that require the minimizers across the network to lie in low-dimensional subspaces. This constrained formulation includes consensus or single-task optimization as special  cases, and allows for more general task relatedness models such as multitask smoothness and  coupled optimization. In order to cope with  communication constraints, we propose and study an adaptive decentralized strategy where the agents employ \emph{differential randomized quantizers} to compress their estimates before communicating with their neighbors. The analysis shows that, under some general conditions on the quantization noise, and for sufficiently small step-sizes~$\mu$, the  strategy is stable both in  terms of  mean-square error and  average bit rate: by reducing  $\mu$, it is possible to keep the \emph{estimation errors small (on the order of $\mu$) without increasing indefinitely the bit rate as $\mu\rightarrow 0$ \cblue{when variable-rate quantizers are used}.} 
Simulations illustrate   the theoretical findings and the effectiveness of the proposed approach, revealing that decentralized learning is achievable {at the expense of only a}  few bits.
\end{abstract}
\begin{IEEEkeywords}
Stochastic optimization, decentralized subspace projection, differential quantization, randomized quantizers, stochastic performance analysis,  {mixing parameter, decentralized learning, decentralized optimization}.
\end{IEEEkeywords}

\section{Introduction}
Mobile phones, wearable devices, {and autonomous vehicles} are   {examples of} modern distributed networks generating massive amounts of data each day. Due to the growing computational power  {in these devices} and the increasing size  {of the} datasets, coupled with concerns over {privacy}, federated and decentralized training of statistical models have become  {desirable and often necessary~\cite{mcmahan2017communication,li2020federated,kairouz2021advances,alistarch2017qsgd,aledhari2020federated,smith2017federated,sayed2014adaptation,sayed2014adaptive,sayed2013diffusion,nassif2020multitask}.} In  {these} approaches, each participating device (which is referred to as \emph{agent} or \emph{node}) has a local training dataset, which is never uploaded to the server. {The training} data is kept locally {on the} users' devices, {which also serve as computational agents acting on the local data}  in order to update global models of interest. In applications where communication {with} a server becomes a bottleneck, \emph{decentralized} topologies (where agents only communicate with their neighbors) are {attractive} alternatives to federated 
topologies (where a server connects with all remote devices). \cblue{Compared to federated approaches\cite{mcmahan2017communication,li2020federated,kairouz2021advances,alistarch2017qsgd,aledhari2020federated,smith2017federated}, decentralized} implementations  reduce  the high communication cost on the central server since, in this case, {the model} updates are exchanged {locally between} agents without relying on a central coordinator~\cite{koloskova2019decentralized,sayed2014adaptation,sayed2014adaptive,sayed2013diffusion,nassif2020multitask}. 

In practice, there are several issues that arise in the implementations of decentralized algorithms due to the use of a communication network. For instance, in modern distributed networks comprising a massive number of devices, e.g., thousands of participating smartphones, communication can be slower than local computation by many orders of {magnitude} (due to limited resources such as energy and bandwidth). Designers are typically limited by an upload bandwidth of 1MB/s or less~\cite{mcmahan2017communication}. While there have been significant works in the literature on solving optimization and inference problems in a \cblue{decentralized manner~\cite{nedic2009distributed,bertsekas1997new,dimakis2010gossip,sayed2014adaptation,sayed2014adaptive,sayed2013diffusion,nassif2020multitask,plata2017heterogeneous,mota2015distributed,nassif2017diffusion,kekatos2013distributed,alghunaim2020distributed,nassif2020learning,sahu2018cirfe,chen2014multitask,nassif2020adaptation,nassif2020adaptation2,dilorenzo2020distributed,nedic2008distributed,zhao2012diffusion,nassif2016diffusion,thanou2013distributed,lee2021finite,carpentiero2022adaptive,carpentiero2022distributed,kovalev2021linearly,koloskova2019decentralized,sparq2023sparq,singh2021squarm,tang2018communication,taheri2020quantized,zhang2019compressed}, with some exceptions~\cite{koloskova2019decentralized,nedic2008distributed,carpentiero2022adaptive,carpentiero2022distributed,thanou2013distributed,lee2021finite,zhao2012diffusion,nassif2016diffusion,kovalev2021linearly,zhang2019compressed,tang2018communication,sparq2023sparq,singh2021squarm,taheri2020quantized}}, the large majority of these works is not tailored to the specific challenge of limited communication capabilities. 
\cblue{Moreover, the existing works on decentralized approaches are often designed only to solve consensus-based optimization problems. And many of these existing approaches rely mainly on quantization rules that assume that some quantities are represented with very high precision and neglect the associated quantization error in the theoretical analyses.}

In this work, \cblue{we do not make any assumptions about the high-precision representation of specific variables. Moreover, we consider quantization for decentralized learning under subspace constraints by} studying the effects of quantization on the performance of  the following decentralized stochastic gradient approach {from~\cite{nassif2020adaptation,nassif2020adaptation2}}: 
\begin{subequations}
\label{eq: decentralized learning approach}
 \begin{empheq}[left={\empheqlbrace\,}]{align}
\bpsi_{k,i}&=\bw_{k,i-1}-\mu\widehat{\nabla_{w_k}J_k}(\bw_{k,i-1})\label{eq: step1}\\
\bw_{k,i}&=\sum_{\ell\in\cN_k}A_{k\ell}\bpsi_{\ell,i}\label{eq: step2}
 \end{empheq}
\end{subequations}
where $\mu>0$ is a small step-size parameter,  $\cN_k$ is the neighborhood set of agent $k$ (i.e., the set of nodes connected to agent $k$ by a communication link or edge, including  node $k$ itself), $A_{k\ell}$ is an $M_k\times M_\ell$ matrix associated with  link $(k,\ell)$, $w_k\in\mathbb{R}^{M_k}$ is the parameter vector at agent~$k$, and $J_k(w_k):\mathbb{R}^{M_k}\rightarrow\mathbb{R}$ is a differentiable convex cost associated with agent~$k$. {This cost is usually} expressed as the expectation of some loss function $L_k(\cdot)$ and written as $J_k(w_k)=\expec\,L_k(w_k;\by_k)$, where $\by_k$ denotes the random {data at agent $k$}  (throughout the paper, random quantities are denoted in boldface). The expectation is computed {relative to the distribution of the local data}. In the stochastic-optimization framework,  {the statistical} distribution of the data $\by_k$ is usually unknown and, hence, the risks $J_k(\cdot)$ and their gradients $\nabla_{w_k}J_k(\cdot)$ are unknown. In this case, and  instead of using the true gradient, it is common to use approximate gradient vectors  {such as} $\widehat{\nabla_{w_k}J_k}(w_k)=\nabla_{w_k}L_k(w_k;\by_{k,i})${,} where $\by_{k,i}$ represents the data {realization observed} at iteration~$i$~\cite{sayed2014adaptation}. We note that a unique feature of algorithm~\eqref{eq: decentralized learning approach} is the utilization of \emph{matrix} valued combination weights, as opposed to scalar weighting as is commonly employed in  {conventional consensus and diffusion} optimization~\cite{nedic2009distributed,sayed2014adaptation}. As explained in \cblue{the next paragraph}, this generalization allows the network to solve a broader class of multitask optimization problems beyond classical consensus. \cblue{Multitask learning is suitable for network applications where regional differences in the data require more complex models and more flexible algorithms than single-task or consensus implementations. In multitask networks, agents generally need to estimate and track multiple distinct, though related, objectives. For instance, in distributed power  state estimation problems, the local state vectors  at neighboring control centers may overlap partially since the areas in a power system are interconnected~\cite{kekatos2013distributed}. Likewise, in weather forecasting applications, regional differences in the collected data distributions  require agents to exploit the correlation profile in the data for enhanced decision rules~\cite{nassif2020learning}. Other multitask network applications include distributed minimum-cost flow~\cite{mota2015distributed,nassif2017diffusion}, distributed active noise control~\cite{plata2017heterogeneous}, and distributed
sub-optimal beamforming~\cite{nassif2020adaptation}.}

Let $N$ denote the total number of  agents and let $\cw=\col\{w_1,\ldots,w_N\}$ denote {the $M-$th  dimensional vector (where $M=\sum_{k=1}^NM_k$) collecting the} parameter vectors from across the network. Let $\cA$ denote the $N\times N$ block matrix whose $(k,\ell)$-th block is $A_{k\ell}$ if $\ell\in\cN_k$ 
 and $0$ otherwise. It was shown in~\cite[Theorem~1]{nassif2020adaptation} that, for sufficiently small $\mu$ and  for a combination matrix $\cA$ satisfying:
\begin{equation}
\label{eq: condition A}
\cA\,\cU=\cU,\quad \cU^\top\cA=\cU^\top,~~ \text{and~ }\rho(\cA-\cP_{\ccu})<1,
\end{equation}
where $\rho(\cdot)$ denotes the spectral radius of its matrix argument, $\cU$ is {any given} $M\times P$ {full-column} rank matrix ({with} $P\ll M$) that is assumed  to be semi-unitary, i.e., its columns are orthonormal ($\cU^\top\cU=I_P$), and $\cP_{\ccu}=\cU\cU^\top$ is the orthogonal projection matrix onto $\text{Range}(\cU)$, strategy~\eqref{eq: decentralized learning approach} will converge in the mean-square-error sense to the solution of the following subspace constrained optimization problem
:
\begin{equation}
\label{eq: network constrained problem}
\begin{split}
\cw^o=&\arg\min_{\ccw}~J^{\text{glob}}(\cw)\triangleq\sum_{k=1}^NJ_k(w_k)\\
&\st ~\cw\in\text{Range}(\cU).
\end{split}
\end{equation}
 In particular,  it was shown that 
$\limsup_{i\rightarrow\infty}\expec\|w^o_k-\bw_{k,i}\|^2=O(\mu)$ for all $k$,
where $w^o_k$ is the $k$-th $M_k\times 1$ subvector of $\cw^o$. As \cblue{explained in~\cite{nassif2020multitask},~\cite{dilorenzo2020distributed},~\cite[Sec. II]{nassif2020adaptation},} by properly selecting $\cU$ and $\cA$, strategy~\eqref{eq: decentralized learning approach} can be employed to solve different decentralized  optimization problems such as (i) consensus or single-task optimization (where{, as explained in Remark~1 further ahead,} the {agents'} objective is to reach a consensus on the minimizer of the aggregate cost $\sum_{k=1}^NJ_k(w)$)~\cite{nedic2009distributed,sayed2014adaptation,sayed2014adaptive,sayed2013diffusion}, (ii)~decentralized coupled optimization (where the parameter vectors to be estimated at neighboring agents are partially overlapping)~\cite{plata2017heterogeneous,mota2015distributed,kekatos2013distributed,alghunaim2020distributed,sahu2018cirfe}, and (iii) multitask inference under smoothness (where the network parameter vector  $\cw$ to be estimated is  smooth w.r.t. the underlying network topology)~\cite{nassif2020adaptation,chen2014multitask}. \cblue{For instance, while projecting onto the space spanned by the vector of all ones allows to enforce consensus across the network (see Remark~1),  graph smoothness  can in general be promoted by projecting onto the space spanned by the eigenvectors of the graph Lapacian corresponding to small eigenvalues  (see the simulation section~VI for an illustration). Besides being capable of solving single-task or consensus-based optimization problems,  the constrained formulation~\eqref{eq: network constrained problem} and its decentralized solution~\eqref{eq: decentralized learning approach}  are general enough to apply to a wide variety of network applications, including those listed in the previous paragraph.}

The first step~\eqref{eq: step1} in algorithm~\eqref{eq: decentralized learning approach} is the \emph{self-learning} step corresponding to the stochastic gradient descent step on the individual cost $J_k(\cdot)$. This step is followed by the \emph{social learning} step~\eqref{eq: step2} where agent $k$ receives the intermediate estimates $\{\bpsi_{\ell,i}\}$  from its neighbors $\ell\in\cN_k$ and combines them  through $\{A_{k\ell}\}$ to form $\bw_{k,i}$, {which corresponds to the} estimate of $w^o_k$ at agent $k$ and iteration $i$. To alleviate the communication bottleneck resulting from the exchange of the intermediate estimates among agents over many iterations, quantized communication must be considered. In this paper, we  study the effect of quantization on the convergence properties of the decentralized learning approach~\eqref{eq: decentralized learning approach}. {First, we describe in Sec.~\ref{sec: rendomized quantizer} the  class of randomized quantizers considered in this study. Then,} we propose in Sec.~\ref{sec: Decentralized learning in the presence of quantized communications} a \emph{differential randomized quantization} strategy for solving \cblue{problem~\eqref{eq: network constrained problem} -- see~\eqref{eq: decentralized learning approach with quantization} further ahead. Compared with the unquantized version~\eqref{eq: decentralized learning approach}, the new approach  consists of three steps where a quantization step is added. Interestingly, instead of exchanging compressed versions of the intermediate estimates $\{\bpsi_{\ell,i}\}$, and due to differential quantization, the agents in the proposed solution exchange compressed versions of the differences between subsequent iterates (which  tend to have a reduced range when compared to  the intermediate estimates). At the receiver side, prediction rules are implemented in order to reconstruct the intermediate estimates. This step leads to a new set of intermediate estimates $\{\bphi_{\ell,i}\}$ which are then: (i) stored at the receiver in order to be used as \emph{predictors} in the next iteration, and (ii) 
combined according to a modified version of the combination step. In the modified version, a \emph{mixing} parameter $\gamma$ is introduced in order to control the mixing speed of the algorithm. This allows to control the network stability in situations where quantization can lead to network instability.  We} establish in Sec.~\ref{sec: Stochastic performance analysis} that, under some general conditions on the quantization noise \cblue{and mixing parameter}, and for sufficiently small step-sizes $\mu$, the  decentralized quantized approach is stable in the mean-square error sense. {In addition to investigating the mean-square-error stability, we characterize} the steady-state average bit rate of the proposed approach  \cblue{when  \emph{variable-rate} quantizers are used}. 
The analysis  shows that, by properly designing the quantization operators, the iterates generated by the quantized decentralized adaptive  implementation lead to small estimation errors on the order of {$\mu$ (as it happens in the {\em ideal case without quantization}), while concurrently guaranteeing a bounded average bit rate as $\mu\rightarrow 0$. While there exist several useful works in the literature that study decentralized learning approaches in the presence of \cblue{differential quantization~\cite{carpentiero2022distributed,carpentiero2022adaptive,kovalev2021linearly,koloskova2019decentralized,taheri2020quantized,lee2021finite,reisizadeh2019exact,tang2018communication,sparq2023sparq,singh2021squarm,zhang2019compressed}, these} works investigate standard consensus or single-task optimization, and do not consider the subspace constrained {formulation~\eqref{eq: network constrained problem} or multitask variants}. Moreover, \cblue{with some exceptions that consider deterministic optimization~\cite{lee2021finite,reisizadeh2019exact,zhang2019compressed}}, the analyses conducted in these works assume that some quantities (e.g., the norm  or some components  of the vector to be quantized) are represented with very high precision (e.g., machine precision) and the associated quantization error is neglected. On the other hand, the analysis in the current work is not limited to standard consensus optimization and does not assume a machine precision representation of some quantities -- a detailed discussion of related work is provided in Sec.~\ref{sec: related work}. }

\noindent\textbf{Notation:} All vectors are column vectors. Random quantities are denoted in boldface. Matrices are denoted in uppercase letters while vectors and scalars are denoted in lower-case letters. The symbol $(\cdot)^\top$ denotes matrix {transposition}. The operator $\col\{\cdot\}$ stacks the column vector entries on top of each other. The operator $\diag\{\cdot\}$ 
 forms a matrix from block arguments by placing each block immediately below and to the right of its predecessor. The symbol $\otimes $ denotes the Kronecker product. The $M\times M$ identity matrix is denoted by $I_M$. The abbreviation ``w.p.'' is used for ``with probability''.  The Gaussian distribution with mean $m$ and covariance $C$ is denoted by ${\mathcal {N}}(m ,C)$. The notation $\alpha=O(\mu)$ signifies that there exist two positive constants $c$ and $\mu_0$ such that $|\alpha|\leq c\mu$ for all $\mu\leq\mu_0$. A vector of all zeros is denoted by 0. For tables, the header is not counted as a row  (i.e., ``first row'' means the first row after the header). 

\section{Randomized quantizers}
\label{sec: rendomized quantizer}

{In this paper, we  study decentralized learning under subspace constraints in the presence of quantized communications. \emph{Randomized quantizers} $\bcQ(\cdot)$\footnote{{Since the output of a randomized quantizer  is random even for deterministic input, we use  the boldface notation $\bcQ(\cdot)$ to refer to randomized quantizers. 
}}   will be employed instead of deterministic quantizers. As explained in~\cite{carpentiero2022distributed}, deterministic quantizers can lead to severe estimation biases in inference problems. To overcome this issue,  randomized quantizers $\bcQ(\cdot)$ 
are commonly used  to compensate for the bias (on average, over time)~\cite{koloskova2019decentralized,carpentiero2022adaptive,carpentiero2022distributed,kovalev2021linearly,lee2021finite,taheri2020quantized}. This section is devoted to describing the class of randomized quantizers considered throughout the study.}

{For any deterministic input $x\in\mathbb{R}^L$ {with $L$ representing a generic vector length}, the randomized quantizer $\bcQ(\cdot)$ 
is characterized in terms of a probability 
{$\mathbb{P}[\bcQ(x)=y]$ }
 for any $y$ belonging to the set of output levels of the quantizer.  We {consider randomized quantizers $\bcQ(\cdot)$ 
 satisfying the} 
following general property, which 
as explained in the sequel, relaxes the condition on the mean-square error from~\cite{carpentiero2022adaptive,koloskova2019decentralized,alistarch2017qsgd,kovalev2021linearly,carpentiero2022distributed,taheri2020quantized}.}


\begin{Property}{\emph{\textbf{{(Unbiasedness and variance bound)}}.}}
\label{property: quantization noise}
The randomized quantizer $\bcQ(\cdot)$ satisfies the following two conditions: 
\begin{align}
\expec[x-\bcQ(x)]&=0,\label{eq: expectation of the quantization noise deterministic}\\
{\expec\|x-\bcQ(x)\|^2}&\leq \beta^2_{q}\|x\|^2+\sigma^2_{q},\label{eq: expectation squared of the quantization noise deterministic}
\end{align}
for some $\beta^2_{q}\geq 0$ and $\sigma^2_{q}\geq 0${, and where the expectations are evaluated w.r.t. the randomness of $\bcQ(\cdot)$}. When the quantizers are applied to a random input $\bx$, conditions~\eqref{eq: expectation of the quantization noise deterministic} and~\eqref{eq: expectation squared of the quantization noise deterministic} become:
\begin{align}
\expec[\bx-\bcQ(\bx)|\bx]&=0,\label{eq: expectation of the quantization noise random 1}\\
{\expec[\|\bx-\bcQ(\bx)\|^2|\bx]}&\leq \beta^2_{q}\|\bx\|^2+\sigma^2_{q}.\label{eq: expectation squared of the quantization noise random 1}
\end{align}
\hfill\qed
\end{Property}

{Property~\ref{property: quantization noise} is satisfied by many 
{randomized} quantization operators of interest in decentralized learning. Table~\ref{table: examples of quantizers} further ahead lists some typical choices (a detailed comparison of the various schemes will be provided in Sec.~\ref{sec: related work}).  {Many existing works focus on studying decentralized learning approaches in the presence of randomized quantizers that satisfy the \emph{unbiasedness} condition~\eqref{eq: expectation of the quantization noise deterministic} and the \emph{variance bound}~\eqref{eq: expectation squared of the quantization noise deterministic} with the \emph{absolute noise} term $\sigma^2_{q}=0$~\cite{carpentiero2022adaptive,koloskova2019decentralized,alistarch2017qsgd,kovalev2021linearly,carpentiero2022distributed,taheri2020quantized}. In contrast, the analysis in the current work is general and does not require $\sigma^2_{q}$ to be zero. As we will explain in Sec.~\ref{sec: related work}, neglecting the effect of $\sigma^2_{q}$ requires that some quantities (e.g., the norm of the vector to be quantized) are represented with no quantization error, in practice at the machine precision. In the following, we describe a  useful framework for designing randomized quantizers that do not require high-precision quantization of specific variables.}}
\subsection{Uniform and non-uniform randomized quantizers}
\label{subsec: Uniform and non-uniform randomized quantizers}
\subsubsection{Quantizers' design}
\label{sec: Non-uniform randomized quantizers}
 {Let $x\in\mathbb{R}^L$ denote the  input vector to be quantized with $x_j$ representing the $j$-th element of~$x$.} We consider a general quantization rule $\bcQ:\mathbb{R}^L\rightarrow\mathbb{R}^L$ of the {following form -- see Fig.~\ref{fig: illustration nonuniform} for an illustration
 }:
\begin{equation}
\label{eq: non-uniform quantization}
[\bcQ(x)]_j= y_{\boldsymbol{n}(x_j)},
\end{equation}
where {$[\bcQ(x)]_j$ denotes the $j$-th element of~$\bcQ(x)$ and} $y_{\boldsymbol{n}(x_j)}$ is the {quantization output} level (defined further ahead in~\eqref{eq: reproduction level}) associated with a realization of the random index $\boldsymbol{n}(x_j)\in\{m,m+1\}$. The \emph{probabilistic} rule to choose $\boldsymbol{n}(x_j)$ is {as follows}:
\begin{equation}
\label{eq: random index n}
\boldsymbol{n}(x_j)=\left\lbrace\begin{array}{ll}
m,&\text{w.p. }\frac{y_{m+1}-x_j}{y_{m+1}-y_m},\\
m+1,&\text{w.p. }\frac{x_j-y_m}{y_{m+1}-y_m}.
\end{array}
\right.
\end{equation}
Regarding the integer $m$  in~\eqref{eq: random index n}, and motivated by the so-called companding procedure~\cite{gersho1992scalar}{,} which can be conveniently described in terms of a non-linear function and its inverse, it is found according to (the dependence of $m$ on $x_j$ is left implicit in~\eqref{eq: random index n} for ease of notation):
\begin{equation}
\label{eq: equation for m}
m=\lfloor g(x_j)\rfloor,
\end{equation}
where {$\lfloor \cdot\rfloor$ denotes the floor function and} $g:\mathbb{R}\rightarrow\mathbb{R}$ is {some} {strictly increasing} continuous function.  Given the {inverse function} $h=g^{-1}$, the {output} level associated with an index $m$ {is defined} by:
\begin{equation}
\label{eq: reproduction level}
y_m=h(m).
\end{equation}
By evaluating the expected value of $[\bcQ(x)]_j$ w.r.t. the quantizer randomness, we obtain:
\begin{equation}
\expec[\bcQ(x)]_j\overset{\eqref{eq: non-uniform quantization},\eqref{eq: random index n}}=\left(y_m\frac{y_{m+1}-x_j}{y_{m+1}-y_m}+y_{m+1}\frac{x_j-y_m}{y_{m+1}-y_m}\right)=x_j,
\end{equation}
which {establishes} the unbiasedness condition~\eqref{eq: expectation of the quantization noise deterministic} {in Property~\ref{property: quantization noise}}.
\begin{figure*}
\begin{center}
\includegraphics[scale=0.18]{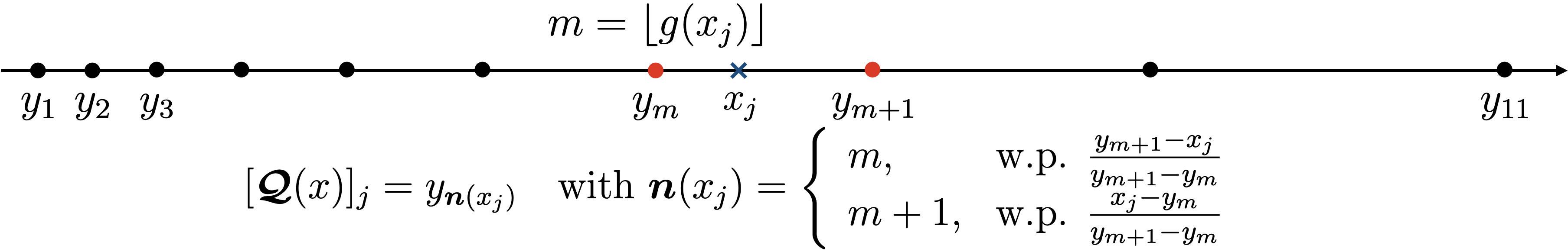}
\caption{{An illustration of the non-uniform randomized quantization scheme defined by~\eqref{eq: non-uniform quantization}--\eqref{eq: reproduction level}. 
The real axis partition is constructed by choosing the functions  $g(\cdot)$ and $h(\cdot)$ according to~\eqref{eq:generallogcomp} with $a=4.17$ and $b=10$.
}}
\label{fig: illustration nonuniform}
\end{center}
\end{figure*}

\noindent \textbf{Example~1.}   {\textbf{({Randomized uniform or dithered quantizer}~\cite{aysal2008distributed,reisizadeh2019exact})}}. By choosing:
\begin{equation}
\label{eq: non-linearities of the uniform}
g(t)=\frac{t}{\Delta},\qquad h(t)=\Delta\cdot t,
\end{equation}
we obtain the \emph{uniform} quantizer described in Table~\ref{table: examples of quantizers}  {(second row)} with  a quantization step $\Delta>0$. {{Condition~\eqref{eq: expectation squared of the quantization noise deterministic} (with $\beta^2_{q}=0$ and $\sigma^2_{q}=\frac{L\Delta^2}{4}$) can be established by letting $p=\frac{y_{m+1}-x_j}{y_{m+1}-y_m}$ and by evaluating the variance:
\begin{align}
\expec(x_j-[\bcQ(x)]_j)^2&=(x_j- y_m)^2p+(x_j- y_{m+1})^2(1-p)\notag\\
&=\Delta^2p(1-p)\leq \frac{\Delta^2}{4},
\end{align}
where we replaced $y_{m+1}-y_m$ by ${\Delta}$ and we used the fact that $p(1-p)\leq \frac{1}{4}$ for $p\in[0,1]$}.}$\hfill\blacksquare$
\begin{table*}
\caption{Examples of quantizers $\bcQ:\mathbb{R}^{L}\rightarrow\mathbb{R}^{L}$ satisfying Property~\ref{property: quantization noise}. {For each scheme, we report the quantization rule, the parameters $\beta_q^2$ and $\sigma^2_q$ in~\eqref{eq: expectation squared of the quantization noise deterministic}, and the bit-budget. }  
{$B_{\text{HP}}$ denotes the number of bits required to encode a  scalar with high precision (Typical values for $B_{\text{HP}}$ are $32$ or $64$). } }
\label{table: examples of quantizers}
\begin{center}
\begin{tabular}{||>{\centering\arraybackslash}m{0.905in} ||c ||>{\centering\arraybackslash}m{0.62in} |>{\centering\arraybackslash}m{0.17in} ||>{\centering\arraybackslash}m{1.17in}||} 
 \hline \hline
\cellcolor[gray]{0.8} Quantizer name &\cellcolor[gray]{0.8} Rule &\cellcolor[gray]{0.8}$\beta^2_{q}$ &\cellcolor[gray]{0.8} $\sigma^2_{q}$&\cellcolor[gray]{0.8} Bit-budget \\ [0.5ex] 
 \hline\hline
 \multirow{1}{*}{No compression~\cite{nassif2020adaptation} }& $\bcQ(x)=x$ &\multirow{1}{*}{$0$} &\multirow{1}{*}{$0$}&{$LB_{\text{HP}}$}\\ \hline
 \multirow{3}{*}{ Probabilistic uniform}&  $\left[\bcQ(x)\right]_j=\Delta\cdot\boldsymbol{n}(x_j) $&\multirow{3}{*}{0} &\multirow{3}{*}{$L\frac{\Delta^2}{4}$}&\multirow{3}{*}{
$\boldsymbol{r}(x)$ defined in~\eqref{eq: bit rate general formula}}\\ 
 \multirow{2}{*}{or dithered} &  $\boldsymbol{n}(x_j)=\left\lbrace\begin{array}{ll}
m,&\text{w.p. }
\frac{(m+1)\Delta-x_j}{\Delta},\\
m+1,&\text{w.p. }\frac{x_j-m\Delta}{\Delta},
 \end{array}\right.\qquad m=\left\lfloor
\frac{x_j}{\Delta}
\right\rfloor$ &&&\\ 
{quantizer~\cite{aysal2008distributed,reisizadeh2019exact}} & $\Delta$ is the quantization step  &&&\\ 
 \hline
 \multirow{5}{*}{Probabilistic ANQ~\cite{lee2021finite} }& $\left[\bcQ(x)\right]_j=y_{\boldsymbol{n}(x_j)}$ &\multirow{5}{*}{$2\omega^2$} &\multirow{5}{*}{$2L\eta^2$}&\multirow{5}{*}{$\boldsymbol{r}(x)$ defined in~\eqref{eq: bit rate general formula}}\\ 
 & {$\boldsymbol{n}(x_j)= \left\lbrace\begin{array}{ll}
m,&\text{w.p. } \frac{y_{m+1}-x_j}{y_{m+1}-y_{m}},\\
m+1,&\text{w.p. } \frac{x_j-y_{m}}{y_{m+1}-y_{m}},
 \end{array}\right.\quad m=\left\lfloor
\sign(x_j)\frac{\ln\left(1+\frac{\omega}{\eta}|x_j|\right)}
{2\ln \left( \omega + \sqrt{1+\omega^2} \right)}
\right\rfloor$} & &&\\ 
&{$y_{m}=\sign(m)\frac{\eta}{\omega}\left[
\left(\omega + \sqrt{1+\omega^2}\right)^{2 |m|}
- 1
\right]$}
 & &&\\ 
&$\omega$ and $\eta$ are two non-negative design parameters & &&\\ \hline
 \multirow{2}{*}{Rand-$c$~\cite{kovalev2021linearly} }& $\left[\bcQ(x)\right]_j=\left\lbrace\begin{array}{ll}
\frac{L}{c}\cdot x_j,&\text{if  }x_j\in\Omega_c\\
0,&\text{otherwise }
 \end{array}\right.$ $\qquad\qquad(c\in\{1,\ldots,L\})$&\multirow{2}{*}{$\left(\frac{L}{c}-1\right)$} &\multirow{2}{*}{$0$}&\multirow{2}{*}{{$cB_{\text{HP}}+c\lceil\log_2(L)\rceil$}}\\ 
& $\Omega_c$ is a set of $c$ randomly selected coordinates& &&\\ 
  \hline
\multirow{1}{*}{Randomized Gossip~\cite{koloskova2019decentralized} }& $\bcQ(x)=\left\lbrace\begin{array}{ll}
\frac{x}{q},&\text{w.p. }q\\
0,&\text{w.p. }1-q
 \end{array}\right.$ $\qquad\qquad(q\in(0,1])$&\multirow{1}{*}{$\left(\frac{1}{q}-1\right)$} &\multirow{1}{*}{$0$}&\multirow{1}{*}{{$(LB_{\text{HP}})q\qquad$ (on average)}}\\ 
 \hline
\multirow{2}{*}{Gradient sparsifier~\cite{reisizadeh2019exact}}& $\left[\bcQ(x)\right]_j=\left\lbrace\begin{array}{ll}
\frac{x_j}{q_j},&\text{w.p. }q_j\\
0,&\text{w.p. }1-q_j
 \end{array}\right.$  &\multirow{2}{*}{$\frac{1}{\min\limits_j\{q_j\}}-1$} &\multirow{2}{*}{$0$}&\multirow{1}{*}{{$\sum\limits_{j=1}^Lq_j( B_{\text{HP}}+\lceil\log_2(L)\rceil)$}}\\ 
 & $q_j$ is the probability that coordinate $j$ is selected  & &&(on average)
 \\ 
 \hline
  \multirow{3}{*}{QSGD~\cite{alistarch2017qsgd} }& $\left[\bcQ(x)\right]_j=\|x\|\cdot\sign(x_j)\cdot\frac{\boldsymbol{n}(x_j,x)}{s}$ &\multirow{3}{*}{$\min\left(\frac{L}{s^2},\frac{\sqrt{L}}{s}\right)$} &\multirow{3}{*}{$0$}&\multirow{3}{*}{{$B_{\text{HP}}+L+L\lceil\log_2(s)\rceil$}}\\ 
 & $\boldsymbol{n}(x_j,x)=\left\lbrace\begin{array}{ll}
m,&\text{w.p. }(m+1)-\frac{|x_j|}{\|x\|}s,\\
 m+1,&\text{w.p. }\frac{|x_j|}{\|x\|}s-m,
 \end{array}\right.\qquad m=\left\lfloor s \frac{|x_j|}{\|x\|}\right\rfloor$ & &&\\ 
&$s$ is the number of quantization levels 
 & &&\\ 
 
 \hline
 \hline
\end{tabular}
\end{center}
\end{table*}

\noindent \textbf{Example~2.}  {\textbf{({Randomized logarithmic companding})}}. For non-uniform quantizers, one popular choice is   {\em logarithmic companding}, which corresponds to  {the following direct and inverse non-linear} functions~\cite{gersho1992scalar}:
\begin{equation}
g(t)=\sign(t)\,a\ln(1+b|t|),\quad {h(t)=\sign(t)\frac{1}{b}\left(e^{\frac{|t|}{a}}-1\right)},
\label{eq:generallogcomp}
\end{equation}
with $a>0$ and $b>0$. Two useful results were established in~\cite{lee2021finite} regarding this choice. First, it was shown that, by setting the constants $a$ and $b$ according to:
\begin{equation}
\label{eq: choice for a and b}
a=\frac{1}{2\ln (\omega+\sqrt{1+\omega^2})},\qquad b=\frac{\omega}{\eta},
\end{equation}
we obtain the following bound on the variance:
\begin{equation}
\label{eq: error bound scutari}
{\expec\|x-\bcQ(x)\|^2\leq \left(\omega\|x\|+\sqrt{L}\eta\right)^2}. 
\end{equation}
The choice~\eqref{eq: choice for a and b} gives the probabilistic ANQ rule reported in Table~\ref{table: examples of quantizers} {(third row)} with the non-linear functions given by:
\begin{equation}
\label{eq: non-linearities for ANQ}
\left\lbrace
\begin{array}{l}
g(t)=\sign(t)\,\frac{\ln\left(1+\frac{\omega}{\eta}|t|\right)}{2\ln (\omega+\sqrt{1+\omega^2})},\\
h(t)=\sign(t)\frac{\eta}{\omega}\left[(\omega+\sqrt{1+\omega^2})^{2|t|}-1\right].
\end{array}
\right.
\end{equation}
Second, it was shown in~\cite{lee2021finite} that, for a fixed number of {output} levels, the choice in~\eqref{eq: choice for a and b} maximizes the range of the input variable while still satisfying the error bound~\eqref{eq: error bound scutari}. This property justifies the efficiency of the choice~\eqref{eq: choice for a and b} in terms of quantization bit rate.

 Now, by taking the limit of $g(t)$ in~\eqref{eq: non-linearities for ANQ} as $\omega\rightarrow 0$  and by applying l'H\^opital's rule, we get:
\begin{equation}
\lim_{\omega\rightarrow 0}g(t)=\sign(t)\frac{|t|}{2\eta}=\frac{t}{2\eta},
\end{equation}
from which we conclude that the setting where  $\omega=0$ allows us to recover the uniform quantizer~\eqref{eq: non-linearities of the uniform} of Example~1.

 Finally, we can relate the quantizer parameters $\omega$ and $\eta$ to the constants $\beta^2_{q}$ and $\sigma^2_{q}$ appearing in~\eqref{eq: expectation squared of the quantization noise deterministic}. In fact, by applying  Jensen's inequality to~\eqref{eq: error bound scutari}, we can see that the bound  in~\eqref{eq: expectation squared of the quantization noise deterministic} is satisfied with:
\begin{equation}
\label{eq: beta and sigma relation}
\beta^2_{q}=\frac{\omega^2}{\alpha},\qquad\sigma^2_{q}=\frac{L\eta^2}{1-\alpha}, \qquad \text{for any } \alpha\in(0,1).
\end{equation}$\hfill\blacksquare$

\subsubsection{{Variable-rate coding scheme and bit budget~\cite{lee2021finite}}}
\label{subsec: Variable-rate coding and bit budget}
 {Note that the quantization rule~\eqref{eq: non-uniform quantization}--\eqref{eq: reproduction level} maps a continuous variable to some integer by partitioning the real  line into {an infinite number of} intervals. Thus, a fixed-rate quantizer (i.e., a quantizer that uses the same number of bits for any input) cannot represent all possible intervals of the partition. On the other hand, assuming a {finite support} for the quantizer input would require some boundedness assumptions (e.g., on the iterates, on the gradient) that are usually violated  in  stochastic optimization theory~\cite{nguyen2018sgd}. By following a standard approach in coding theory, we shall  instead investigate the use of \emph{variable-rate} quantizers which are able to adapt the bit rate based on the quantizer input, e.g., assigning more bits to larger inputs and less bits to smaller ones. In this work, we illustrate the main concepts by focusing on the variable-rate coding scheme proposed in~\cite{lee2021finite} and described in the following
 . This rule has the advantage of not requiring any knowledge about the distribution of the variables to be quantized. This is particularly relevant since such knowledge is typically unavailable in learning applications.}
 
 \begin{figure*}
\begin{center}
\includegraphics[scale=0.33]{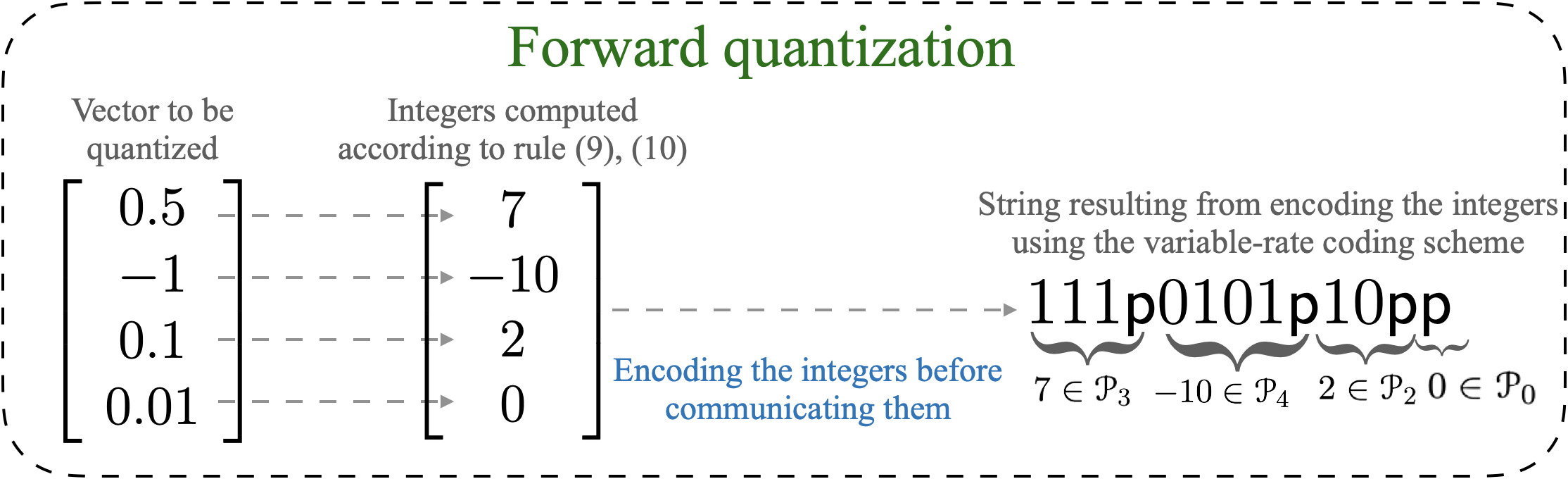}
\caption{{Illustration of the variable-rate coding scheme. The non-uniform randomized quantization scheme defined by~\eqref{eq: non-uniform quantization}--\eqref{eq: reproduction level} is used. The function  $g(\cdot)$ is chosen according to~\eqref{eq:generallogcomp} with $a=4.17$ and $b=10$
. In this example, the codeword associated with the $j$-th element of the ordered set $\mathscr{P}_b$ is given by the $b$-bit binary representation of the integer $j-1$. For instance, the codewords associated with the first element in $\mathscr{P}_3$ (which is $-7$ according to~\eqref{eq: partition}) and the sixth element in $\mathscr{P}_4$ (which is $-10$ according to~\eqref{eq: partition 1}) are $000$ and $0101$, respectively. 
Subsequent integers are separated by a parsing symbol $\mathsf{p}$.}}
\label{fig: forward_quantization}
\end{center}
\end{figure*}
Consider the following partition of the set of integers {$\mathbb{Z}$:
\begin{equation}
\label{eq: partition}
\begin{split}
\mathscr{P}_0=\{0\},&\quad\mathscr{P}_1=\{-1,1\},\quad\mathscr{P}_2=\{-3,-2,2,3\},\\
&\mathscr{P}_3=\{-7,-6,-5,-4,4,5,6,7\},\quad\ldots
\end{split}
\end{equation}
which can be written more compactly as:
\begin{equation}
\label{eq: partition 1}
\mathscr{P}_b=\left\lbrace\begin{array}{ll}
\{0\},&\text{if }b=0\\
\{-1,1\},&\text{if }b=1\\
\{-2^b+1,\ldots,-2^{b-1},2^{b-1},\ldots,2^{b}-1\},&\text{if }b\geq 2.\\
\end{array}
\right.
\end{equation}
Note that} the ensemble of sets $\{\mathscr{P}_b|b=0,1,2,\ldots\}$ forms a partition of $\mathbb{Z}$ {where} each set $\mathscr{P}_b$ has cardinality equal to $2^b$. Thus, {an integer $n\in\mathscr{P}_b$} can be represented with a string of $b$ bits. {Accordingly, from~\eqref{eq: partition 1}, the number of bits to represent an integer  $n\in\mathbb{Z}$ can be determined according to}:
\begin{equation}
b=\lceil\log_2(|n|+1)\rceil,
\end{equation}
{where $\lceil \cdot\rceil$ denotes the ceiling function. When the rule~\eqref{eq: non-uniform quantization}--\eqref{eq: reproduction level} is implemented to quantize an input vector $x\in\mathbb{R}^L$, a sequence of integers will be generated according~\eqref{eq: non-uniform quantization} and~\eqref{eq: random index n}.  These integers must be encoded before being transmitted over the communication links{--see Fig.~\ref{fig: forward_quantization} for an illustration}.
 Thus, we}  need to encode sequences of integers $n_1,n_2,\ldots,$ with different integers belonging in general to different partitions $\mathscr{P}_b$. {At the receiver side, and since the transmitted integers are unknown, the decoder does not know the number of bits used for each integer.} 
  In order to solve this issue, the work~\cite{lee2021finite} proposes to consider an overall encoder alphabet $\mathscr{S}$ made of two binary digits plus a parsing symbol $\mathsf{p}$, namely, $\mathscr{S}=\{0,1\}\cup\{\mathsf{p}\}$. In this way, we can encode a sequence of integers $n_1,n_2,\ldots,$ by first encoding each of the individual integers using a number of bits determined by the corresponding partition, and then using the parsing symbol to separate subsequent integers. For example, a string of the form {(see also Fig.~\ref{fig: forward_quantization})}:
\begin{equation}
\underbrace{\mathsf{111\mathsf{p}}}_{n_1}\underbrace{0101\mathsf{p}}_{n_2}\underbrace{10\mathsf{p}}_{n_3}\underbrace{\mathsf{p}}_{n_4}
\end{equation}
{corresponds to integer $n_1\in\mathscr{P}_3$, followed by $n_2\in\mathscr{P}_4$, $n_3\in\mathscr{P}_2$, and $n_4=0$} (only the parsing symbol since partition $\mathscr{P}_0=\{0\}$ requires 0 bits).
Accounting for the parsing symbol, {observe that} the total number of {ternary digits (considering the ternary alphabet $\mathscr{S}$)} required to encode an integer~$n$ is: 
\begin{equation}
1+\lceil\log_2(|n|+1)\rceil,
\end{equation}
which corresponds to a number of bits equal {to:
\begin{equation}
\label{eq: bit budget for variable rate}
\log_2(3)(1+\lceil\log_2(|n|+1)\rceil)
\end{equation}
since one ternary digit is equivalent to $\log_2(3)$ bits of information.} Considering now the {quantization rule~\eqref{eq: non-uniform quantization}--\eqref{eq: reproduction level} which is} applied entrywise to a vector $x\in\mathbb{R}^{L}$, we find from~\eqref{eq: bit budget for variable rate} that the {overall (random)} bit budget is equal to:
\begin{equation}
\label{eq: bit rate general formula}
\boldsymbol{r}(x)=\log_2(3)\sum_{j=1}^L(1+\lceil\log_2(|\boldsymbol{n}(x_j)|+1)\rceil).
\end{equation}

\section{Decentralized learning in the presence of quantized communication}
\label{sec: Decentralized learning in the presence of quantized communications}
\subsection{Differential randomized quantization approach}
\begin{figure*}
\begin{center}
\includegraphics[scale=0.4]{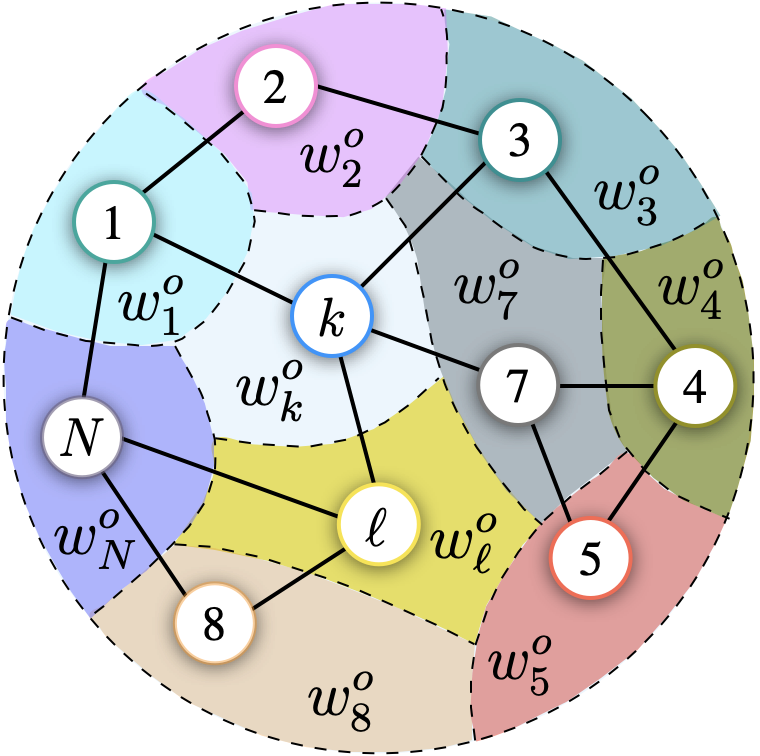}\qquad\qquad
\includegraphics[scale=0.3]{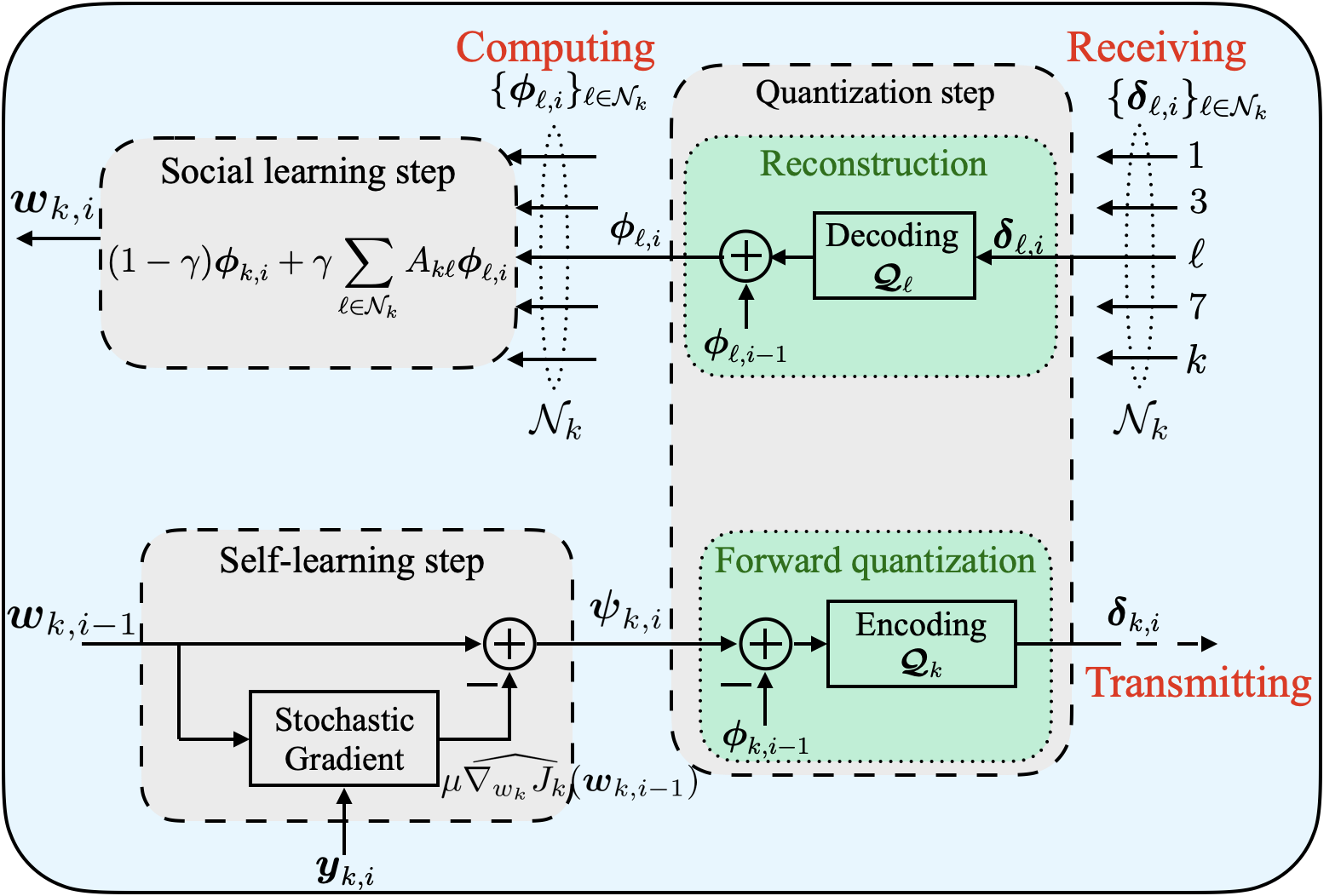}
\caption{\textit{(Left)} An illustration of a multitask network~\cite{nassif2020multitask}. The objective at agent $k$ is to estimate $w^o_k$ (of dimension $M_k\times 1$), the  $k$-th subvector of $\cw^o$ in~\eqref{eq: network constrained problem}. In this example, the neighborhood set of agent $k$ is given by $\cN_k=\{1,k,3,\ell,7\}$. \textit{(Right)} The implementation of the quantized multitask approach~\eqref{eq: decentralized learning approach with quantization} at agent $k$. The quantization step consists of (i) the \emph{forward} step where agent $k$ encodes the difference $\bpsi_{k,i}-\bphi_{k,i-1}$ and sends the resulting vector $\bdelta_{k,i}$ (sequence of symbols or bits) to its neighbors, and (ii) the \emph{reconstruction} step where agent $k$ receives the encoded vectors $\{\bdelta_{\ell,i}\}_{\ell\in\cN_k}$ from its neighbors and decodes them according to~\eqref{eq: quantization function-second stage} to obtain $\{\bphi_{\ell,i}\}_{\ell\in\cN_k}$. The resulting vectors are then used in the social learning step~\eqref{eq: stepc}.
}
\label{fig: illustration figure}
\end{center}
\end{figure*}
Motivated by the approaches proposed \cblue{in~\cite{koloskova2019decentralized,carpentiero2022adaptive,carpentiero2022distributed,kovalev2021linearly,lee2021finite,taheri2020quantized,zhang2019compressed,tang2018communication,sparq2023sparq,singh2021squarm}}, we equip~\eqref{eq: decentralized learning approach} with a quantization mechanism by proposing the following decentralized  multitask learning  approach--see Fig.~\ref{fig: illustration figure}:
\begin{subequations}
\label{eq: decentralized learning approach with quantization}
 \begin{empheq}[left={\empheqlbrace\,}]{align}
\bpsi_{k,i}&=\bw_{k,i-1}-\mu\widehat{\nabla_{w_k}J_k}(\bw_{k,i-1})\label{eq: stepa}\\
\bphi_{k,i}&=\bphi_{k,i-1}+\bcQ_k(\bpsi_{k,i}-\bphi_{k,i-1})\label{eq: stepb}\\
\bw_{k,i}&=(1-\gamma) \bphi_{k,i}+\gamma\sum_{\ell\in\cN_k}A_{k\ell}\bphi_{\ell,i}\label{eq: stepc}
 \end{empheq}
\end{subequations}
where $\gamma\in(0,1]$ is a \emph{mixing} parameter that tunes the degree of cooperation between agents, and $\bcQ_k(\cdot)$ is a  \emph{randomized} quantizer {satisfying Property~\ref{property: quantization noise}}. {Note that, since the quantizer characteristics can vary with $k$, the randomized quantizer becomes $\bcQ_k(\cdot)$ instead of $\bcQ(\cdot)$ with a subscript $k$ added to $\bcQ$. }
Observe that  \emph{differential quantization} is used in~\eqref{eq: stepb} to leverage possible correlation between subsequent iterates~\cite{carpentiero2022distributed}. In this case, instead of communicating compressed versions of the estimates $\bpsi_{k,i}$,  the \emph{prediction} error $\bpsi_{k,i}-\bphi_{k,i-1}$ is quantized at agent $k$ and then \cblue{transmitted~\cite{koloskova2019decentralized,carpentiero2022distributed,carpentiero2022adaptive,lee2021finite,taheri2020quantized,kovalev2021linearly,zhang2019compressed,sparq2023sparq,singh2021squarm,tang2018communication}}.  
At each iteration~$i$, agent $k$ performs the \emph{forward quantization} (see Fig.~\ref{fig: illustration figure}) by mapping the real-valued vector $\bpsi_{k,i}-\bphi_{k,i-1}$ into a quantized vector $\bdelta_{k,i}$, sends $\bdelta_{k,i}$ to its neighbors through \cblue{{ideal}} communication links \cblue{(i.e., it is assumed that node $k$ can transmit perfectly and reliably $\bdelta_{k,i}$ to its neighbors)}, receives $\{\bdelta_{\ell,i}\}$ from its neighbors $\ell\in\cN_k$, and performs the \emph{reconstruction}  (see Fig.~\ref{fig: illustration figure}) on each received vector $\bdelta_{\ell,i}$ by first decoding it and then computing $\{\bphi_{\ell,i}\}$ according to step~\eqref{eq: stepb}:
\begin{equation}
\label{eq: quantization function-second stage}
\bphi_{\ell,i}=\bphi_{\ell,i-1}+ \bcQ_{\ell}(\bpsi_{\ell,i}-\bphi_{\ell,i-1}),\qquad\ell\in\cN_k.
\end{equation}
Observe that implementing~\eqref{eq: quantization function-second stage} requires storing the \cblue{\emph{predictors}} $\{\bphi_{\ell,i-1}\}_{\ell\in\cN_k}$ by agent $k$. The reconstructed vectors $\{\bphi_{\ell,i}\}$ are then combined according to~\eqref{eq: stepc} to produce the estimate $\bw_{k,i}$. \cblue{As we will see in Secs.~\ref{sec: Stochastic performance analysis} and~\ref{sec: Theorems insights and observations}, in the presence of \emph{relative} quantization noise (i.e., when $\beta^2_{q}\neq0$), the mixing parameter $\gamma$ in~\eqref{eq: stepc} will be used to control the network stability. However, in the absence of \emph{relative} noise (i.e., when $\beta^2_{q}=0$), the quantization does not affect the network stability, and the parameter $\gamma$ can be set  to one.} 

\noindent\textbf{Remark 1 {(Quantized diffusion-type approach)}}: \cblue{Most prior literature on decentralized quantized learning focuses primarily on single-task consensus  optimization} 
where the objective at each agent is to estimate {the  \emph{same}} $L$-th dimensional vector $w^o$ given by:
\begin{equation}
\label{eq: consensus optimization}
w^o=\arg\min_w\sum_{k=1}^NJ_k(w).
\end{equation} 
\cblue{For this reason, and in order to make the paper self-contained for readers interested in problem~\eqref{eq: consensus optimization}, we explain in this remark how to choose the blocks $\{A_{k\ell}\}$ in~\eqref{eq: decentralized learning approach with quantization} in order to solve problem~\eqref{eq: consensus optimization} as well}. 
 In fact, by setting in~\eqref{eq: network constrained problem} $P=L$ and $\cU=\frac{1}{\sqrt{N}}(\mathds{1}_N\otimes I_L)$ where $\mathds{1}_N$ is the $N\times 1$ vector of all ones, then solving problem~\eqref{eq: network constrained problem} will be equivalent to solving the well-studied consensus problem~\eqref{eq: consensus optimization}. Different algorithms for solving~\eqref{eq: consensus optimization} over strongly-connected networks have been proposed~\cite{sayed2014adaptation,sayed2014adaptive,sayed2013diffusion,bertsekas1997new,nedic2009distributed,dimakis2010gossip}. By choosing an $N\times N$ doubly-stochastic matrix $A=[a_{k\ell}]$ satisfying:
\begin{equation}
\label{eq: doubly stochastic condition}
a_{k\ell}\geq 0,\quad A\mathds{1}_N=\mathds{1}_N,\quad\mathds{1}_N^\top A= \mathds{1}_N^\top,\quad a_{k\ell}=0\text{ if }\ell\notin\cN_k,
\end{equation}
the \emph{diffusion} strategy for instance {would take} the form~\cite{sayed2014adaptation,sayed2014adaptive,sayed2013diffusion}:
\begin{subequations}
\label{eq: diffusion strategy}
 \begin{empheq}[left={\empheqlbrace\,}]{align}
\bpsi_{k,i}&=\bw_{k,i-1}-\mu\widehat{\nabla_{w_k}J_k}(\bw_{k,i-1})\label{eq: diffusion strategy step1}\\
\bw_{k,i}&=\sum_{\ell\in\cN_k}a_{k\ell}\bpsi_{\ell,i}\label{eq: diffusion strategy step2}
 \end{empheq}
\end{subequations}
This strategy can be written in the form of~\eqref{eq: decentralized learning approach} with $A_{k\ell}=a_{k\ell}I_L$ and $\cA=A\otimes I_L$. It can be verified that, when $A$ satisfies~\eqref{eq: doubly stochastic condition} over a strongly connected network, the matrix $\cA$ will satisfy~\eqref{eq: condition A}. Consequently, by setting $A_{k\ell}=a_{k\ell}I_L$ in~\eqref{eq: stepc} with a set of combination coefficients $\{a_{k\ell}\}$ satisfying~\eqref{eq: doubly stochastic condition}, we obtain a new \emph{quantized diffusion-type approach} for solving the consensus  optimization problem~\eqref{eq: consensus optimization}. 
\hfill\qed


\subsection{Related work}
\label{sec: related work}

Table~\ref{table: examples of quantizers}  provides a list of  typical compression schemes with the corresponding  quantization noise parameters $\beta^2_{q}$ and $\sigma^2_{q}$. 
By comparing the reported schemes, we first observe that the ``rand-c'', ``randomized gossip'', and ``gradient sparsifier'' do not really quantize their input. These methods basically map a full vector into a sparse version thereof. In other words, these methods assume that the non-zero vector components are represented with very high (e.g., machine) precision, and that the overall compression gain lies in representing few entries of the input vector. {For instance, under the  ``rand-c'' scheme, $c$ randomly selected  components of the input vector are encoded with very high precision ($32$ or $64$ bit are typical values for encoding a scalar), and then the resulting bits are communicated over the links in addition to the bits encoding the locations of the selected components. Such} schemes should be more properly referred to as \emph{compression operators}, rather than \cblue{quantizers~\cite{kovalev2021linearly,lee2021finite,koloskova2019decentralized}}. The idea behind sparsification operators is that, when the number of vector components is large, {the gain resulting from encoding  a \textit{few} randomly selected  components compensates for the high precision required to represent them.}   
The QSGD scheme in Table~\ref{table: examples of quantizers} uses a different approximation rule. It assumes that the norm of the input vector is represented with high precision. {In addition to encoding the norm, this rule requires $L$-bits to encode the signs of the vector components and $L\lceil \log_2(s)\rceil$ to encode the levels. It} 
tends  to be well-suited {for} high-dimensional settings where the number of components is large. 
{As it can be observed from Table~\ref{table: examples of quantizers}, the sparsity-based schemes and the QSGD scheme  have an \emph{absolute noise} term $\sigma^2_{q}=0$. 
As explained in~\cite{lee2021finite}, neglecting the effect of $\sigma^2_{q}$ requires that some quantities (e.g., the norm of the input vector  in QSGD) are represented with no quantization error\cblue{\footnote{\cblue{In particular, it is shown in~\cite[corollary~9]{lee2021finite} that the design of unbiased quantizers satisfying condition~\eqref{eq: expectation squared of the quantization noise random 1} with $\sigma^2_q=0$ using a finite number of quantization points is not feasible.}}}.} 
{In comparison, the probabilistic uniform and ANQ schemes \emph{do not make any assumptions on the high-precision quantization of specific variables} and   have a \emph{non-zero absolute noise} term~$\sigma^2_{q}$.}

{A detailed analysis of the decentralized strategy~\eqref{eq: decentralized learning approach with quantization} in the presence of both the \emph{relative} (captured through $\beta^2_{q}$) and  the \emph{absolute} (captured through $\sigma^2_{q}$) quantization noise terms will be conducted in Sec.~\ref{sec: Stochastic performance analysis}. Consequently, our  results apply to a general class of quantizers satisfying Property~\ref{property: quantization noise}, including all those listed in Table~\ref{table: examples of quantizers}. }
Compared to prior works, we provide the following  advances. First, the analysis results  are novel compared to the \cblue{works~\cite{koloskova2019decentralized,carpentiero2022adaptive,carpentiero2022distributed,alistarch2017qsgd,kovalev2021linearly,sparq2023sparq,singh2021squarm}} since we do not make any assumptions about the high-precision quantization of specific variables. \cblue{Although no such assumptions are made  in the work~\cite{tang2018communication}, the analysis there is performed in the absence of the relative quantization and gradient noise terms. }
{Our results are also novel in comparison to~\cite{lee2021finite}} 
 since we consider: i)  learning under \emph{subspace constraints}; ii)  a \emph{stochastic} optimization setting with non-diminishing step-size; iii) combination policies that can lead to \emph{non-symmetric} combination matrices; and iv) a network \emph{global}, as opposed to local, strong convexity condition on the individual costs -- see condition~\eqref{eq: hessian} further ahead.  {Moreover, the work~\cite{lee2021finite} considers only deterministic {optimization} and {does} not deal with gradient approximation or mean-square error stability. In} addition, it must be noted that even the \cblue{works~\cite{koloskova2019decentralized,carpentiero2022adaptive,carpentiero2022distributed,alistarch2017qsgd,kovalev2021linearly,sparq2023sparq,singh2021squarm,tang2018communication}}  do not consider our more general setting that simultaneously addresses i), ii), iii),~and~iv). 
 
 \cblue{As the derivations in the next section will reveal, characterizing the behavior of the decentralized learning system under the aforementioned conditions is challenging  due to at least two factors. First, the quantization noise that  interferes with the operation of the algorithm. It is therefore important to assess its impact on the algorithm's performance and stability by extending the mean-square analyses of the works~\cite{sayed2014adaptation,sayed2014adaptive,nassif2020learning,nassif2020adaptation}, which consider decentralized learning  in the absence of quantization. Second, as we will see later, when we remove the common assumption about the high precision quantization of specific variables (in which case we use variable-rate quantization as suggested in Sec.~\ref{subsec: Variable-rate coding and bit budget}), then the quantizer resolution is required to increase as $\mu\rightarrow 0$ to guarantee small mean-square errors. 
 It therefore becomes important to establish the bit rate stability of the network, namely, to show that the average bit rate remains finite as $\mu\rightarrow 0$. To the best of our knowledge, this analysis has not been carried out before. Existing works investigate either fixed-rate quantizers~\cite{carpentiero2022distributed,carpentiero2022adaptive,koloskova2019decentralized,kovalev2021linearly} or variable-rate quantizers 
  without streaming data~\cite{lee2021finite}.
 }

 In summary, we provide the following main contributions.
\begin{itemize}
\item We propose a decentralized strategy for learning and adaptation over networks under subspace constraints. This strategy is able to operate under finite communication rate by employing \emph{differential randomized quantization}.
\item We provide a detailed characterization of the {proposed approach} for a {general} class of quantizers and compression operators satisfying {Property~\ref{property: quantization noise}, both in terms of mean-square stability and communication resources}. 
\item {The analysis reveals the following useful conclusions
. First,   the quantization error does not impair the network mean-square stability.  Second, in the absence of the absolute quantization noise term, and at the expense of communicating some quantities with high precision, the iterates generated by the quantized approach~\eqref{eq: decentralized learning approach with quantization} lead to small estimation errors on the order of $\mu$, as it happens in the unquantized case~\eqref{eq: decentralized learning approach}. In the presence of the absolute noise term, the situation becomes more challenging. The analysis reveals that, to guarantee the $O(\mu)$ mean-square-error behavior, the absolute noise term must converge to zero as $\mu\rightarrow 0$. We prove that this result can be achieved with a bit rate that remains {\em bounded} as $\mu\rightarrow 0$, despite the fact that we are requiring an increasing precision as the step-size decreases. In particular, we illustrate one useful strategy (the variable-rate quantization scheme described in Sec.~\ref{subsec: Uniform and non-uniform randomized quantizers}) that achieves the aforementioned goals. }
\end{itemize}

\section{Mean-square and bit rate stability analysis}
\label{sec: Stochastic performance analysis}

\subsection{Modeling assumptions}
\label{subsec: Modeling assumptions}
In this section, we analyze strategy~\eqref{eq: decentralized learning approach with quantization} with a matrix $\cA$ satisfying~\eqref{eq: condition A}  by examining the average squared distance between $\bw_{k,i}$ and $w^o_k$, namely, $\expec\|w^o_k-\bw_{k,i}\|^2$, under the following 
assumptions on the risks $\{J_k(\cdot)\}$, the gradient noise processes $\{\bs_{k,i}(\cdot)\}$ {defined by~\cite{sayed2014adaptation}}:
\begin{equation}
\label{eq: gradient noise process}
\bs_{k,i}(w_k)\triangleq \nabla_{w_k}J_k(w_k)-\widehat{\nabla_{w_k}J_k}(w_k),
\end{equation}
and the randomized quantizers $\{\bcQ_k(\cdot)\}$.

\begin{assumption}{{\emph{\textbf{(Conditions on individual and aggregate costs)}}.}}
\label{assump: assumption of the individual costs}
The individual costs $J_k(w_k)$ are assumed to be twice differentiable {and convex} such that:
\begin{equation}
\label{eq: individual costs}
\lambda_{k,\min}I_{M_k}\leq \nabla^2_{w_k}J_k(w_k)\leq \lambda_{k,\max}I_{M_k},
\end{equation}
{where $\lambda_{k,\min}\geq 0$} for $k=1,\ldots,N$. It is further assumed that, for any $\{w_k\in\mathbb{R}^{M_k}\}$, the  individual costs  satisfy:
\begin{equation}
\label{eq: hessian}
0<\lambda_{\min} I_{P}\leq \cU^\top\emph{\diag}\left\{\nabla^2_{w_k}J_k(w_k)\right\}_{k=1}^N\cU\leq \lambda_{\max} I_{P},
\end{equation}
for some positive parameters $\lambda_{\min}\leq \lambda_{\max}$.\hfill\qed
\end{assumption}
\noindent As explained in~\cite{nassif2020adaptation}, condition~\eqref{eq: hessian} ensures that problem~\eqref{eq: network constrained problem} has a unique minimizer $\cw^o$.

\begin{assumption}{{\emph{\textbf{(Conditions on gradient noise)}}.}}
\label{assump: gradient noise}
The gradient noise process defined in~\eqref{eq: gradient noise process} satisfies for $k=1,\ldots,N$
:
\begingroup
\allowdisplaybreaks\begin{align}
\expec[\bs_{k,i}(\bw_{k,i-1})|\{{\bphi_{\ell,i-1}}\}_{\ell=1}^N]&=0,\label{eq: expectation of the gradient noise}\\
\expec[\|\bs_{k,i}(\bw_{k,i-1})\|^2|\{{\bphi_{\ell,i-1}}\}_{\ell=1}^N]&\leq\beta^2_{s,k}\|\bwt_{k,i-1}\|^2+\sigma^2_{s,k},\label{eq: expectation squared of the gradient noise}
\end{align}
\endgroup
for some $\beta^2_{s,k}\geq 0$ and $\sigma^2_{s,k}\geq 0$.\hfill\qed
\end{assumption}
\noindent As explained in~\cite{sayed2014adaptation,sayed2014adaptive,sayed2013diffusion}, these conditions are satisfied  by many  cost functions of interest in learning and adaptation such as quadratic and  regularized logistic risks.  Condition~\eqref{eq: expectation of the gradient noise} states that the gradient  approximation should be unbiased conditioned on the \cblue{predictors $\{\bphi_{\ell,i-1}\}_{\ell=1}^N$}. Condition~\eqref{eq: expectation squared of the gradient noise} states that the second-order moment of the gradient noise should get smaller for better estimates, since it is bounded by the squared norm of the iterate.

\begin{assumption}{\emph{\textbf{{(Conditions on quantizers)}}.}}
\label{assump: quantization noise}
{In step~\eqref{eq: stepb} of the learning approach, each agent $k$ at time  $i$ applies to the difference $\bchi_{k,i}=\bpsi_{k,i}-\bphi_{k,i-1}$ a randomized quantizer $\bcQ_k(\cdot)$ satisfying Property~\ref{property: quantization noise} with quantization noise parameters $\beta^2_{q,k}$ and  $\sigma^2_{q,k}$. It is assumed that given the past history,   the randomized quantization mechanism depends only on the quantizer input $\bchi_{k,i}$. 
Consequently, from~\eqref{eq: expectation of the quantization noise random 1} and~\eqref{eq: expectation squared of the quantization noise random 1}, we~get:
\begin{align}
\expec[\bchi_{k,i}-\bcQ_k(\bchi_{k,i})|\boldh_i]&=\expec[\bchi_{k,i}-\bcQ_k(\bchi_{k,i})|\bchi_{k,i}]=0,\label{eq: expectation of the quantization noise random}\\
\expec\left[\|\bchi_{k,i}-\bcQ_k(\bchi_{k,i})\|^2|\boldh_i\right]&=\expec\left[\|\bchi_{k,i}-\bcQ_k(\bchi_{k,i})\|^2|\bchi_{k,i}\right]\notag\\
&\leq \beta^2_{q,k}\|\bchi_{k,i}\|^2+\sigma^2_{q,k},\label{eq: expectation squared of the quantization noise random}
\end{align}
where  $\boldh_i$ is the  vector collecting all iterates generated by~\eqref{eq: decentralized learning approach with quantization} before the quantizer is applied to $\bchi_{k,i}$, namely, 
$\Big\{
\{\bphi_{\ell,j}\}_{j=0}^{i-1},\{\bpsi_{\ell,j}\}_{j=1}^{i}
\Big\}_{\ell=1}^N$
.}\hfill\qed
\end{assumption}
\noindent\cblue{Assumption~\ref{assump: quantization noise} formalizes the quantization mechanism (which is in fact a standard mechanism in decentralized learning with randomized quantization) and highlights two fundamental relations~\eqref{eq: expectation of the quantization noise random} and~\eqref{eq: expectation squared of the quantization noise random} that will be useful in the technical derivations.}

\subsection{Network error vector recursion}
 In order to examine the evolution of the iterates $\bw_{k,i}$ generated by algorithm~\eqref{eq: decentralized learning approach with quantization} with respect to the minimizer $\cw^o=\col\{w^o_k\}_{k=1}^N$ defined in~\eqref{eq: network constrained problem}, we start by deriving the network error vector recusion. Let $\bwt_{k,i}=w^o_k-\bw_{k,i}$, $\bpsit_{k,i}=w^o_k-\bpsi_{k,i}$, and $\bphit_{k,i}=w^o_k-\bphi_{k,i}$. Using~\eqref{eq: gradient noise process}  and the mean-value theorem~\cite[pp.~24]{polyak1987introduction},~\cite[Appendix~D]{sayed2014adaptation}, we can express the stochastic gradient vector appearing in~\eqref{eq: stepa} as follows:
\begin{equation}
\label{eq: stochastic gradient vector-mean-value}
\widehat{\nabla_{w_k}J_k}(\bw_{k,i-1})=-\bH_{k,i-1}\bwt_{k,i-1}+b_k-\bs_{k,i}(\bw_{k,i-1})
\end{equation}
where  $\bH_{k,i-1}\triangleq \int_{0}^1\nabla^2_{w_k}J_k(w^o_k-t\bwt_{k,i-1})dt$ and $b_k\triangleq\nabla_{w_k}J_k(w^o_k)$.  By subtracting $w^o_k$ from both sides of~\eqref{eq: stepa}  and by introducing the following network quantities:
\begingroup
\allowdisplaybreaks\begin{align}
b&\triangleq\col\left\{b_1,\ldots,b_N\right\},\label{eq: equation for b}\\
\bs_i&\triangleq\col\left\{\bs_{1,i}(\bw_{1,i-1}),\ldots,\bs_{N,i}(\bw_{N,i-1})\right\},\label{eq: equation for s}\\
\bcH_{i-1}&\triangleq\diag\left\{\bH_{1,i-1},\ldots,\bH_{N,i-1}\right\},\label{eq: equation for H}\\
\bcwt_{i-1}&\triangleq\col\{\bwt_{1,i-1},\ldots,\bwt_{N,i-1}\},\label{eq: collection of the network error vector}
\end{align}
\endgroup
we can show that the network error vector $\bpsit_i=\col\{\bpsit_{k,i}\}_{k=1}^N$ evolves according to:
\begin{equation}
\bpsit_i=\left(I_M-\mu\bcH_{i-1}\right)\bcwt_{i-1}-\mu\bs_i+\mu b.\label{eq: network error vector psi}
\end{equation}
By subtracting $w^o_k$  from both sides of~\eqref{eq: stepc}, by replacing $w^o_k$ by $(1-\gamma) w^o_k+\gamma w^o_k$, and by using  $w^o_k=\sum_{\ell\in\cN_k}A_{k\ell}w^o_{\ell}$~\cite[Sec. III-B]{nassif2020adaptation}, we obtain:
\begin{equation}
\label{eq: evolution of wtk i}
\begin{split}
\bwt_{k,i}&=(1-\gamma) \bphit_{k,i}+\gamma\sum_{\ell\in\cN_k}A_{k\ell}\bphit_{\ell,i}.
\end{split}
\end{equation}
From~\eqref{eq: evolution of wtk i}, we can show {that the} network error vector $\bcwt_{i-1}$ in~\eqref{eq: collection of the network error vector} evolves according to:
\begin{equation}
\bcwt_{i-1}=(1-\gamma)\bphit_{i-1}+\gamma\cA\bphit_{i-1}=\cA'\bphit_{i-1},\label{eq: wt in terms of phi}
\end{equation} 
where 
\begin{eqnarray}
\bphit_{i}&\triangleq\col\{\bphit_{1,i},\ldots,\bphit_{N,i}\}.\label{eq: collection of phi tilde}\\
\cA'&\triangleq(1-\gamma) I_M+\gamma\cA,\label{eq: equation for A'}
\end{eqnarray}  
By  subtracting $w^o_k$ from both sides of~\eqref{eq: stepb} and by adding and subtracting $w^o_k$ to the difference $\bpsi_{k,i}-\bphi_{k,i-1}$, we can write:
\begin{equation}
\label{eq: bphit_k}
\bphit_{k,i}=\bphit_{k,i-1}- \bcQ_k(\bphit_{k,i-1}-\bpsit_{k,i}).
\end{equation}
Let 
\begin{equation}
\label{eq: equation for bchi}
\bchi_{k,i}=\bphit_{k,i-1}-\bpsit_{k,i}.
\end{equation}
 By introducing the quantization error vector:
\begin{equation}
\label{eq: quantization error vector}
 \bz_{k,i}\triangleq\bchi_{k,i}- \bcQ_k(\bchi_{k,i}),
\end{equation}
we can write:
\begin{equation}
\label{eq: bphit_k in terms of the quantization noise}
\bphit_{k,i}=\bpsit_{k,i}+ \bz_{k,i}.
\end{equation}
By {combining}~\eqref{eq: network error vector psi},~\eqref{eq: wt in terms of phi}, and~\eqref{eq: bphit_k in terms of the quantization noise}, we {conclude} that the network  error vector $\bphit_i$ in~\eqref{eq: collection of phi tilde} evolves according to the following dynamics
:
\begin{equation}
\label{eq: network weight error vector}
\boxed{\bphit_i=\bcB_{i-1}\bphit_{i-1}-\mu\bs_i+\mu b+\bz_i}
\end{equation} 
where
\begin{align}
\bcB_{i-1}&\triangleq \left(I_M-\mu\bcH_{i-1}\right)\cA',\label{eq: equation for B}\\
\bz_i&\triangleq\col\left\{ \bz_{k,i}\right\}_{k=1}^N.\label{eq: equation for z}
\end{align}

{In Sec.~\ref{subsec: Mean-square-error stability}, we will first establish the boundedness of $\limsup_{i\rightarrow\infty}\expec\|\bphit_i\|^2$ and then we will use relation~\eqref{eq: wt in terms of phi} to deduce boundedness of $\limsup_{i\rightarrow\infty}\expec\|\bcwt_i\|^2$.} The analysis of recursion~\eqref{eq: network weight error vector} is facilitated by transforming it to a convenient basis using the Jordan canonical decomposition of the matrix $\cA'$ defined in~\eqref{eq: equation for A'}. Now, to exploit the eigen-structure of  $\cA'$, we first recall that a matrix $\cA$ satisfying the conditions in~\eqref{eq: condition A} (for a full-column rank semi-unitary matrix $\cU$) has a Jordan decomposition of the form $\cA=\cV_{\epsilon}\Lambda_{\epsilon}\cV_{\epsilon}^{-1}$ with~\cite[Lemma~2]{nassif2020adaptation}:
\begin{equation}
\label{eq: jordan decomposition of A}
\cV_{\epsilon}=\left[\begin{array}{c|c}
\cU&\cV_{R,\epsilon}
\end{array}\right],~
\Lambda_{\epsilon}=\left[\begin{array}{c|c}
I_P&0\\
\hline
0&\cJ_{\epsilon}
\end{array}\right],~
\cV_{\epsilon}^{-1}=\left[\begin{array}{c}
\cU^\top\\
\hline
\cV_{L,\epsilon}^{\top}
\end{array}\right],
\end{equation} 
where $\cJ_{\epsilon}$ is a Jordan matrix with  eigenvalues (which may be complex but have magnitude less than one) on the diagonal and $\epsilon>0$  on the super-diagonal~\cite[Lemma~2]{nassif2020adaptation},\cite[pp.~510]{sayed2014adaptation}. The parameter $\epsilon$ is chosen small enough to ensure  $\rho(\cJ_{\epsilon})+\epsilon\in(0,1)$~\cite{nassif2020adaptation}. Consequently, the matrix $\cA'$ in~\eqref{eq: equation for A'} has a Jordan decomposition of the form $\cA'=\cV_{\epsilon}\Lambda'_{\epsilon}\cV_{\epsilon}^{-1}$ where:
\begin{equation}
\label{eq: jordan decomposition of A'}
\Lambda'_{\epsilon}=\left[\begin{array}{c|c}
I_P&0\\
\hline
0&\cJ'_{\epsilon}
\end{array}\right], \quad{\text{with }}\cJ'_{\epsilon}\triangleq (1-\gamma) I_{M-P}+\gamma\cJ_{\epsilon}.
\end{equation} 

By multiplying both sides of~\eqref{eq: network weight error vector} from the left by $\cV_{\epsilon}^{-1}$ in~\eqref{eq: jordan decomposition of A},  we obtain the transformed iterates and variables:
\begin{align}
\cV_{\epsilon}^{-1}\bphit_i&=\left[
\begin{array}{c}
\cU^\top\bphit_i\\
\cV_{L,\epsilon}^{\top}\bphit_i
\end{array}
\right]\triangleq\left[
\begin{array}{c}
\bphib_i\\
\bphic_i
\end{array}
\right],\label{eq: transformed variable phi definition}\\
\cV_{\epsilon}^{-1}\bs_i&=\left[
\begin{array}{c}
\cU^\top\bs_i\\
\cV_{L,\epsilon}^{\top}\bs_i
\end{array}
\right]\triangleq\left[
\begin{array}{c}
\bsb_i\\
\bsc_i
\end{array}
\right],\label{eq: transformed variable s definition}\\
\cV_{\epsilon}^{-1}b&=\left[
\begin{array}{c}
\cU^\top b\\
\cV_{L,\epsilon}^{\top}b
\end{array}
\right]\triangleq\left[
\begin{array}{c}
0\\
\widecheck{b}
\end{array}
\right],\label{eq: bias transformed vector}\\
\cV_{\epsilon}^{-1}\bz_i&=\left[
\begin{array}{c}
\cU^\top \bz_i\\
\cV_{L,\epsilon}^{\top}\bz_i
\end{array}
\right]\triangleq\left[
\begin{array}{c}
\bzb_i\\
\bzc_i
\end{array}
\right],
\end{align}
where  in~\eqref{eq: bias transformed vector}  we used the fact that $\cU^\top b=0$ as shown in~\cite[Sec. III-B]{nassif2020adaptation}.  In particular, the transformed components $\bphib_{i}$ and ${\bphic}_{i}$ {evolve according to the recursions}:
\begin{align}
\bphib_{i}&=(I_P-\mu\bcD_{11,i-1})\bphib_{i-1}-\mu\bcD_{12,i-1}\bphic_{i-1}+\bzb_{i}-\mu\bsb_{i}\label{eq: evolution of the centralized recursion}\\
{\bphic}_{i}&=(\cJ'_{\epsilon}-\mu\bcD_{22,i-1})\bphic_{i-1}-\mu\bcD_{21,i-1}\bphib_{i-1}+\bzc_{i}+\mu\widecheck{b}-\mu\bsc_{i}\label{eq: evolution of the centralized recursion 2}
\end{align}
where 
\begin{align}
\bcD_{11,i-1}&\triangleq\cU^{\top}\bcH_{i-1}\cU,\label{eq: definition of D11}\\
 \bcD_{12,i-1}&\triangleq\cU^{\top}\bcH_{i-1}\cV_{R,\epsilon}\cJ_{\epsilon}',\\
\bcD_{21,i-1}&\triangleq \cV_{L,\epsilon}^{\top}\bcH_{i-1}\cU,\\
\bcD_{22,i-1}&\triangleq\cV_{L,\epsilon}^{\top}\bcH_{i-1}\cV_{R,\epsilon}\cJ_{\epsilon}'.\label{eq: definition of D22}
\end{align}

In the following, we shall establish the mean-square-error stability of algorithm~\eqref{eq: decentralized learning approach with quantization}. The analysis will reveal the influence of the step-size $\mu$, the mixing parameter $\gamma$, and the quantization noise  (through  $\{\beta^2_{q,k},\sigma^2_{q,k}\}$) on the network mean-square-error stability and performance, and will provide insights into the design of effective quantizers for decentralized learning under subspace constraints.

\subsection{Mean-square-error stability}
\label{subsec: Mean-square-error stability}
\begin{theorem}{{\emph{\textbf{(Mean-square-error stability)}.}}} 
\label{theorem: Network mean-square-error stability}Consider a network of $N$ agents running the quantized decentralized strategy~\eqref{eq: decentralized learning approach with quantization}  under Assumptions~\ref{assump: assumption of the individual costs},~\ref{assump: gradient noise}, and~\ref{assump: quantization noise}, with a matrix $\cA$ satisfying~\eqref{eq: condition A} for an $M\times P$ (with $P\ll M$)  full-column rank semi-unitary matrix~$\cU$. Let $\gamma\in(0,1]$ be such that:
\begin{equation}
\label{eq: mixing parameter condition theorem}
0<\gamma<\min\left\{1,\frac{1-(\rho(\cJ_{\epsilon})+\epsilon)}{4v_1^2v_2^2\beta^2_{q,\max}(\rho(I-\cJ_{\epsilon})+\epsilon)^2}\right\}.
\end{equation}
{where $v_1=\|\cV_{\epsilon}^{-1}\|$, $v_2=\|\cV_{\epsilon}\|$
, and
${\beta}_{q,\max}^2\triangleq\max_{1\leq k\leq N}\{{\beta}^2_{q,k}\}$.
Then,} the network is mean-square-error stable for sufficiently small step-size~$\mu$, namely, it holds that:
\begin{equation}
\limsup_{i\rightarrow\infty}\expec\|w^o_k-\bw_{k,i}\|^2=\delta O(\mu)+\overline{\sigma}^2_{q} O(\mu^{-1})\label{eq: steady state mean square error},
\end{equation}
for $k=1,\ldots,N$ {and where $\delta$ is a constant given by\footnote{{Using relation~\eqref{eq: bound on norm of J' epsilon} in Appendix~\ref{app: Mean-square-error analysis}, and since $\gamma>0$ and $\rho(\cJ_{\epsilon})+\epsilon\in(0,1)$, it can be verified  that $1-\|\cJ'_{\epsilon}\|\geq\gamma(1-\rho(\cJ_{\epsilon})-\epsilon)>0$.} }:
\begin{equation}
\delta\triangleq 2v_1^2\overline{\sigma}^2_s+\frac{3\|\widecheck{b}\|^2}{1-\|\cJ'_{\epsilon}\|}+\beta^2_{q,\max}v_1^2v_2^2(2v_1^2\overline{\sigma}^2_s+12\|\widecheck{b}\|^2),
\end{equation}
$\overline{\sigma}^2_s=\sum_{k=1}^N\sigma_{s,k}^2$, $\widecheck{b}=O(1)$ is given by~\eqref{eq: bias transformed vector}
, and  $\overline{\sigma}^2_q=\sum_{k=1}^N{\sigma}^2_{q,k}$. \cblue{Convergence to the steady-state value~\eqref{eq: steady state mean square error} is linear at a rate given by:
\begin{equation}
\begin{split}
    \rho\left( \Gamma \right) \leq \max\Big\{& 1 - \mu \sigma_{11} + O(\mu^2), \\
    & \|\mathcal{J}_{\epsilon}'\| + \kappa\| I - \mathcal{J}_{\epsilon}'\|^2 + O(\mu) \Big\} < 1,
    \end{split}
\end{equation}
for some positive constants $\sigma_{11}$ and $\kappa$.}}
\end{theorem}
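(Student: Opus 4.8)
The plan is to analyze the boxed recursion \eqref{eq: network weight error vector} in the transformed coordinates \eqref{eq: evolution of the centralized recursion}--\eqref{eq: evolution of the centralized recursion 2} and to derive from them a coupled pair of scalar inequalities for the energies $\expec\|\bphib_i\|^2$ and $\expec\|\bphic_i\|^2$. The first and most important step is to control the quantizer input $\bchi_i=\col\{\bchi_{k,i}\}$. Combining \eqref{eq: network error vector psi}, \eqref{eq: wt in terms of phi} and \eqref{eq: equation for bchi}, and using $I_M-\cA'=\gamma(I_M-\cA)$, one obtains
\begin{equation}
\label{eq: plan chi}
\bchi_i=\big(\gamma(I_M-\cA)+\mu\bcH_{i-1}\cA'\big)\bphit_{i-1}+\mu\bs_i-\mu b.
\end{equation}
Because $\cA\cU=\cU$ forces $(I_M-\cA)\cU=0$, left-multiplying \eqref{eq: plan chi} by $\cV_{\epsilon}^{-1}$ annihilates the consensus block of the term $(I_M-\cA)\bphit_{i-1}$ and leaves $(I_{M-P}-\cJ_{\epsilon})\bphic_{i-1}$ in the complementary block. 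Hence, up to terms of order $\mu$, the quantizer input is driven entirely by the disagreement component $\bphic_{i-1}$, with a gain controlled by $\gamma\,(\rho(I-\cJ_{\epsilon})+\epsilon)$. This is the mechanism that makes the mixing parameter $\gamma$ the correct knob for taming the state-dependent quantization noise.

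Next I would take conditional expectations of the squared norms of \eqref{eq: evolution of the centralized recursion}--\eqref{eq: evolution of the centralized recursion 2}. Conditioning first on $\boldh_i$ eliminates the quantization-noise cross terms by \eqref{eq: expectation of the quantization noise random}, while a further conditioning on $\{\bphi_{\ell,i-1}\}$ eliminates the gradient-noise cross terms by \eqref{eq: expectation of the gradient noise}; the surviving second-order terms are bounded through \eqref{eq: expectation squared of the quantization noise random} and \eqref{eq: expectation squared of the gradient noise}. Mapping $\bz_i$ into its blocks via $\|\cU^{\top}\|\le 1$ and $\|\cV_{L,\epsilon}^{\top}\|\le v_1$, and inserting the bound on $\expec[\|\bchi_i\|^2\,|\,\boldh_i]$ implied by \eqref{eq: plan chi}, yields a linear comparison system
\begin{equation}
\label{eq: plan system}
\begin{bmatrix}\expec\|\bphib_i\|^2\\ \expec\|\bphic_i\|^2\end{bmatrix}\preceq \Gamma\begin{bmatrix}\expec\|\bphib_{i-1}\|^2\\ \expec\|\bphic_{i-1}\|^2\end{bmatrix}+\begin{bmatrix}c_1\\ c_2\end{bmatrix}.
\end{equation}
Here the $(1,1)$ entry of $\Gamma$ is $1-\mu\lambda_{\min}+O(\mu^2)$, obtained from $\lambda_{\min}I_P\le\bcD_{11,i-1}\le\lambda_{\max}I_P$ in \eqref{eq: hessian}; the $(2,2)$ entry is $\|\cJ'_{\epsilon}\|+O(\mu)$ plus a state-dependent quantization feedback proportional to $v_1^2 v_2^2\beta^2_{q,\max}\gamma^2(\rho(I-\cJ_{\epsilon})+\epsilon)^2$; and the off-diagonal entries are $O(\mu)+O(\gamma^2\beta^2_{q,\max})$. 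Throughout I would use the splitting $\|x+y\|^2\le\frac{1}{a}\|x\|^2+\frac{1}{1-a}\|y\|^2$, with $a$ taken equal to the relevant contraction factor, so that the diagonal contractions stay sharp and all couplings are pushed into the off-diagonals and into the constants $c_1,c_2$.

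The third step is to verify $\rho(\Gamma)<1$. Since $\cJ'_{\epsilon}=(1-\gamma)I_{M-P}+\gamma\cJ_{\epsilon}$ and $\|\cJ_{\epsilon}\|\le\rho(\cJ_{\epsilon})+\epsilon$, we have $\|\cJ'_{\epsilon}\|\le 1-\gamma(1-\rho(\cJ_{\epsilon})-\epsilon)$, so the $\bphic$ mode has a gap below one of order $\gamma$; requiring the quantization feedback in the $(2,2)$ entry to remain below this gap is exactly the condition \eqref{eq: mixing parameter condition theorem} on $\gamma$, and fixing $\gamma$ accordingly keeps $\Gamma_{22}<1$. With $\gamma$ so chosen, $\bphib$ contracts at rate $1-O(\mu)$ and $\bphic$ at rate $1-O(\gamma)$; a determinant (Schur complement) check then shows that, for all sufficiently small $\mu$, the product of the off-diagonal entries is dominated by the product of the two diagonal gaps, so that $\rho(\Gamma)<1$ and the comparison iteration \eqref{eq: plan system} converges.

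Finally I would pass to the limit. The fixed point $(I_2-\Gamma)^{-1}\col\{c_1,c_2\}$ of the stable system gives $\limsup_i\expec\|\bphib_i\|^2$ and $\limsup_i\expec\|\bphic_i\|^2$. The absolute quantization noise $\overline{\sigma}^2_q$ enters $c_1$ as an order-one constant, and dividing by the consensus gap $\mu\lambda_{\min}$ produces the $\overline{\sigma}^2_q\,O(\mu^{-1})$ contribution; the gradient-noise variance $\overline{\sigma}^2_s$ and the bias $\widecheck{b}$ enter only through $O(\mu^2)$ drivers, which after division by the gaps (and after accounting for the quantization amplification captured by $\beta^2_{q,\max}$) produce the $\delta\,O(\mu)$ contribution with the stated constant $\delta$. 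The passage back to the original coordinates is immediate from $\bcwt_i=\cA'\bphit_i$ and $\bphit_i=\cV_{\epsilon}\col\{\bphib_i,\bphic_i\}$, since $\expec\|w^o_k-\bw_{k,i}\|^2\le\expec\|\bcwt_i\|^2\le\|\cA'\|^2 v_2^2(\expec\|\bphib_i\|^2+\expec\|\bphic_i\|^2)$. I expect the main obstacle to be precisely the state-dependent (multiplicative) nature of the quantization noise: since $\expec[\|\bz_i\|^2\,|\,\boldh_i]$ grows with $\|\bchi_i\|^2$, and hence with the current iterate, the noise feeds back into the recursion and can in principle destabilize it; the decomposition \eqref{eq: plan chi}, which confines the leading feedback to the already-contractive disagreement mode $\bphic$, together with the bound \eqref{eq: mixing parameter condition theorem} on $\gamma$, is what keeps this feedback under control.
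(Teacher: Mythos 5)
Your proposal is correct and follows essentially the same route as the paper's Appendix A: transforming to the Jordan basis of $\cA'$, expressing the differential quantizer input as $(I_M-\cA'+\mu\bcH_{i-1}\cA')\bphit_{i-1}+\mu\bs_i-\mu b$ so that the consensus block is annihilated and the $O(1)$ quantization feedback lands only on the contractive $\bphic$ mode, assembling the $2\times 2$ comparison matrix $\Gamma$, deriving the condition on $\gamma$ from the $(2,2)$ gap, and reading off the $\delta\,O(\mu)+\overline{\sigma}^2_q\,O(\mu^{-1})$ rates from $(I-\Gamma)^{-1}$. The only cosmetic differences are that the paper certifies $\rho(\Gamma)<1$ via the column-sum bound $\max\{|a|+|c|,|b|+|d|\}<1$ rather than your determinant/Schur-complement check, and uses a generic constant $\sigma_{11}$ in place of $\lambda_{\min}$ for the consensus-mode contraction.
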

\begin{proof} See Appendix~\ref{app: Mean-square-error analysis}.
\end{proof}

While expression~\eqref{eq: steady state mean square error} in Theorem~\ref{theorem: Network mean-square-error stability} reveals the influence of the \emph{step-size} $\mu$, the \emph{quantization noise} (captured by $\{\overline{\sigma}^2_{q},\beta^2_{q,\max}\}$), and the \emph{gradient noise} (captured by $\overline{\sigma}^2_s$) on the steady-state mean-square error, expression~\eqref{eq: mixing parameter condition theorem} reveals the influence of the \emph{relative quantization noise} term (captured by $\beta^2_{q,\max}$) on the network stability.  {One main conclusion stemming \cblue{from expression~\eqref{eq: steady state mean square error} in Theorem 1} is that the mean-square-error contains an $O(\mu)$ term, which is classically encountered in the unquantized case, plus an $O(\mu^{-1})$ term, which can be problematic  in the small step-size regime \cblue{(if the quantity $\overline{\sigma}^2_{q}=\sum_{k=1}^N{\sigma}^2_{q,k}$ is not coupled with $\mu$). 
Since $\overline{\sigma}^2_{q}$} depends on the quantizers' absolute noise components $\{\sigma^2_{q,k}\}$,   this issue can be addressed by ensuring that $\sigma^2_{q,k}\propto \mu^2$. In this way, we would recover  the small estimation error result $\limsup_{i\rightarrow\infty}\expec\|w^o_k-\bw_{k,i}\|^2=O(\mu)$ observed in the unquantized case. }

{However, this setup requires a careful inspection since small values of $\sigma^2_{q,k}$  imply small quantization errors, which might in principle require large bit rates. Consequently, in the small step-size regime ($\mu\rightarrow 0$), the bit rate might increase without bound when $\sigma^2_{q,k}\propto \mu^2$. Our goal then becomes to find a quantization scheme that achieves the classical small estimation error result $\limsup_{i\rightarrow\infty}\expec\|w^o_k-\bw_{k,i}\|^2=O(\mu)$ while guaranteeing that the bit rate stays bounded as $\mu\rightarrow 0$. In the next theorem, we will be able to show that the variable-rate scheme illustrated in Sec.~\ref{subsec: Uniform and non-uniform randomized quantizers} achieves both objectives.} \cblue{This theoretical finding will be further illustrated in the simulation section~\ref{subsec: Effect of the small  step-size parameter}.}
\subsection{Bit rate stability}
 Before establishing the bit rate stability, we recall that, at each iteration $i$ and agent $k$, the quantizer input  is given by $\bpsi_{k,i}-\bphi_{k,i-1}$,  which is equal to the vector $\bchi_{k,i}$ in~\eqref{eq: equation for bchi}. Consequently, from~\eqref{eq: bit rate general formula}, the bit rate at agent $k$ and iteration $i$ is given by:
\begin{equation}
\label{eq: bit rate general at agent k}
{{r}_{k,i}}=\log_2(3)\sum_{j=1}^{M_k}\left(1+\expec\left [\left\lceil\log_2(|\boldsymbol{n}([\bchi_{k,i}]_j)|+1)\right\rceil\right ]\right),
\end{equation}
where $[\bchi_{k,i}]_j$ denotes the $j$-th entry of the vector $\bchi_{k,i}$.
\begin{theorem}{{\emph{\textbf{(Bit rate stability)}.}}} 
\label{theorem: rate stability} Assume that each agent $k$ employs the randomized quantizer with the non-linearities given by~\eqref{eq: non-linearities for ANQ}, and with the parameters $\omega_k$ and $\eta_k$ scaling according to:
\begin{equation}
\label{eq: condition of the theorem 2}
\omega_k=c, \qquad \eta_k\propto \mu,
\end{equation}
where $c$ is a constant independent of $\mu$, and where the symbol $\propto$ hides a proportionality constant independent of~$\mu$. First, under conditions~\eqref{eq: condition of the theorem 2}, we have:
\begin{equation}
\label{eq: sigma square}
\sigma^2_{q,k}\propto \mu^2.
\end{equation}
Second, {in steady-state,} the average number of bits at agent $k$ {stays bounded} as $\mu\rightarrow 0$, namely
, 
\begin{equation}
\label{eq: rate stability result}
\limsup_{i\rightarrow\infty}\,{r}_{k,i}=O(1).
\end{equation}
\end{theorem}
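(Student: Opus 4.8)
The first assertion is a bookkeeping step: the variance--parameter relation \eqref{eq: beta and sigma relation} gives $\sigma^2_{q,k}=M_k\eta_k^2/(1-\alpha)$, so the scaling $\eta_k\propto\mu$ immediately yields $\sigma^2_{q,k}\propto\mu^2$, while $\beta^2_{q,k}=\omega_k^2/\alpha=c^2/\alpha=O(1)$ stays fixed and therefore remains compatible with the mixing-parameter condition \eqref{eq: mixing parameter condition theorem} of Theorem~\ref{theorem: Network mean-square-error stability}. The real content is the rate bound \eqref{eq: rate stability result}. My plan is to first prove that in steady state the quantizer input is an \emph{order of magnitude smaller} than the network error, namely
\[
\limsup_{i\rightarrow\infty}\expec\|\bchi_{k,i}\|^2=O(\mu^2),
\]
and then to convert this into an $O(1)$ rate through the logarithmic companding law and Jensen's inequality.

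To obtain the mean-square bound I would start from $\bchi_{k,i}=\bphit_{k,i-1}-\bpsit_{k,i}$ in \eqref{eq: equation for bchi}. Writing $\bpsit_{k,i}=\bwt_{k,i-1}+\mu\widehat{\nabla_{w_k}J_k}(\bw_{k,i-1})$, stacking over $k$, and using $\bcwt_{i-1}=\cA'\bphit_{i-1}$ from \eqref{eq: wt in terms of phi} together with $I_M-\cA'=\gamma(I_M-\cA)$, the stacked input becomes $\col\{\bchi_{k,i}\}=\gamma(I_M-\cA)\bphit_{i-1}-\mu\,\col\{\widehat{\nabla_{w_k}J_k}(\bw_{k,i-1})\}$. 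The decisive cancellation is $(I_M-\cA)\cU=0$, which follows from \eqref{eq: condition A}; decomposing $\bphit_{i-1}=\cU\bphib_{i-1}+\cV_{R,\epsilon}\bphic_{i-1}$, the consensus component is annihilated and only the \emph{off-consensus} component survives:
\[
\col\{\bchi_{k,i}\}=\gamma(I_M-\cA)\cV_{R,\epsilon}\,\bphic_{i-1}-\mu\,\col\{\widehat{\nabla_{w_k}J_k}(\bw_{k,i-1})\}.
\]
By the mean-value form \eqref{eq: stochastic gradient vector-mean-value} and the $O(\mu)$ mean-square error guaranteed by Theorem~\ref{theorem: Network mean-square-error stability} (with $\overline{\sigma}^2_q=O(\mu^2)$ its bound reduces to $O(\mu)$), the stacked stochastic gradient is $O(1)$ in mean square, so the last term is $O(\mu^2)$. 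Everything thus reduces to showing $\limsup_{i\rightarrow\infty}\expec\|\bphic_i\|^2=O(\mu^2)$.

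This last bound is the crux and the main obstacle. In contrast to the consensus component $\bphib_i$, which is only $O(\mu)$ in mean square, the off-consensus component obeys the \emph{strictly contractive} recursion \eqref{eq: evolution of the centralized recursion 2}, whose system matrix satisfies $\|\cJ'_\epsilon\|\le 1-\gamma(1-\rho(\cJ_\epsilon)-\epsilon)<1$ with a gap that does not close as $\mu\rightarrow 0$ (for fixed $\gamma$). Its deterministic and gradient-noise forcings $\mu\bcD_{21,i-1}\bphib_{i-1}$, $\mu\widecheck{b}$, and $\mu\bsc_i$ are all $O(\mu^2)$ in mean square, and the quantization forcing $\bzc_i$ satisfies $\expec\|\bzc_i\|^2\le v_1^2(\beta^2_{q,\max}\expec\|\col\{\bchi_{k,i}\}\|^2+\overline{\sigma}^2_q)$ with $\overline{\sigma}^2_q=O(\mu^2)$ by the first part. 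The difficulty is the feedback: $\bzc_i$ re-enters through $\bchi_{k,i}$, which itself depends on $\bphic_{i-1}$. I would therefore close a joint mean-square recursion in $(\expec\|\bphic_i\|^2,\expec\|\col\{\bchi_{k,i}\}\|^2)$ and invoke the mixing-parameter condition \eqref{eq: mixing parameter condition theorem}---the very condition that makes the loop gain contractive in Theorem~\ref{theorem: Network mean-square-error stability}---to conclude that the unique fixed point of this loop is of order $\mu^2$. In essence this is a refinement of the Theorem~\ref{theorem: Network mean-square-error stability} argument that tracks the disagreement direction separately, exploiting that its contraction rate is $\Theta(\gamma)$ rather than the $\Theta(\mu)$ rate of the consensus direction, so that the $\overline{\sigma}^2_q=O(\mu^2)$ forcing produces an $O(\mu^2)$, rather than $O(\mu)$, steady-state error.

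Granting $\limsup_{i\rightarrow\infty}\expec\|\bchi_{k,i}\|^2=O(\mu^2)$, the rate bound is mechanical. From the companding law \eqref{eq: non-linearities for ANQ} with $\omega_k=c$, each index obeys $|\boldsymbol{n}([\bchi_{k,i}]_j)|\le \kappa\ln(1+\tfrac{c}{\eta_k}|[\bchi_{k,i}]_j|)+1$ with $\kappa=1/(2\ln(c+\sqrt{1+c^2}))$. Applying $\lceil z\rceil\le z+1$ and the concavity of $\log_2$ and of $\ln$ (Jensen), the $j$-th summand of \eqref{eq: bit rate general at agent k} is bounded by $\log_2\!\big(\kappa\ln(1+\tfrac{c}{\eta_k}\sqrt{\expec|[\bchi_{k,i}]_j|^2})+2\big)+2$. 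Since $\eta_k\propto\mu$ and $\sqrt{\expec|[\bchi_{k,i}]_j|^2}\le\sqrt{\expec\|\bchi_{k,i}\|^2}=O(\mu)$, the ratio $\tfrac{c}{\eta_k}\sqrt{\expec|[\bchi_{k,i}]_j|^2}=O(1)$, the inner logarithm is $O(1)$, and the whole bound is $O(1)$ uniformly in $\mu$. Summing the $M_k$ coordinates gives $\limsup_{i\rightarrow\infty}r_{k,i}=O(1)$. I expect the genuine work to lie entirely in the $O(\mu^2)$ bound on the quantizer input; the cancellation $(I_M-\cA)\cU=0$ and the $\mu$-independent contractivity of the disagreement recursion are what prevent the rate from diverging like $\log_2|\ln\mu|$, which is exactly what a naive $O(\mu)$ bound on $\expec\|\bchi_{k,i}\|^2$ would produce.
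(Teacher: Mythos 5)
Your proposal is correct and follows essentially the same route as the paper: the scaling $\sigma^2_{q,k}\propto\mu^2$ read off from~\eqref{eq: beta and sigma relation}, the reduction of the rate bound to $\limsup_{i\rightarrow\infty}\expec\|\bchi_{k,i}\|^2=O(\mu^2)$ via the companding law, $\lceil z\rceil\le z+1$, concavity and Jensen's inequality, and the exploitation of the annihilation of the consensus component together with the $\mu$-independent contraction of the disagreement direction. The only remark is that the step you single out as the remaining crux---the $O(\mu^2)$ steady-state bound on $\expec\|\bphic_i\|^2$ and hence on the quantizer input---does not require closing a new joint recursion: it is obtained directly by combining~\eqref{eq: centralized error vector recursion inequality on sum of  delta 1} with~\eqref{eq: single inequality recursion steady state 3 new}, both already established in the proof of Theorem~\ref{theorem: Network mean-square-error stability}, once $\overline{\sigma}^2_q=O(\mu^2)$ from the first part is substituted.
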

\begin{proof}
See Appendix~\ref{app: rate stability}.
\end{proof}

\cblue{To be concrete, we deemed it useful to focus in Theorem~\ref{theorem: rate stability} on the logarithmic companding rule~\eqref{eq: non-linearities for ANQ}. However, it should be noted that the proof in Appendix~\ref{app: rate stability} can be extended to handle more general non-linear functions.}

\section{Theorems~\ref{theorem: Network mean-square-error stability} {and~\ref{theorem: rate stability}:} insights and observations}
\label{sec: Theorems insights and observations}
{The} fundamental conclusion emerging from Theorems~\ref{theorem: Network mean-square-error stability} and~\ref{theorem: rate stability} is that, when compared with~\eqref{eq: decentralized learning approach}, {and when the quantizers are properly designed}, the quantized approach~\eqref{eq: decentralized learning approach with quantization} can guarantee a finite average bit rate whilst still ensuring small estimation errors on the order of $\mu$.   As detailed in the following, there are further  insights that can be gained from the theorems.
 
First, note that in the \emph{absence of quantization}, the analysis allows us to recover the mean-square-error stability result established in~\cite[Theorem~1]{nassif2020adaptation} where it was shown that $\limsup_{i\rightarrow\infty}\expec\|w^o_k-\bw_{k,i}\|^2=O(\mu)$ for all $k$ and for $\gamma=1$.  To see this, {we just set to $0$} the quantization noise parameters $\{\beta_{q,\max}^2,\overline{\sigma}^2_{q}\}$ in Theorem~\ref{theorem: Network mean-square-error stability}.

%

{Second, from our analysis, it is possible to explain the  bit rate stability  result. Consider, for simplicity, the dithered uniform quantizer from Table~\ref{table: examples of quantizers}. Then, setting $\sigma^2_{q,k}\propto\mu^2$ is equivalent to requiring that the quantization step-size $\Delta$ is proportional to $\mu$. Our analysis reveals that, when $\Delta\propto\mu$, the {\em differential} input $\bchi_{k,i}=\bpsi_{k,i}-\bphi_{k,i-1}$ in~\eqref{eq: stepb}  is on the order of $\mu$ at the steady-state\footnote{This can be seen by taking the limits on both sides of relation~\eqref{eq: centralized error vector recursion inequality on sum of  delta 1} and by using~\eqref{eq: single inequality recursion steady state 3 new}.}. In this setting, i) the \emph{effective} range of the  inputs used in the {\em differential} quantizer scheme vanishes as $O(\mu)$ when $\mu\rightarrow 0$, and ii) the quantizer resolution $\Delta\propto\mu$ scales proportionally to the {\em effective} range of the quantizer input. Notably, Theorem~\ref{theorem: rate stability} reveals that the variable-rate scheme is able to adapt the number of bits to this effective range, in such a way that the expected bit rate stays bounded as $\mu\rightarrow 0$.}

 Third, we conclude from Theorem 1 that, while a value $\gamma=1$ could lead to network instability when $\beta^2_{q,\max}\neq 0$, this conclusion does not hold when $\beta^2_{q,\max}= 0$. In other words, the \emph{absolute quantization noise component does not affect the network stability}, and the network remains stable\footnote{In fact, from~\eqref{eq: general form for gamma}--\eqref{eq: constant d definition new} and~\eqref{eq: condition ensuring stability of gamma new},  when $\beta^2_{q,\max}= 0$ and $\gamma=1$, one can observe that a sufficiently small step-size $\mu$ can ensure the stability of the matrix $\Gamma$.} for sufficiently small step-size $\mu$ when $\gamma=1$.

Fourth, by noting that step~\eqref{eq: stepb} can be written alternatively~as:
\begin{equation}
\bphi_{k,i}=\bpsi_{k,i}-\bz_{k,i},
\end{equation}
with $\bz_{k,i}$ given by~\eqref{eq: quantization error vector}, it can be observed that the impact of quantization in~\eqref{eq: stepc} is similar to the impact of exchanging information over the communication links in the presence of  additive noise processes since step~\eqref{eq: stepc} can be written alternatively as (for $\gamma=1$):
\begin{equation}
\bw_{k,i}=\sum_{\ell\in\cN_k}A_{k\ell}(\bpsi_{\ell,i}-\bz_{\ell,i}).\label{eq: stepc additive noise}
\end{equation}
Therefore, in the absence of the relative quantization noise term (i.e., when $\beta^2_{q,\max}=0$), 
we obtain a setting similar to the one previously considered in the context of  single-task~\cite{zhao2012diffusion} and multititask~\cite{nassif2016diffusion} estimation over mean-square-error networks   in the presence of noisy links.  However, notice that the analysis in~\cite{zhao2012diffusion} and~\cite{nassif2016diffusion} is limited to LMS diffusion algorithms, does not exploit the randomized quantizers design, and  does not investigate the average bit rate stability.
 
\cblue{Fifth, observe} that in the \emph{absence of absolute quantization} noise term, i.e., when $\overline{\sigma}^2_{q}=0$, we obtain:
\begin{equation}
\limsup_{i\rightarrow\infty}\expec\|w^o_k-\bw_{k,i}\|^2=O(\mu).\label{eq: steady state mean square error in the absence of the absolute noise component}
\end{equation}
Thus, the relative quantization noise component does not affect the $O(\mu)$ small  estimation error result, but affects the network stability through condition~\eqref{eq: mixing parameter condition theorem}. {Regarding the bit rate, as it can be observed from Table~\ref{table: examples of quantizers} (column 5, rows 4--7), the reported quantization schemes are characterized by a fixed bit-budget that is independent of the quantizer~input, but depends on the high-precision quantization parameter $B_{\text{HP}}$.} \cblue{The theoretical result~\eqref{eq: steady state mean square error in the absence of the absolute noise component} will be  illustrated in Sec.~\ref{subsec: Effect of the small  step-size parameter} by considering the QSGD quantizer of Table~\ref{table: examples of quantizers} (row 7).}

\cblue{Finally, and before discussing the stability condition~\eqref{eq: mixing parameter condition theorem}, we note that the parameter $\gamma$ allows to control $\rho(\cA'-\cP_{\ccu})$ (i.e., the spectral radius of the matrix $\cA'-\cP_{\ccu}$), and consequently, the speed of convergence of $(\cA')^{i}$ to $\cP_{\ccu}$~\cite[Lemma~2]{nassif2020adaptation}.  To see this, note that (by using the Jordan decomposition of the matrix~$\cA'$):
\begin{equation}
\cA'-\cP_{\ccu}=\cV_{\epsilon}\left[\begin{array}{c|c}
0&0\\
\hline
0&\cJ'_{\epsilon}
\end{array}\right]\cV_{\epsilon}^{-1},
\end{equation}
from which we obtain  $\rho(\cA'-\cP_{\ccu})=\rho(\cJ'_{\epsilon})\overset{\eqref{eq: equation to bound rho J prime}}\leq  (1-\gamma)+\gamma\rho(\cJ_{\epsilon})$ where $\rho(\cJ_{\epsilon})\in(0,1)$ and $\gamma\in(0,1]$. Thus, the larger the \emph{mixing} parameter $\gamma$ is, the smaller  $\rho(\cA'-\cP_{\ccu})$ tends to be, and consequently, the faster the convergence of $(\cA')^{i}$ to $\cP_{\ccu}$ will be.  Now, returning to the stability condition~\eqref{eq: mixing parameter condition theorem}, besides requiring $\gamma\in(0,1]$, the {mixing} parameter $\gamma$ must be chosen smaller than a value that is inversely proportional to $\beta^2_{q,\max}$. Thus, the larger $\beta^2_{q,\max}$ is, the tighter the upper bound in~\eqref{eq: mixing parameter condition theorem} is, and the smaller $\gamma$ should be. If we consider for instance the QSGD quantizer from Table~\ref{table: examples of quantizers}, it can be observed that the smaller the number of levels $s$ is, i.e., the smaller the number of used bits is,  the larger $\beta^2_{q,k}$ is. Consequently, the larger $\beta^2_{q,k}$ is, the farther the agents (seeking to converge to $\text{Range}(\cU)$) can get from the subspace due to the quantization noise, and thus, the mixing parameter $\gamma$ must be chosen small enough to ensure that the  quantization noise  perturbations will not affect the  network stability. }

\section{Simulation results}
\label{sec: simulation results}
\begin{figure*}
\begin{center}
\includegraphics[scale=0.28]{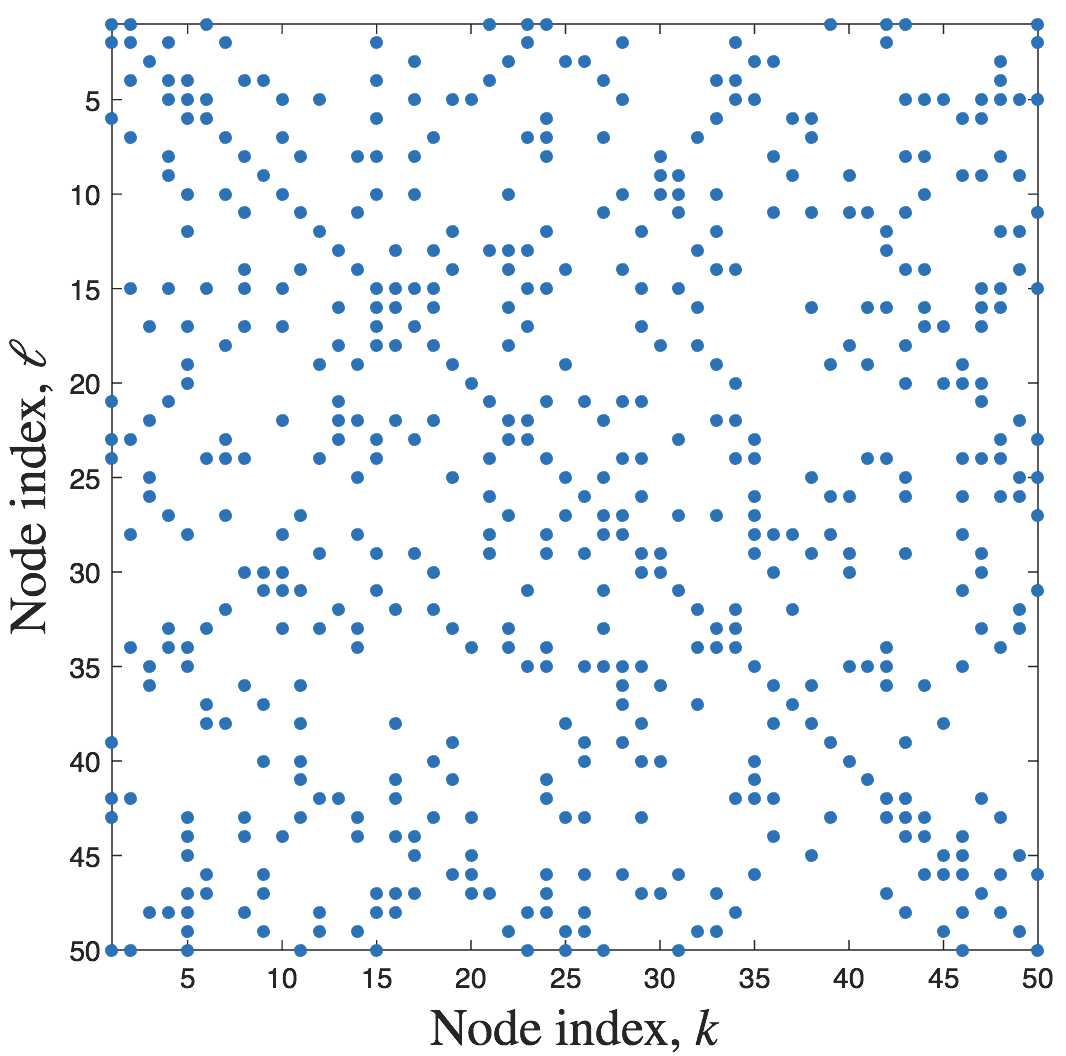}\qquad
\includegraphics[scale=0.37]{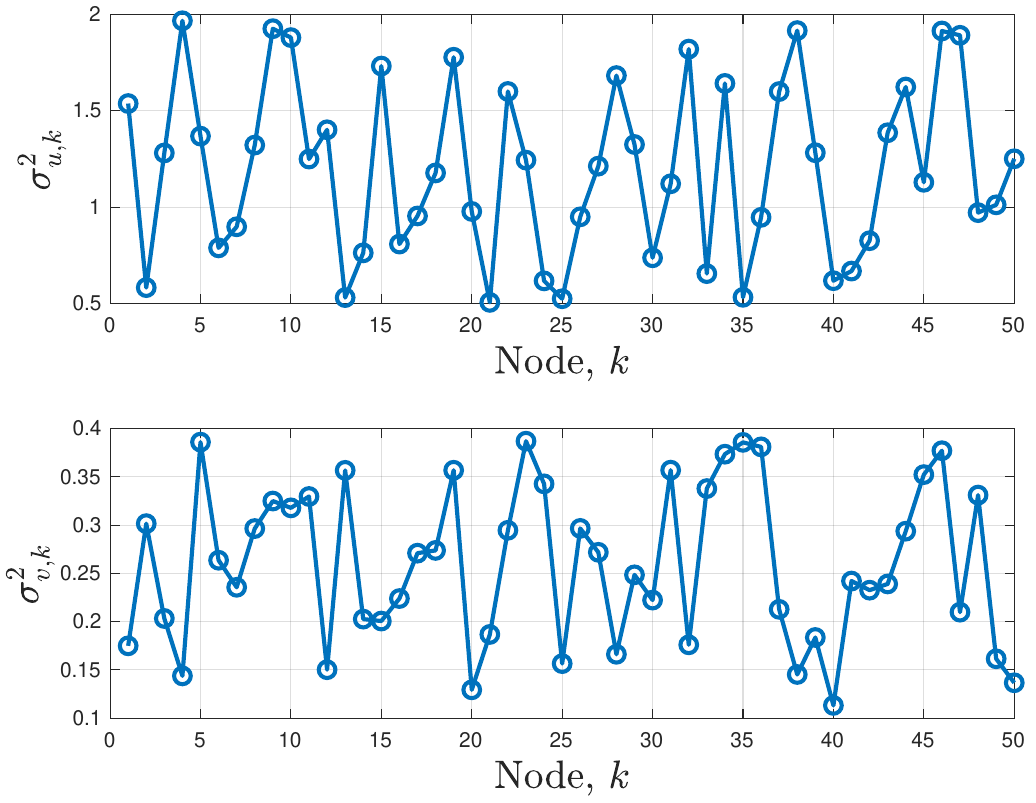}
\caption{Experimental setup. \textit{(Left)} Link matrix ($N=50$ nodes). \textit{(Right)} Regressor and noise variances.
}
\label{fig: data settings}
\end{center}
\end{figure*}

We apply strategy~\eqref{eq: decentralized learning approach with quantization} to a  network of $N=50$ nodes with the link matrix shown in Fig.~\ref{fig: data settings} {\emph{(left)}}.  Each agent is subjected to streaming data $\{\bd_k(i),\bu_{k,i}\}$ assumed to satisfy a linear regression model of the form~\cite{sayed2014adaptation}:
\begin{equation}
\label{eq: linear data regression model}
\bd_k(i)=\bu_{k,i}^\top w^\star_k+\bv_k(i),
\end{equation}
for some unknown $L\times 1$ vector $w^\star_k$ with $\bv_k(i)$ denoting a zero-mean measurement noise and $L=5$. A mean-square-error cost of the form $J_k(w_k)=\frac{1}{2}\expec|\bd_k(i)-\bu_{k,i}^\top w_k|^2$ is associated with each agent $k$. The regressor and noise processes $\{\bu_{k,i},\bv_k(i)\}$ are assumed to be zero-mean Gaussian with: i) $\expec\bu_{k,i}\bu_{\ell,i}^\top=R_{u,k}=\sigma^2_{u,k}I_L$ if $k=\ell$ and zero otherwise; ii) $\expec\bv_{k}(i)\bv_{\ell}(i)=\sigma^2_{v,k}$ if $k=\ell$ and zero otherwise; and iii) $\bu_{k,i}$ and $\bv_{k}(i)$ are independent of each other. The variances $\sigma^2_{u,k}$ and $\sigma^2_{v,k}$ are illustrated in Fig.~\ref{fig: data settings} {\textit{(right)}}. The signal $\cw^\star=\col\{w^\star_1,\ldots,w^\star_N\}$ is generated  by smoothing a signal $\cw_o$, which is randomly generated from the Gaussian distribution  $\cN(0.4\times\mathds{1}_{NL},I_{NL})$,  by a graph diffusion kernel with $\tau=3$ -- see~\cite[Sec. IV]{nassif2020adaptation2}  for more details  on the smoothing process. The matrix $\cU$ is generated according to $\cU=U\otimes I_L$   where $U=[u_1~u_2]$, and $u_1$ and $u_2$ are  the first two eigenvectors of the graph Laplacian  $\mathscr{L}$. The Laplacian matrix is generated according to $\mathscr{L}=\diag\{C\mathds{1}_N\}-C$ where $C$ is the $N\times N$  weighted adjacency matrix chosen such that the $(k,\ell)-$th entry $[C]_{k\ell}=0.1$ if $\ell\in\cN_k$ and $0$ otherwise. The combination matrix $\cA$ satisfying the conditions in~\eqref{eq: condition A} and having the same structure as the graph   is found by following the same approach as in~\cite{nassif2019distributed}.

\subsection{Effect of the small  step-size parameter}
\label{subsec: Effect of the small  step-size parameter}
\begin{figure*}
\begin{center}
\includegraphics[scale=0.32]{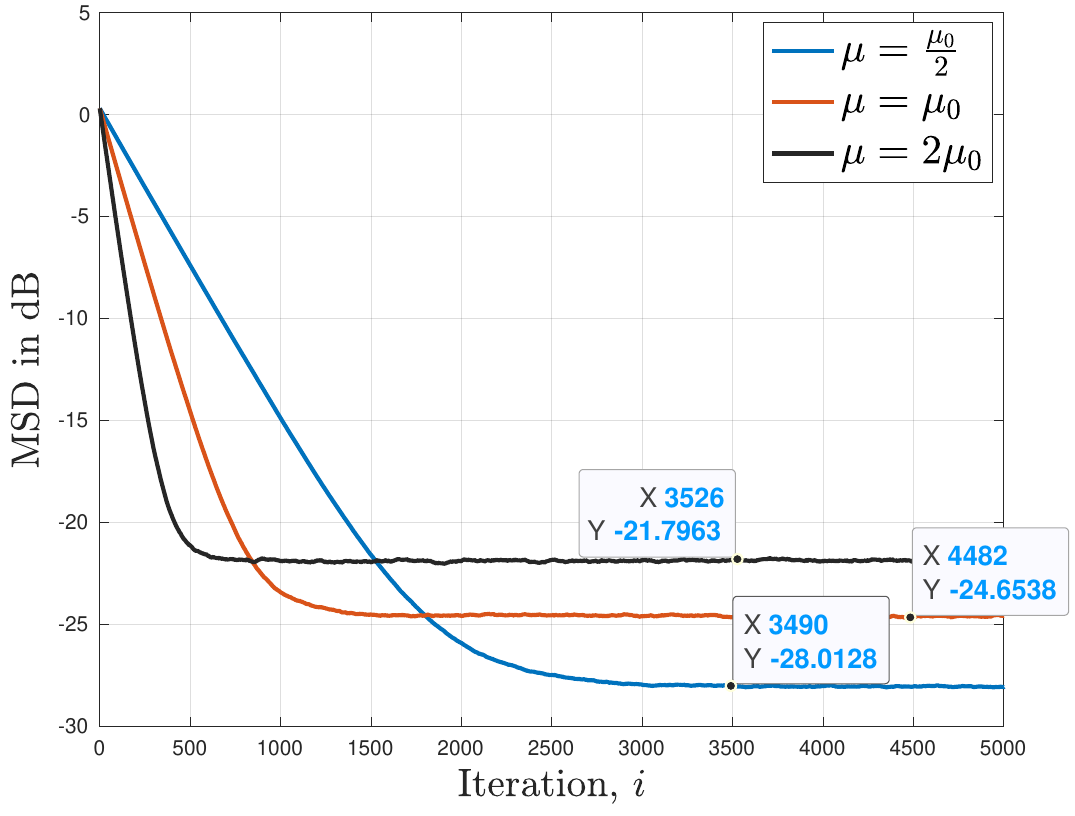}\quad
\includegraphics[scale=0.32]{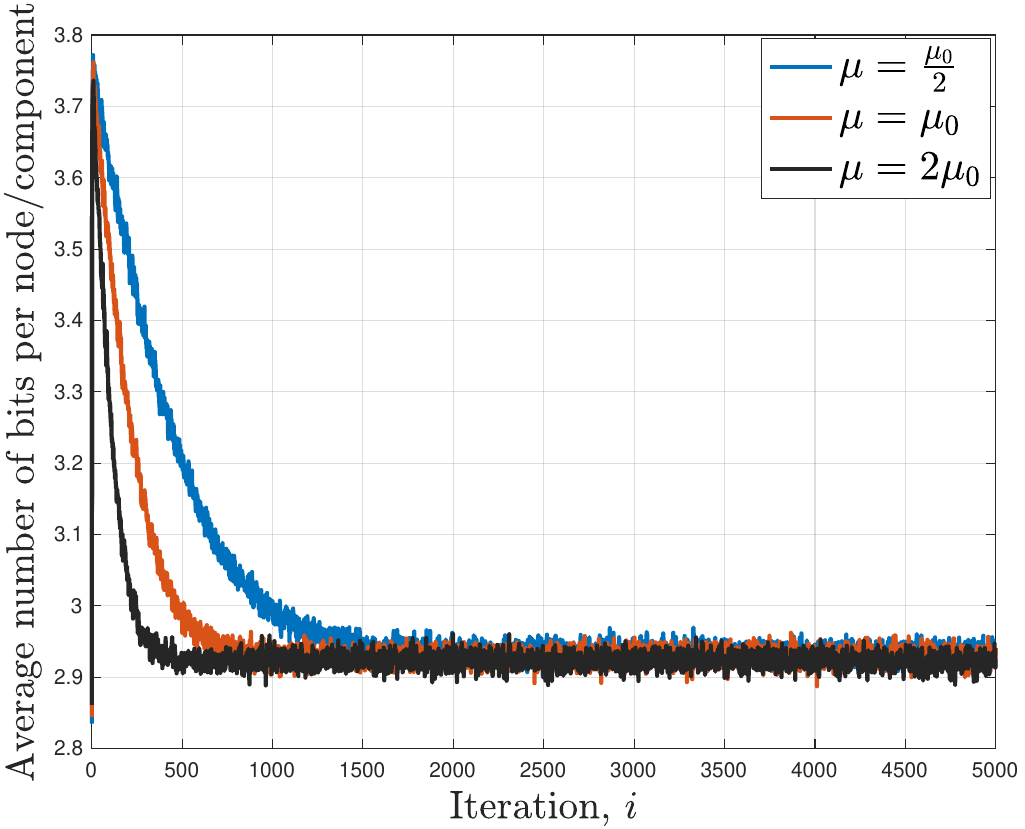}
\includegraphics[scale=0.32]{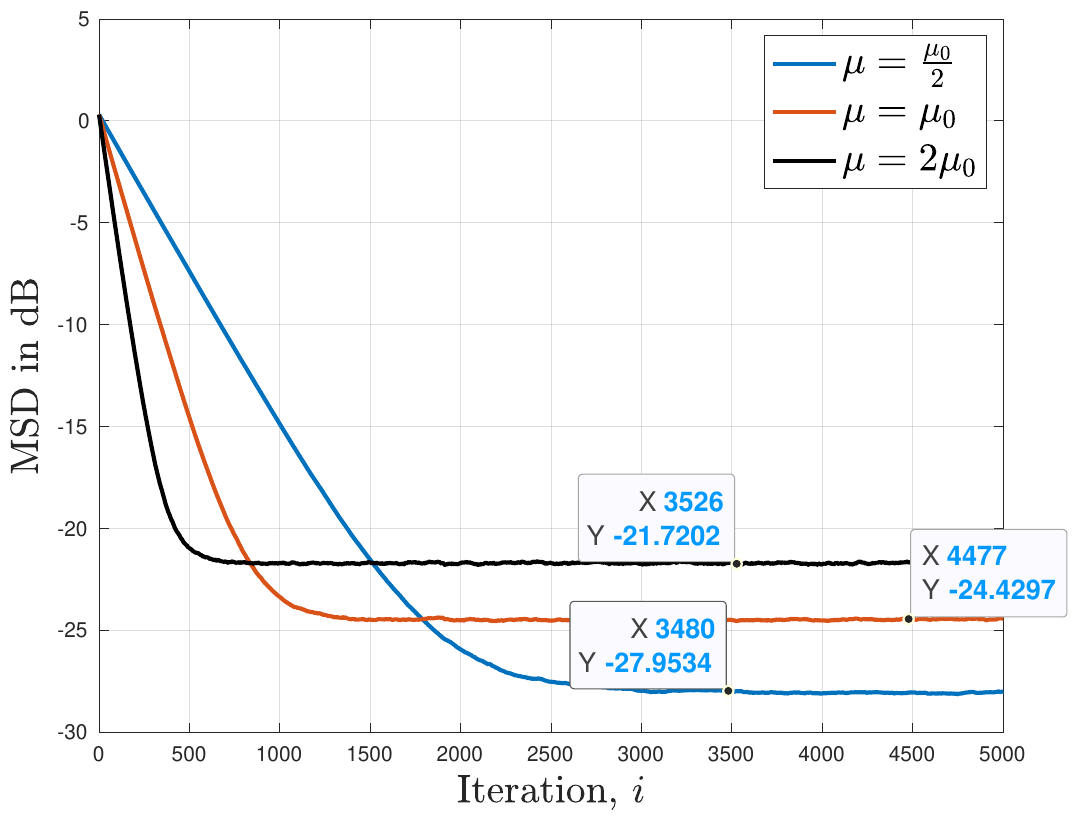}
\caption{Network performance w.r.t. $\cw^o$ in~\eqref{eq: network constrained problem} for three different values of the step-size ($\mu_0=0.003$). {In the {left} and {middle} plots, the probabilistic ANQ quantizer of Example 2 is employed.} \textit{(Left)}  {Evolution of the MSD learning curves. \textit{(Middle)}} Evolution of the average number of bits per node, per component, when  the variable-rate scheme described in {Sec.~\ref{subsec: Uniform and non-uniform randomized quantizers}} is used {to encode the difference $\bchi_{k,i}=\bpsi_{k,i}-\bphi_{k,i-1}$ in~\eqref{eq: stepb}}. {\textit{(Right)}  Evolution of the MSD learning curves when the QSGD quantizer (with $s=2$) of Table~\ref{table: examples of quantizers} is used.}
}
\label{fig: variable step-size}
\end{center}
\end{figure*}
 In Fig.~\ref{fig: variable step-size} {\emph{(left)}}, we report the network mean-square-deviation (MSD) learning {curves: 
\begin{equation}
 \label{eq: MSD learning curve}
 \text{MSD}(i)=\frac{1}{N}\sum_{k=1}^N\expec\|w^o_k-\bw_{k,i}\|^2
 \end{equation}
 for~3} different values of the step-size $\mu$. The results were averaged over 100 Monte-Carlo runs. We used the probabilistic ANQ quantizer of Example 2 with $\omega_k=0.25$ and $\eta_k=\frac{\mu}{\sqrt{2L}}$. This choice ensures that the quantizer settings of Theorem~\ref{theorem: rate stability} are satisfied. 
We set $\gamma=0.88$. We observe that, in steady-state, the network MSD increases by approximately $3$dB when $\mu$ goes from $\mu_0$ to $2\mu_0$. This means that the performance is on the order of $\mu$, as expected from the discussion in Sec.~\ref{sec: Theorems insights and observations}
since in the simulations  the  absolute noise component is  such that  $\sigma^2_{q,k}=\mu^{2}$. In Fig.~\ref{fig: variable step-size}  {\emph{(middle)}}, we report the average number of bits per node, per component,  computed {according to:
\begin{equation}
\label{eq: average number of bits per node}
R(i)=\frac{1}{N}\sum_{k=1}^N\frac{1}{L}r_{k,i},
\end{equation} 
where} $r_{k,i}$ is {the bit rate given by~\eqref{eq: bit rate general at agent k}, which is associated with the encoding of the difference vector $\bchi_{k,i}=\bpsi_{k,i}-\bphi_{k,i-1}$  transmitted by agent $k$ at iteration $i$ according to~\eqref{eq: stepb}}.
As it can be observed, and thanks to the variable-rate quantization, a finite average number of bits is guaranteed (approximately 2.9 bits/component/iteration are required {on average} in steady-state).  Moreover, it can be observed that the average number of bits scales coherently with the distortion scaling law induced by $\mu$. That is, a smaller MSD in steady-state would require using a larger number of bits, and vice versa.

{For comparison purposes, we report in Fig.~\ref{fig: variable step-size} \emph{(right)} the MSD learning curves when the QSGD quantizer of Table~\ref{table: examples of quantizers} (row 7) is employed instead of the probabilistic ANQ. Apart from the quantizer scheme, the same settings as above were assumed. For the QSGD scheme, we set the number of quantization levels $s$ to $2$. As it can be observed, this choice allows us to compare the average number of bits for the ANQ and QSGD quantizers for similar values of steady-state MSD. From Table~\ref{table: examples of quantizers} (row 7, column 5), the bit-budget required to encode a $5\times 1$ vector using the QSGD scheme ($s=2$) is given by $B_{\text{HP}}+10$. Now, by replacing $B_{\text{HP}}$ by 32 (since we are performing the experiments on MATLAB 2022a which uses 32 bits to represent a floating number in single-precision), we find that the QSGD quantizer requires, at each iteration $i$, an average number of bits per node, per component, equal to $\frac{42}{5}=8.4$, which is almost three times higher than the one obtained in steady-state when the probabilistic ANQ is used. This is expected since the QSGD scheme requires encoding the norm of the input vector with very high precision.} 

\cblue{Note that, since the results of Theorems~\ref{theorem: Network mean-square-error stability} and~\ref{theorem: rate stability}  hold for any $M\times P$ full-column rank semi-unitary matrix $\cU$, similar observations will hold true when applying the quantized approach~\eqref{eq: decentralized learning approach with quantization} to solve general constrained optimization problems of the form~\eqref{eq: network constrained problem}. In the supplementary material, we illustrate this fact by considering a simulation similar to the one considered in the current section~\ref{subsec: Effect of the small  step-size parameter}, but instead we consider solving consensus optimization~\eqref{eq: consensus optimization} by choosing $\cU=\frac{1}{\sqrt{N}}(\mathds{1}_N\otimes I_L)$.} 

\subsection{Rate-distortion curves}
\begin{figure}
\begin{center}
\includegraphics[scale=0.2]{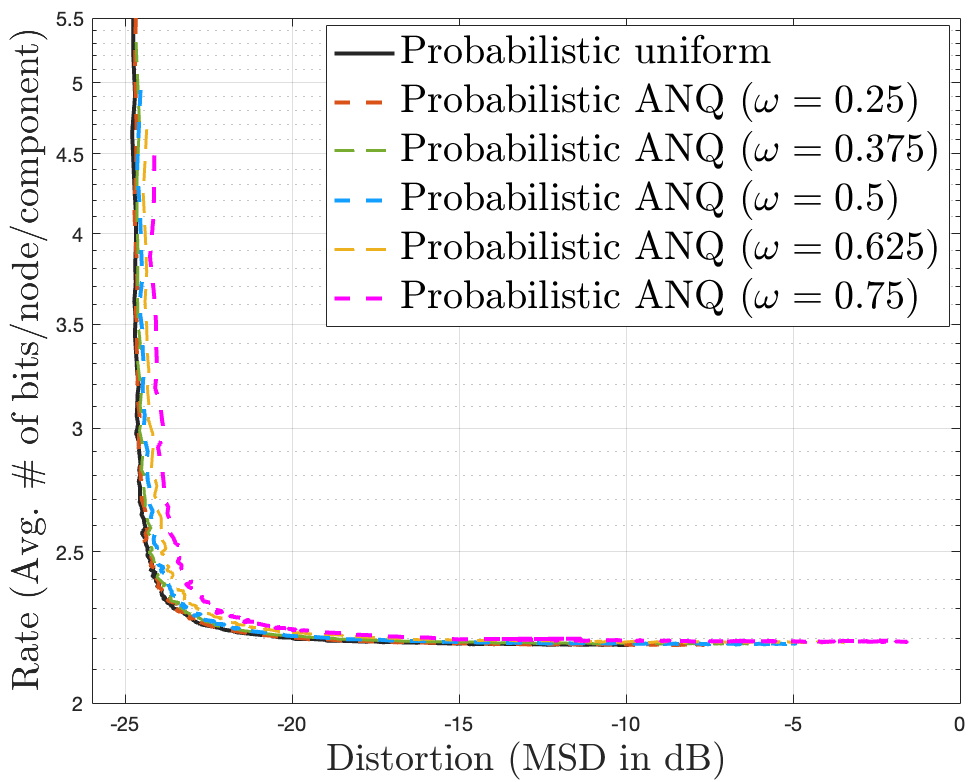}
\caption{\emph{(Left)} Rate-distortion curves for the probabilistic uniform and ANQ.
}
\label{fig: rate-distortion curves}
\end{center}
\end{figure}
In order to examine the performance of the proposed learning strategy, it is necessary to consider both the \emph{attained learning error} (MSD) and the associated \emph{bit expense}. This gives rise to a \emph{rate-distortion} curve, where the rate quantifies the bit budget and the MSD quantifies the distortion. In Fig.~\ref{fig: rate-distortion curves}, we illustrate the \emph{rate-distortion} curves for probabilistic uniform  and ANQ (for different values of the parameter $\omega$). 
We set $\mu=0.003$ and $\gamma=0.88$.  For the probabilistic uniform quantizer, each point {of} the rate/distortion {curve} corresponds to {one} value of the quantization step $\Delta$. In the example, {we selected} $200$ values of $\Delta$, uniformly sampled in the interval $[0,0.1]$. 
For each value of $\Delta$ {(i.e., each point of the curve)}, the resulting MSD (distortion) and average number of bits/node/component (rate)  were {obtained by averaging the instantaneous mean-square-deviation MSD$(i)$ in~\eqref{eq: MSD learning curve} and averaging the  number of bits $R(i)$ in~\eqref{eq: average number of bits per node} over 500 samples after convergence of the algorithm (the expectations in~\eqref{eq: MSD learning curve} and~\eqref{eq: bit rate general at agent k} are estimated empirically over $50$ Monte Carlo runs).} 
 For the probabilistic ANQ scheme, we draw three curves corresponding to different values of the parameter $\omega$. For each curve, the different rate/distortion pairs correspond to $200$ values of the parameter $\eta$, uniformly sampled in the interval $[0,0.5]$. The trade-off between rate and distortion can be observed from Fig.~\ref{fig: rate-distortion curves}, namely, as the rate decreases, the distortion increases, and vice versa. For the ANQ, we further observe that the curves move away from the uniform case as $\omega$ increases, indicating a superior performance of the uniform probabilistic quantizer. In other words, the simulations show that in the considered example no advantage is obtained by employing non-uniform quantization. 
{One useful interpretation for this behavior is as follows. In the theory of quantization, non-uniform quantizers are typically employed in a fixed-rate context, where the number of bits is determined in advance and does not depend on the input value. Under this setting, allocating high-resolution quantization intervals where the random variable is more likely to be observed provides some advantage in terms of distortion, given the available \emph{fixed} rate. However, in our case, we are considering a variable-rate quantizer that changes the number of bits depending on the input value.  {For example, when the quantizer input $x$ belongs to a narrower quantization interval (i.e., smaller distortion), the variable-rate scheme allocates a higher number of bits. This  somehow nullifies  the distortion gain  since we are allocating more bits.} Therefore, allocating non-uniform intervals and then adapting the \emph{variable} rate does not seem rewarding when compared to a uniform quantizer. 
{This bears some similarity to what happens in the theory of quantization when one employs a fixed-rate quantizer followed by an entropy encoder. In this case, the possible advantages of non-uniform quantizers are nullified by the entropy encoder, and it is well known that (in the high resolution regime) the uniform quantizer is the best choice~\cite{gish1968asymptotically}.}} 

\subsection{\cblue{Tracking ability}}
\label{subsec: tracking ability}
\begin{figure*}
\begin{center}
\includegraphics[scale=0.33]{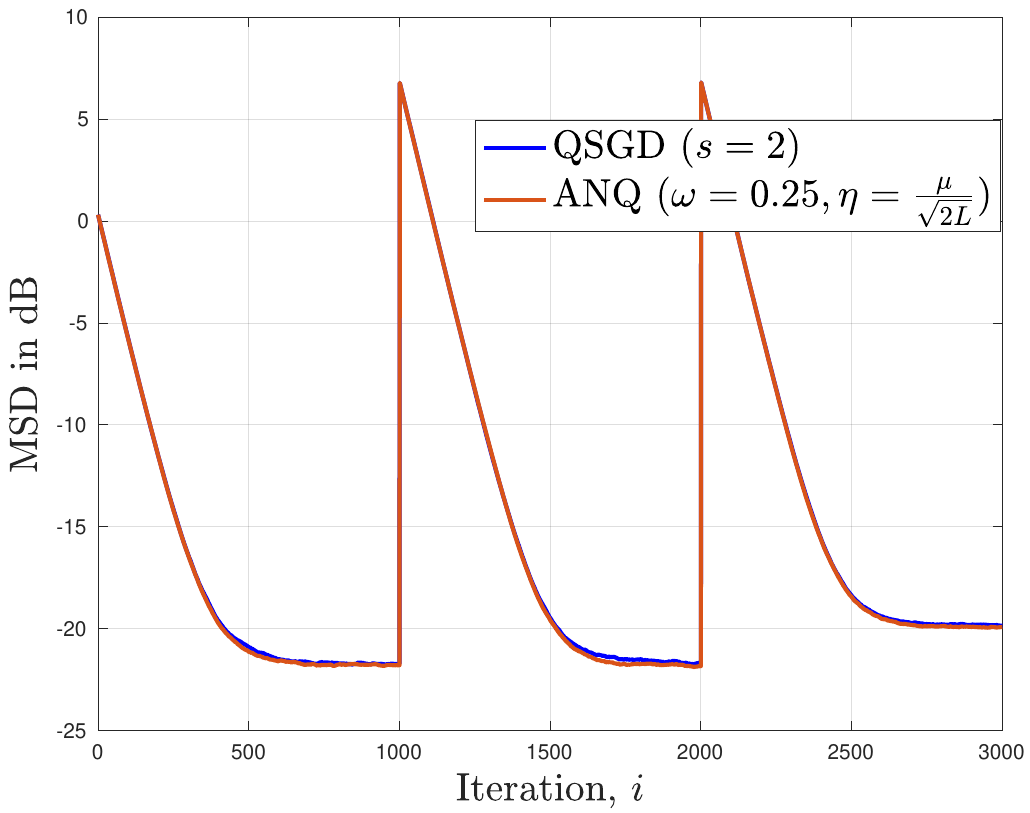}\qquad
\includegraphics[scale=0.33]{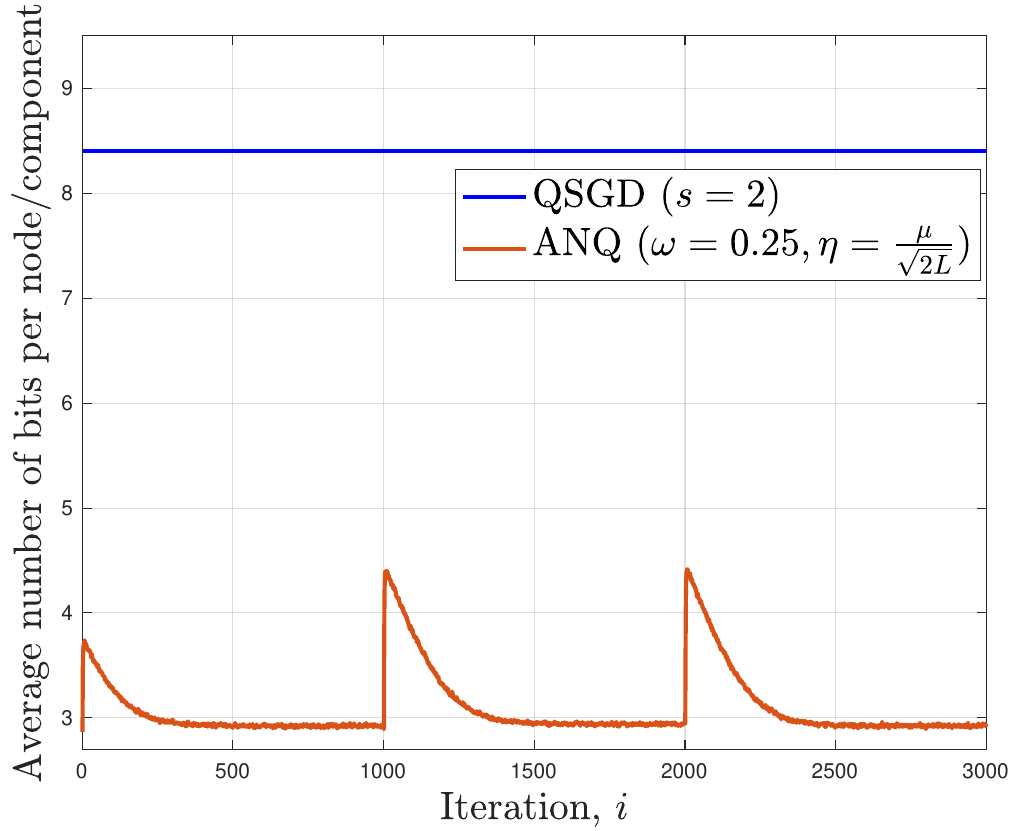}
\caption{\cblue{Network performance in non-stationary environment (i.e., the solution $\cw^o$ in~\eqref{eq: network constrained problem} is changing over time due to changes in the individual costs $\{J_k(w_k)\}$) when the probabilistic ANQ quantizer of Example 2 and  the QSGD quantizer of Table~\ref{table: examples of quantizers} are used. \textit{(Left)}  {Evolution of the MSD learning curves. \textit{(Right)}} Evolution of the average number of bits per node, per component. The variable-rate scheme  described in {Sec.~\ref{subsec: Uniform and non-uniform randomized quantizers}}  is used in the probabilistic ANQ  case.}
}
\label{fig: tracking ability}
\end{center}
\end{figure*}

\cblue{To illustrate the tracking ability of the quantized approach~\eqref{eq: decentralized learning approach with quantization}, we  modify $w^\star_k$ in~\eqref{eq: linear data regression model} at time instants $1000$ and $2000$ by modifying $\cw_o$ such that $\cw_o$ is randomly generated from $\cN(1.4\times\mathds{1}_{NL},I_{NL})$ at $i=1000$ and from $\cN(2.4\times\mathds{1}_{NL},I_{NL})$ at $i=2000$. Apart from varying $\cw_o$ (and, consequently, $\cw^o$ in~\eqref{eq: network constrained problem}) and from fixing the  step-size  $\mu$ to $0.006$, the same settings as in Sec.~\ref{subsec: Effect of the small  step-size parameter} were assumed. The resulting MSD learning curves and average number of bits/node/component curves are reported in Fig.~\ref{fig: tracking ability} \emph{(left)} and  \emph{(right)}, respectively, for both the probabilistic ANQ and the QSGD quantizers. As it can be observed,  approach~\eqref{eq: decentralized learning approach with quantization} is able to track  changes in the solution of the constrained optimization problem~\eqref{eq: network constrained problem} despite quantization. }
\section{Conclusion}
In this work, we considered inference problems over networks where agents have individual parameter vectors to estimate subject to subspace constraints that require the parameters across the network to lie in low-dimensional subspaces. The constrained subspace problem includes standard consensus optimization  as a special case, and allows for more general task relatedness models such as multitask smoothness. To alleviate the communication bottleneck resulting from the exchange of the intermediate estimates among agents over many iterations, we proposed a new decentralized strategy relying on \emph{differential randomized quantizers}. We studied its mean-square-error stability in the presence of both  \emph{relative} and \emph{absolute} quantization noise terms.  The analysis framework is general enough to cover many typical examples of probabilistic quantizers such as QSGD quantizer, gradient sparsifier, uniform or dithered quantizer, and non-uniform compandor. We showed that, for small step-sizes, and under some conditions on the quantizers (captured by the terms $\beta^2_{q,k},\sigma^2_{q,k}$)\cblue{, on the network topology (captured by the eigendecomposition of the combination matrix $\cA$), and on the mixing parameter~$\gamma$}, the decentralized quantized approach is able to converge in the mean-square-error sense within $O(\mu)$ from the solution of the constrained optimization problem, despite the gradient and quantization~noises.

\begin{appendices}
\section{Mean-square-error analysis}
\label{app: Mean-square-error analysis}
We consider the transformed iterates $\bphib_{i}$ and $\bphic_{i}$ in~\eqref{eq: evolution of the centralized recursion} and~\eqref{eq: evolution of the centralized recursion 2}, respectively. 
Computing the second-order moment {of} both sides of~\eqref{eq: evolution of the centralized recursion}, we get:
\begin{equation}
\begin{split}
\expec\|\bphib_i\|^2=&\expec\|(I_P-\mu\bcD_{11,i-1})\bphib_{i-1}-\mu\bcD_{12,i-1}\bphic_{i-1}\|^2\\
&+\mu^2\expec\|\bsb_{i}\|^2+\expec\|\bzb_{i}\|^2,\label{eq: centralized error recursion}
\end{split}
\end{equation}
where,  from Assumptions~\ref{assump: quantization noise}  and~\ref{assump: gradient noise} on the quantization and gradient noise processes, we used the fact that:
\begin{align}
\expec[\bx_{i-1}^{\top}\bzb_{i}]&=\expec\left[\expec[\bx_{i-1}^{\top}\bzb_{i}|\boldh_i]\right]=\expec\left[\bx_{i-1}^{\top}\expec[\bzb_{i}|\boldh_i]\right]=0\\
\expec[\bx_{i-1}^{\top}\bsb_{i}]&=\expec\left[\expec\left[\bx_{i-1}^{\top}\bsb_{i}\Big|\{{\bphi_{\ell,i-1}}\}_{\ell=1}^N\right]\right]\notag\\
&=\expec\left[\bx_{i-1}^{\top}\expec\left[\bsb_{i}\Big|\{{\bphi_{\ell,i-1}}\}_{\ell=1}^N\right]\right]=0\\
\expec[\bsb_{i}^\top\bzb_{i}]&=\expec\left[\expec[\bsb_{i}^\top\bzb_{i}|\boldh_i]\right]=\expec\left[\bsb_{i}^\top\expec[\bzb_{i}|\boldh_i]\right]=0
\end{align} 
with $\bx_{i-1}=(I_P-\mu\bcD_{11,i-1})\bphib_{i-1}-\mu\bcD_{12,i-1}\bphic_{i-1}$. Using similar arguments, we can also show that:
\begin{equation}
\begin{split}
\expec\|\bphic_i\|^2=&\expec\|(\cJ'_{\epsilon}-\mu\bcD_{22,i-1})\bphic_{i-1}-\mu\bcD_{21,i-1}\bphib_{i-1}+\mu\widecheck{b}\|^2\\
&+\mu^2\expec\|\bsc_{i}\|^2+\expec\|\bzc_{i}\|^2.\label{eq: non centralized error  recursion}
\end{split}
\end{equation}

Before proceeding, we show that, for small enough $\epsilon$, the $2-$induced matrix norm of $\cJ'_{\epsilon}$ in~\eqref{eq: jordan decomposition of A'} satisfies $\|\cJ'_{\epsilon}\|\in(0,1)$. This property will be used in the subsequent analysis. To establish the property, and by following similar arguments as in~\cite[pp.~516--517]{sayed2014adaptation}, we can first show that  the block diagonal matrix $\cJ'_{\epsilon}$, which is given by~\eqref{eq: jordan decomposition of A'}, satisfies:
\begin{equation}
\label{eq: bounding square of j prime}
\|\cJ_{\epsilon}'\|^2\leq(\rho(\cJ'_{\epsilon})+\gamma\epsilon)^2.
\end{equation}
From~\eqref{eq: jordan decomposition of A'}, we can also show that: 
\begin{equation}
\label{eq: equation to bound rho J prime}
\rho(\cJ'_{\epsilon})\leq(1-\gamma)+\gamma\rho(\cJ_{\epsilon}).
\end{equation} Using the fact that $\rho(\cJ_{\epsilon})\in(0,1)$ from~\cite[Lemma~2]{nassif2020adaptation} and the fact that $\gamma\in(0,1]$, we obtain $\rho(\cJ'_{\epsilon})\in(0,1)$.  Since $\rho(\cJ'_{\epsilon})+\gamma\epsilon$ is non-negative, by replacing~\eqref{eq: equation to bound rho J prime} into~\eqref{eq: bounding square of j prime},  we obtain:
\begin{equation}
\label{eq: bound on norm of J' epsilon}
\begin{split}
\|\cJ_{\epsilon}'\|&\leq(1-\gamma)+\gamma\rho(\cJ_{\epsilon})+\gamma\epsilon\\
&=1-\gamma(1-\rho(\cJ_{\epsilon})-\epsilon).
\end{split}
\end{equation}
This identity will also be used in the  subsequent analysis. Regarding the block diagonal matrix $I-\cJ_{\epsilon}'$, which will  appear in the following analysis, we can establish (by following similar arguments as in~\cite[pp.~516--517]{sayed2014adaptation}):
\begin{equation}
\label{eq: bound on I-J epsilon'}
\|I-\cJ_{\epsilon}'\|^2\overset{\eqref{eq: jordan decomposition of A'}}=\|\gamma(I-\cJ_{\epsilon})\|^2 =\gamma^2\|I-\cJ_{\epsilon}\|^2 \leq\gamma^2(\rho(I-\cJ_{\epsilon})+\epsilon)^2,
\end{equation}
where $\rho(I-\cJ_{\epsilon})\in(0,2)$ since $\rho(\cJ_{\epsilon})\in(0,1)$.

Now, returning to the error recursions~\eqref{eq: centralized error recursion} and~\eqref{eq: non centralized error  recursion}, and using similar arguments as those used to  establish inequalities (119) and (124) in~\cite[Appendix D]{nassif2020adaptation}, we can show that:
\begin{equation}
\label{eq: centralized error vector recursion inequality}
\begin{split}
\expec\|\bphib_i\|^2\leq&(1-\mu\sigma_{11})\expec\|\bphib_{i-1}\|^2+\frac{\mu\sigma_{12}^2}{\sigma_{11}}\expec\|\bphic_{i-1}\|^2+\\
&\expec\|\bzb_i\|^2+\mu^2\expec\|\bsb_{i}\|^2,
\end{split}
\end{equation}
and 
\begin{equation}
\label{eq: centralized error vector recursion inequality 2}
\begin{split}
&\expec\|\bphic_i\|^2\leq\left(\|\cJ'_{\epsilon}\|+\frac{3\mu^2\sigma_{22}^2}{1-\|\cJ'_{\epsilon}\|}\right)\expec\|\bphic_{i-1}\|^2+\\
&\left(\hspace{-0.5mm}\frac{3\mu^2\sigma_{21}^2}{1-\|\cJ'_{\epsilon}\|}\hspace{-0.5mm}\right)\hspace{-1mm}\expec\|\bphib_{i-1}\|^2\hspace{-0.5mm}+\hspace{-0.5mm}\left(\hspace{-0.5mm}\frac{3\mu^2}{1-\|\cJ'_{\epsilon}\|}\hspace{-0.5mm}\right)\hspace{-0.5mm}\|\widecheck{b}\|^2\hspace{-1mm}+\hspace{-0.5mm}\expec\|\bzc_{i}\|^2\hspace{-1mm}+\hspace{-0.5mm}\mu^2\expec\|\bsc_{i}\|^2
\end{split}
\end{equation}
for some positive constant $\sigma_{11}$ and non-negative constants $\sigma_{12},\sigma_{21}$, and $\sigma_{22}$ independent of $\mu$. As it can be seen from~\eqref{eq: bias transformed vector}, $\widecheck{b}=\cV_{L,\epsilon}^\top b$ depends on $b$ in~\eqref{eq: equation for b}, which is defined in terms of the gradients $\{\nabla_{w_k}J_k(w^o_k)\}$.  Since the costs $J_k(w_k)$ are twice differentiable, then $\|b\|^2$ is bounded and we obtain $\|\widecheck{b}\|^2=O(1)$.

For the gradient noise terms, by following similar arguments as in~\cite[Chapter 9]{sayed2014adaptation},~\cite[Appendix D]{nassif2020adaptation} and by using Assumption~\ref{assump: gradient noise}, we can show that:
\begin{equation}
\label{eq: gradient noise term bound 0}
\expec\|\bsb_i\|^2+\expec\|\bsc_i\|^2=\expec\|\cV_{\epsilon}^{-1}\bs_i\|^2\leq v_1^2\beta_{s,\max}^2 \expec\|\bcwt_{i-1}\|^2+ v_1^2\overline{\sigma}^2_s,
\end{equation}
where $v_1=\|\cV_{\epsilon}^{-1}\|$, $\beta_{s,\max}^2=\max_{1\leq k\leq N}\beta_{s,k}^2$, and $\overline{\sigma}^2_s=\sum_{k=1}^N\sigma^2_{s,k}$. 
 Using expression~\eqref{eq: wt in terms of phi} and the Jordan decomposition of the matrix $\cA'$ in~\eqref{eq: equation for A'}, we obtain:
\begin{equation}
\begin{split}
&\expec\|\bsb_i\|^2+\expec\|\bsc_i\|^2\\&\leq v_1^2\beta_{s,\max}^2 \expec\|\cA'\bphit_{i-1}\|^2+ v_1^2\overline{\sigma}^2_s\\
&\leq v_1^2\beta_{s,\max}^2 \expec\|\cV_{\epsilon}\Lambda'(\cV_{\epsilon}^{-1}\bphit_{i-1})\|^2+ v_1^2\overline{\sigma}^2_s\\
&\leq v_1^2\beta_{s,\max}^2 v_2^2(\expec\|\bphib_{i-1}\|^2+\expec\|\bphic_{i-1}\|^2)+ v_1^2\overline{\sigma}^2_s,\label{eq: gradient noise term bound}
\end{split}
\end{equation}
where $v_2=\|\cV_{\epsilon}\|$. 

Using the bound~\eqref{eq: gradient noise term bound} into~\eqref{eq: centralized error vector recursion inequality} and~\eqref{eq: centralized error vector recursion inequality 2}, we obtain:
\begin{equation}
\label{eq: centralized error vector recursion inequality with bounding gradient noise}
\begin{split}
&\expec\|\bphib_i\|^2\leq(1-\mu\sigma_{11}+\mu^2 v_1^2\beta_{s,\max}^2 v_2^2)\expec\|\bphib_{i-1}\|^2+\\
&\left(\frac{\mu\sigma_{12}^2}{\sigma_{11}}+\mu^2 v_1^2\beta_{s,\max}^2 v_2^2\right)\expec\|\bphic_{i-1}\|^2+\expec\|\bzb_i\|^2+\mu^2 v_1^2\overline{\sigma}^2_s,
\end{split}
\end{equation}
and
\begin{equation}
\label{eq: centralized error vector recursion inequality 2 with bounding gradient noise}
\begin{split}
&\expec\|\bphic_i\|^2\leq\left(\|\cJ'_{\epsilon}\|+\frac{3\mu^2\sigma_{22}^2}{1-\|\cJ'_{\epsilon}\|}+\mu^2 v_1^2\beta_{s,\max}^2 v_2^2\right)\expec\|\bphic_{i-1}\|^2+\\
&\left(\frac{3\mu^2\sigma_{21}^2}{1-\|\cJ'_{\epsilon}\|}+\mu^2 v_1^2\beta_{s,\max}^2 v_2^2\right)\expec\|\bphib_{i-1}\|^2+\left(\frac{3\mu^2}{1-\|\cJ'_{\epsilon}\|}\right)\|\widecheck{b}\|^2\\
&\quad+\expec\|\bzc_{i}\|^2+\mu^2 v_1^2\overline{\sigma}^2_s.
\end{split}
\end{equation}

Now, for the quantization noise, we have:
\begin{equation}
\label{eq: expression 1}
\begin{split}
\expec\|\bzb_i\|^2+\expec\|\bzc_i\|^2=\expec\|\cV_{\epsilon}^{-1}\bz_i\|^2\overset{\eqref{eq: equation for z}}\leq v_1^2\left(\sum_{k=1}^N\expec\|\bz_{k,i}\|^2\right).
\end{split}
\end{equation}
From~\eqref{eq: quantization error vector} and Assumption~\ref{assump: quantization noise}, and since $\bchi_{k,i}=\bphit_{k,i-1}-\bpsit_{k,i}$, we can write:
\begin{equation}
\label{eq: expression 3}
\begin{split}
\expec\|\bz_{k,i}\|^2\leq{\beta}^2_{q,k}\expec\|\bphit_{k,i-1}-\bpsit_{k,i}\|^2+{\sigma}^2_{q,k},
\end{split}
\end{equation}
and, therefore,
\begin{equation}
\label{eq: expression 4}
\begin{split}
\expec\|\bz_i\|^2\leq {\beta}_{q,\max}^2\expec\|\bphit_{i-1}-\bpsit_{i}\|^2+\overline{\sigma}^2_q,
\end{split}
\end{equation}
where ${\beta}_{q,\max}^2=\max_{1\leq k\leq N}\{{\beta}^2_{q,k}\}$ and $\overline{\sigma}^2_q=\sum_{k=1}^N{\sigma}^2_{q,k}$. Since the analysis is facilitated by transforming the network vectors into the Jordan decomposition basis of the matrix $\cA'$, we proceed by noting that the term $\expec\|\bz_i\|^2$ can be bounded as follows:
\begin{equation}
\label{eq: expression 6}
\begin{split}
\expec\|\bz_i\|^2&\overset{\eqref{eq: expression 4}}\leq  {\beta}_{q,\max}^2\expec\|\cV_{\epsilon}\cV_{\epsilon}^{-1}(\bphit_{i-1}-\bpsit_{i})\|^2+ \overline{\sigma}^2_q\\
&~\leq {\beta}_{q,\max}^2\|\cV_{\epsilon}\|^2\expec\|\cV_{\epsilon}^{-1}(\bphit_{i-1}-\bpsit_{i})\|^2+ \overline{\sigma}^2_q\\
&~\leq v_2^2 {\beta}_{q,\max}^2[\expec\|\overline{\bchi}_{i}\|^2+\expec\|\widecheck{\bchi}_i\|^2]+\overline{\sigma}^2_q,
\end{split}
\end{equation}
where
\begin{align}
\overline{\bchi}_{i}&\triangleq\cU^{\top}(\bphit_{i-1}-\bpsit_{i}),\\
\widecheck{\bchi}_i&\triangleq\cV_{L,\epsilon}^{\top}(\bphit_{i-1}-\bpsit_{i}).
\end{align}
Therefore, by combining~\eqref{eq: expression 1} and~\eqref{eq: expression 6}, we obtain:
\begin{equation}
\label{eq: expression 10}
\begin{split}
\expec\|\bzb_i\|^2+\expec\|\bzc_i\|^2\leq v_1^2v_2^2{\beta}_{q,\max}^2[\expec\|\overline{\bchi}_{i}\|^2+\expec\|\widecheck{\bchi}_i\|^2]+v_1^2\overline{\sigma}^2_q.
\end{split}
\end{equation}
We focus now on deriving the network vector transformed recursions $\overline{\bchi}_{i}$ and $\widecheck{\bchi}_i$. Subtracting $\bphit_{i-1}$ from both sides of~\eqref{eq: network error vector psi} and using~\eqref{eq: wt in terms of phi}, we obtain:
\begin{equation}
\label{eq: expression 5}
\begin{split}
\bphit_{i-1}-\bpsit_{i}=(I_{M}-\cA'+\mu\bcH_{i-1}\cA')\bphit_{i-1}+\mu\bs_i-\mu b.
\end{split}
\end{equation}
By multiplying both sides of~\eqref{eq: expression 5} by $\cV_{\epsilon}^{-1}$ and by using~\eqref{eq: transformed variable phi definition}--\eqref{eq: bias transformed vector},~\eqref{eq: definition of D11}--\eqref{eq: definition of D22}, and the Jordan decomposition of the matrix $\cA'$, we obtain:
\begin{equation}
\label{eq: expression 8}
\begin{split}
\left[\begin{array}{c}
\overline{\bchi}_{i}\\
\widecheck{\bchi}_i
\end{array}
\right]=&\left[\begin{array}{cc}
\mu\bcD_{11,i-1}&\mu\bcD_{12,i-1}\\
\mu\bcD_{21,i-1}&I_{M-P}-\cJ'_{\epsilon}+\mu\bcD_{22,i-1}
\end{array}\right]\left[\begin{array}{c}
\bphib_{i-1}\\
\bphic_{i-1}
\end{array}
\right]\\
&+\mu\left[\begin{array}{c}
\bsb_i\\
\bsc_{i}
\end{array}
\right]-\mu\left[\begin{array}{c}
0\\
\widecheck{b}
\end{array}
\right].
\end{split}
\end{equation}
Again, by using similar arguments as those used to  establish inequalities (119) and (124) in~\cite[Appendix D]{nassif2020adaptation} and by using   {Jensen's} inequalities $\|x+y\|^2\leq 2\|x\|^2+2\|y\|^2$ and $\|x+y+z\|^2\leq 3\|x\|^2+3\|y\|^2+3\|z\|^2$, we can {verify} that:
\begin{equation}
\label{eq: centralized error vector recursion inequality on delta}
\begin{split}
\expec\|\overline{\bchi}_{i}\|^2\leq&2\mu^2\sigma^2_{11}\expec\|\bphib_{i-1}\|^2+2\mu^2\sigma_{12}^2\expec\|\bphic_{i-1}\|^2+\mu^2\expec\|\bsb_{i}\|^2,
\end{split}
\end{equation}
and
\begin{equation}
\label{eq: centralized error vector recursion inequality on delta check}
\begin{split}
&\expec\|\widecheck{\bchi}_{i}\|^2\leq6\mu^2\sigma^2_{21}\expec\|\bphib_{i-1}\|^2+\\
&2\left(\|I-\cJ_{\epsilon}'\|^2+3\mu^2\sigma^2_{22}\right)\expec\|\bphic_{i-1}\|^2+6\mu^2\|\widecheck{b}\|^2+\mu^2\expec\|\bsc_{i}\|^2.
\end{split}
\end{equation}
By combining expressions~\eqref{eq: centralized error vector recursion inequality on delta} and~\eqref{eq: centralized error vector recursion inequality on delta check}, we obtain:
\begin{equation}
\label{eq: centralized error vector recursion inequality on sum of  delta}
\begin{split}
\expec\|\overline{\bchi}_{i}\|^2&+\expec\|\widecheck{\bchi}_{i}\|^2\leq\left(2\mu^2\sigma^2_{11}+6\mu^2\sigma^2_{21}\right)\expec\|\bphib_{i-1}\|^2+\\
&\left(2\mu^2\sigma_{12}^2+2\|I-\cJ_{\epsilon}'\|^2+6\mu^2\sigma^2_{22}\right)\expec\|\bphic_{i-1}\|^2+\\
&~6\mu^2\|\widecheck{b}\|^2+\mu^2\left(\expec\|\bsb_{i}\|^2+\expec\|\bsc_{i}\|^2\right).
\end{split}
\end{equation}
Now, by using the bound~\eqref{eq: gradient noise term bound} {in~\eqref{eq: centralized error vector recursion inequality on sum of  delta}}, we  obtain:
\begin{equation}
\label{eq: centralized error vector recursion inequality on sum of  delta 1}
\begin{split}
&\expec\|\overline{\bchi}_{i}\|^2+\expec\|\widecheck{\bchi}_{i}\|^2\leq6\mu^2\|\widecheck{b}\|^2+\mu^2 v_1^2\overline{\sigma}^2_s+\\
&\left(2\mu^2\sigma^2_{11}+6\mu^2\sigma^2_{21}+\mu^2 v_1^2\beta_{s,\max}^2 v_2^2\right)\expec\|\bphib_{i-1}\|^2+\\
&\left(2\mu^2\sigma_{12}^2+2\|I-\cJ_{\epsilon}'\|^2+6\mu^2\sigma^2_{22}+\mu^2 v_1^2\beta_{s,\max}^2 v_2^2\right)\expec\|\bphic_{i-1}\|^2.
\end{split}
\end{equation}

 Using~\eqref{eq: centralized error vector recursion inequality on sum of  delta 1} and~\eqref{eq: expression 10} {in~\eqref{eq: centralized error vector recursion inequality with bounding gradient noise}} and~\eqref{eq: centralized error vector recursion inequality 2 with bounding gradient noise}, we finally find that  the variances of $\bphib_i$ and $\bphic_i$ are coupled and recursively bounded as:
\begin{equation}
\label{eq: single inequality recursion text new}
\left[\begin{array}{c}
\expec\|\bphib_i\|^2\\
\expec\|\bphic_i\|^2
\end{array}\right]\preceq\Gamma\left[\begin{array}{c}
\expec\|\bphib_{i-1}\|^2\\
\expec\|\bphic_{i-1}\|^2
\end{array}\right]+\left[\begin{array}{c}
e+v_1^2\overline{\sigma}^2_{q}\\
f+v_1^2\overline{\sigma}^2_{q}
\end{array}\right],
\end{equation}
where $\Gamma$ is {the} $2\times 2$ matrix given by:
\begin{equation}
\Gamma=\left[\begin{array}{cc}
a&b\\
c&d
\end{array}\right],\label{eq: general form for gamma}
\end{equation}
with
\begingroup
\allowdisplaybreaks
\begin{align}
a
&=1-\mu\sigma_{11}+O(\mu^2)=\cblue{1-\Theta(\mu)},\label{eq: constant a definition new}\\
b
&=2v_1^2v_2^2{\beta}_{q,\max}^2\|I-\cJ_{\epsilon}'\|^2+O(\mu),\\
c
&=O(\mu^2),\\
d
&=\|\cJ'_{\epsilon}\|+2v_1^2v_2^2{\beta}_{q,\max}^2\|I-\cJ_{\epsilon}'\|^2+O(\mu^2),\label{eq: constant d definition new}\\
e&\triangleq \mu^2\left(6\|\widecheck{b}\|^2v_1^2v_2^2{\beta}_{q,\max}^2+ v_1^2\overline{\sigma}^2_sv_1^2v_2^2{\beta}_{q,\max}^2+ v_1^2\overline{\sigma}^2_s\right)
,\label{eq: constant e definition new}\\
f&\triangleq\mu^2\hspace{-1mm}\left(\hspace{-1mm}\frac{3\|\widecheck{b}\|^2}{1-\|\cJ'_{\epsilon}\|}+6v_1^2v_2^2{\beta}_{q,\max}^2\|\widecheck{b}\|^2+ \hspace{-0.5mm}v_1^4\overline{\sigma}^2_sv_2^2{\beta}_{q,\max}^2\hspace{-0.5mm}+ v_1^2\overline{\sigma}^2_s\hspace{-1mm}\right)
\label{eq: constant f definition new}
\end{align}
\endgroup
\cblue{where the notation $g(\mu)=\Theta(\mu)$ signifies that there exists a positive constant $c$ such that $\lim_{\mu\rightarrow 0}\frac{g(\mu)}{\mu}=c$.}

If the matrix $\Gamma$ is stable, i.e., $\rho(\Gamma)<1$, then by iterating~\eqref{eq: single inequality recursion text new}, we arrive at:
\begin{align}
\label{eq: single inequality recursion steady state 2 new}
\limsup_{i\rightarrow\infty}\left[\begin{array}{c}
\expec\|\bphib_i\|^2\\
\expec\|\bphic_i\|^2
\end{array}\right]\preceq(I-\Gamma)^{-1}\left[\begin{array}{c}
e+v_1^2\overline{\sigma}^2_{q}\\
f+v_1^2\overline{\sigma}^2_{q}
\end{array}\right].
\end{align}
As we will see in the following, for some given data settings (captured by $\{\sigma_{11}^2,\sigma_{12}^2,\sigma_{21}^2,\sigma_{22}^2,\beta^2_{s,\max},\overline{\sigma}^2_s\}$), small step-size parameter $\mu$, network topology (captured by the matrix $\cA$ and its eigendecomposition  $\{v_1^2,v_2^2,\cJ_{\epsilon}\}$), and quantizer settings (captured by $\{\beta_{q,\max}^2,\overline{\sigma}^2_{q}\}$), the stability of $\Gamma$ can be controlled by the mixing parameter $\gamma$ used in step~\eqref{eq: stepc}. As it can be observed from~\eqref{eq: jordan decomposition of A'},~\eqref{eq: bound on norm of J' epsilon}, and~\eqref{eq: bound on I-J epsilon'}, this parameter influences $\|\cJ'_{\epsilon}\|^2$ and $\|I-\cJ'_{\epsilon}\|^2$.  Generally speaking, and since the spectral radius of a matrix is upper bounded by its $1-$norm, the matrix $\Gamma$ is stable if:
\begin{equation}
\label{eq: condition ensuring stability of gamma new}
\rho(\Gamma)\leq\max\{|a|+|c|,|b|+|d|\}<1.
\end{equation}
Since $\sigma_{11}>0$ and $\gamma \neq 0$, a sufficiently small $\mu$ can make $|a|+|c|$ strictly smaller than 1. For $|b|+|d|$, observe that if the mixing parameter $\gamma$ is chosen such that:
\begin{equation}
\label{eq: condition on the mixing parameter new}
\|\cJ'_{\epsilon}\|+4v_1^2v_2^2\beta^2_{q,\max}\|I-\cJ'_{\epsilon}\|^2<1,
\end{equation}
then $|b|+|d|$ can be made strictly smaller than one for sufficiently small $\mu$. It is therefore clear that the RHS of~\eqref{eq: condition ensuring stability of gamma new} can be made strictly smaller than one for sufficiently small $\mu$ and for a mixing parameter $\gamma\neq 0$ satisfying condition~\eqref{eq: condition on the mixing parameter new}. Now, to derive condition~\eqref{eq: mixing parameter condition theorem} on the mixing parameter $\gamma$, we start by noting that the LHS of~\eqref{eq: condition on the mixing parameter new} can be upper bounded by:
\begin{equation}
\begin{split}
&\|\cJ'_{\epsilon}\|+4v_1^2v_2^2\beta^2_{q,\max}\|I-\cJ'_{\epsilon}\|^2\\
&\overset{\eqref{eq: bound on norm of J' epsilon},\eqref{eq: bound on I-J epsilon'}}\leq \hspace{-1.5mm}1-\gamma(1-\rho(\cJ_{\epsilon})-\epsilon)+4v_1^2v_2^2\beta^2_{q,\max}\gamma^2(\rho(I-\cJ_{\epsilon})+\epsilon)^2.
\end{split}
\end{equation}
The upper bound in the above inequality is guaranteed to be strictly smaller than 1 if:
\begin{equation}
\label{eq: condition on 1-zeta to solve}
4v_1^2v_2^2\beta^2_{q,\max}\gamma^2(\rho(I-\cJ_{\epsilon})+\epsilon)^2-\gamma(1-\rho(\cJ_{\epsilon})-\epsilon)<0.
\end{equation}
Now, by using the above condition and the fact that $\gamma$ must be in $(0,1]$, we  obtain condition~\eqref{eq: mixing parameter condition theorem}. 

Under condition~\eqref{eq: mixing parameter condition theorem}, $\rho(\Gamma)<1$, and consequently, the matrix $\Gamma$ is stable. Moreover, it holds that:
\begin{equation}
\label{eq: I-Gamma new}
(I-\Gamma)=\left[\begin{array}{cc}
\cblue{\Theta(\mu)}&O(1)\\
O(\mu^2)&\cblue{\Theta(1)}
\end{array}\right],
\end{equation}
and:
\begin{equation}
\label{eq: inverse of I-Gamma new}
(I-\Gamma)^{-1}=\left[\begin{array}{cc}
\cblue{\Theta(\mu^{-1})}&O(\mu^{-1})\\
O(\mu)&\cblue{\Theta(1)}
\end{array}\right].
\end{equation}
Now, using~\eqref{eq: constant e definition new},~\eqref{eq: constant f definition new}, and~\eqref{eq: inverse of I-Gamma new} into~\eqref{eq: single inequality recursion steady state 2 new}, we arrive at:
\begin{align}
\label{eq: single inequality recursion steady state 3 new}
&\limsup_{i\rightarrow\infty}\left[\begin{array}{c}
\expec\|\bphib_i\|^2\\
\expec\|\bphic_i\|^2
\end{array}\right]\preceq\notag\\&\hspace{-1.5mm}\left[\begin{array}{c}
\hspace{-2mm}\beta^2_{q,\max}\overline{\sigma}^2_s O(\mu)\hspace{-1mm}+\hspace{-1mm}\beta^2_{q,\max}\|\widecheck{b}\|^2 O(\mu)\hspace{-1mm}+\hspace{-1mm}\overline{\sigma}^2_s O(\mu)\hspace{-1mm}+\hspace{-1mm}\|\widecheck{b}\|^2 O(\mu)\hspace{-1mm}+\hspace{-1mm}\overline{\sigma}^2_{q}O(\mu^{-1})\hspace*{-2mm}\\
O(\mu^2)+\overline{\sigma}^2_{q} O(1)\hspace{-5.5mm}
\end{array}\right]
\end{align}
By noting that:
\begin{align}
\limsup_{i\rightarrow\infty}\expec\|\bcwt_i\|^2&\overset{\eqref{eq: wt in terms of phi}}=\limsup_{i\rightarrow\infty}\expec\|\cA'\bphit_i\|^2\notag\\
&=\limsup_{i\rightarrow\infty}\expec\|\cV_{\epsilon}\Lambda_{\epsilon}'\cV_{\epsilon}^{-1}\bphit_i\|^2\notag\\
&\leq\limsup_{i\rightarrow\infty}\|\cV_{\epsilon}\|^2\|\Lambda_{\epsilon}'\|^2[\expec\|\bphib_i\|^2+\expec\|\bphic_i\|^2],\label{eq: equation relating w tilde to phi tilde}
\end{align}
and by using~\eqref{eq: single inequality recursion steady state 3 new}, we can finally conclude~\eqref{eq: steady state mean square error}.

\section{Bit Rate stability}
\label{app: rate stability}
Equation~\eqref{eq: sigma square} follows straightforwardly from~\eqref{eq: beta and sigma relation}. Regarding~\eqref{eq: rate stability result}, we first note from~\eqref{eq: non-linearities for ANQ} that $g(t)\geq 0$ for non-negative arguments, and that:
\begin{equation}
\label{eq: bound on n(t) 1}
|\boldsymbol{n}(t)|=\boldsymbol{n}(t)\leq\lfloor g(t)\rfloor + 1\leq g(t) + 1, \qquad \text{for }t\geq 0
\end{equation}
where the first inequality follows from~\eqref{eq: random index n} and~\eqref{eq: equation for m}, and  the fact that the random index $\boldsymbol{n}(t)$ can be either equal to $\lfloor g(t)\rfloor $ or to the integer $\lfloor g(t)+1\rfloor $. The second inequality follows from the definition of the floor operator. Likewise, for $t<0$, we obtain:
\begin{equation}
\label{eq: bound on n(t) 2}
\boldsymbol{n}(t)\geq\lfloor g(t)\rfloor =\lfloor- g(-t)\rfloor =-\lceil g(-t)\rceil\geq- g(-t)-1.
\end{equation}
Combining~\eqref{eq: bound on n(t) 1} and~\eqref{eq: bound on n(t) 2}, we obtain:
\begin{equation}
|\boldsymbol{n}(t)|\leq g(|t|)+1,
\end{equation}
and, consequently, we can write:
\begin{equation}
\lceil\log_2 (|\boldsymbol{n}(t)|+1)\rceil\leq\log_2 (|\boldsymbol{n}(t)|+1)+1\leq\log_2 (g(|t|) + 2)+1.
\label{eq:totalineq2}
\end{equation}
Now, from~\eqref{eq: non-linearities for ANQ}, we can write:
\begin{equation}
\log_2 (g(|t|) + 2)=
\log_2\left(\frac{\ln\left(1+\displaystyle{\frac{\omega}{\eta}} \sqrt{t^2}\right)}{2\ln\left(\omega+\sqrt{1+\omega^2}\right)}
+2\right),
\end{equation}
which can be verified to be a concave function w.r.t. to the argument $t^2$.  Therefore, if we consider a random input~$\boldsymbol{t}$ in~\eqref{eq:totalineq2}, take the expectation, and use  {Jensen's} inequality, we get:
\begin{equation}
\label{eq: intermediate equation 127}
\expec\left[
\lceil\log_2 (|\boldsymbol{n}(\boldsymbol{t})|+1)\rceil\right]\leq\log_2\left(\frac{\ln\left(1+\displaystyle{\frac{\omega}{\eta}} \sqrt{\expec[\boldsymbol{t}^2]}\right)}{2\ln\left(\omega+\sqrt{1+\omega^2}\right)}+2\right)+1.
\end{equation}
By choosing $\boldsymbol{t}=[\bchi_{k,i}]_j$ in~\eqref{eq: intermediate equation 127} and by using the fact that $([\bchi_{k,i}]_j)^2\leq \|\bchi_{k,i}\|^2$, we can upper bound the individual summand in~\eqref{eq: bit rate general at agent k} by the quantity:
\begin{equation}
2+\log_2\left(\frac{\ln\left(1+\displaystyle{\frac{\omega}{\eta}}\sqrt{\expec\|\bchi_{k,i}\|^2}\right)}{2\ln\left(\omega+\sqrt{1+\omega^2}\right)}+2\right).
\label{eq:finalrateformula}
\end{equation}
By taking the limit superior of~\eqref{eq: centralized error vector recursion inequality on sum of  delta 1} as $i\rightarrow\infty$  and by using~\eqref{eq: single inequality recursion steady state 3 new}, we obtain $l_k=\limsup_{i\rightarrow\infty}\expec\|\bchi_{k,i}\|^2=O(\mu^2)$. Consequently, we find that the limit superior of~\eqref{eq:finalrateformula} as $i\rightarrow\infty$ is:
\begin{equation}
2+\log_2\left(\frac{\ln\left(1+\displaystyle{\frac{\omega}{\eta}}\sqrt{l_k}\right)}{2\ln\left(\omega+\sqrt{1+\omega^2}\right)}+2\right).
\label{eq:finalrateformula 2}
\end{equation}
where we have used the fact that the function in~\eqref{eq:finalrateformula} is continuous and increasing in the argument $\expec\|\bchi_{k,i}\|^2$ and, hence,  the limit superior is preserved.
 Under the conditions in~\eqref{eq: condition of the theorem 2}, the above quantity is $O(1)$, and consequently, the rate in~\eqref{eq: bit rate general at agent k} satisfies~\eqref{eq: rate stability result}.

\end{appendices}
\bibliographystyle{IEEEbib}
{\balance{
\bibliography{reference}}}

\newpage
 \thispagestyle{empty}
\section*{Supplementary material}
\subsection{Decentralized consensus optimization}
In this section, we illustrate the performance of the quantized decentralized approach~\eqref{eq: decentralized learning approach with quantization} when applied to solve the consensus optimization problem~\eqref{eq: consensus optimization}. The network of $N=50$ nodes with the link matrix shown in Fig.~\ref{fig: data settings} {\emph{(left)}} is considered.  Each agent is subjected to streaming data $\{\bd_k(i),\bu_{k,i}\}$ satisfying the linear regression model~\eqref{eq: linear data regression model} for some unknown $L\times 1$ vector $w^\star_k$ with $\bv_k(i)$ denoting a zero-mean measurement noise and $L=5$. A mean-square-error cost of the form $J_k(w_k)=\frac{1}{2}\expec|\bd_k(i)-\bu_{k,i}^\top w_k|^2$ is associated with each agent $k$. Similarly to the settings in Sec.~\ref{sec: simulation results}, the processes $\{\bu_{k,i},\bv_k(i)\}$ are assumed to be zero-mean Gaussian with: i) $\expec\bu_{k,i}\bu_{\ell,i}^\top=R_{u,k}=\sigma^2_{u,k}I_L$ if $k=\ell$ and zero otherwise; ii) $\expec\bv_{k}(i)\bv_{\ell}(i)=\sigma^2_{v,k}$ if $k=\ell$ and zero otherwise; and iii) $\bu_{k,i}$ and $\bv_{k}(i)$ are independent of each other. The variances $\sigma^2_{u,k}$ and $\sigma^2_{v,k}$ are illustrated in Fig.~\ref{fig: data settings} {\textit{(right)}}. The signal $\cw^\star=\col\{w^\star_1,\ldots,w^\star_N\}$ is also generated  by smoothing a signal $\cw_o$, which is randomly generated from the Gaussian distribution  $\cN(0.4\times\mathds{1}_{NL},I_{NL})$,  by a graph diffusion kernel with $\tau=3$. The matrix $\cU$ is generated according to $\cU=U\otimes I_L$   where $U=\frac{1}{\sqrt{N}}\mathds{1}_N$. The combination matrix $\cA$ satisfying the conditions in~\eqref{eq: condition A} and having the same structure as the graph   is found by following the same approach as in~\cite{nassif2019distributed}.

 In Fig.~\ref{fig: variable step-size supplementary} {\emph{(left)}}, we report the network mean-square-deviation (MSD) learning curves for~3 different values of the step-size $\mu$. The results were averaged over 100 Monte-Carlo runs.  We used the probabilistic ANQ quantizer of Example 2 with $\omega_k=0.25$ and $\eta_k=\frac{\mu}{\sqrt{2L}}$. We set $\gamma=0.88$. We observe that, in steady-state, the network MSD increases by approximately $3$dB when $\mu$ goes from $\mu_0$ to $2\mu_0$. This means that the performance is on the order of $\mu$. In Fig.~\ref{fig: variable step-size supplementary}  {\emph{(middle)}}, we report the average number of bits per node, per component,  computed according to~\eqref{eq: average number of bits per node}.
As it can be observed, and thanks to the variable-rate quantization, a finite average number of bits is guaranteed (approximately 2.8 bits/component/iteration are required {on average} in steady-state).  

We report in Fig.~\ref{fig: variable step-size supplementary} \emph{(right)} the MSD learning curves when the QSGD quantizer of Table~\ref{table: examples of quantizers} (row 7) is employed instead of the probabilistic ANQ. Apart from the quantizer scheme, the same settings as above were assumed. For the QSGD scheme, we set the number of quantization levels $s$ to $2$. As it can be observed, this choice allows us to compare the average number of bits for the ANQ and QSGD quantizers for similar values of steady-state MSD. From Table~\ref{table: examples of quantizers} (row 7, column 5), the bit-budget required to encode a $5\times 1$ vector using the QSGD scheme ($s=2$) is given by $B_{\text{HP}}+10$. Now, by replacing $B_{\text{HP}}$ by 32 (since we are performing the experiments on MATLAB 2022a which uses 32 bits to represent a floating number in single-precision), we find that the QSGD quantizer requires, at each iteration $i$, an average number of bits per node, per component, equal to $\frac{42}{5}=8.4$, which is almost three times higher than the one obtained in steady-state when the probabilistic ANQ is used. This is expected since the QSGD scheme requires encoding the norm of the input vector with very high precision.

\subsection{Role of the mixing parameter}
To illustrate the impact of the quantization noise parameter $\beta^2_{q,k}$ and of the mixing parameter~$\gamma$ on the network stability, we consider in Figure~\ref{fig: stability conditions reviewer 4} the same settings as in Fig.~4 and Fig.~5 (left) of the manuscript. The step-size $\mu$ is set to $0.006$.  The impact of $\beta^2_{q,k}$ can be seen by comparing the probabilistic ANQ quantizer for two different values of parameter $\omega_k$ (according to~\eqref{eq: beta and sigma relation}, a larger value of $\omega_k$ leads to a larger value of $\beta^2_{q,k}$). By comparing the learning curves of Figure~\ref{fig: stability conditions reviewer 4} corresponding to $\gamma=0.88$, we can see that a large value of  $\beta^2_{q,k}$ can lead to a network instability since condition~\eqref{eq: mixing parameter condition theorem} can no longer be satisfied. To ensure stability, and according to~\eqref{eq: mixing parameter condition theorem}, we need to decrease the value of $\gamma$ -- this is illustrated in  the learning curve of Figure~\ref{fig: stability conditions reviewer 4}  corresponding to $\gamma=0.1$ where a smaller value of $\gamma$ leads to network stability. 
\newpage
\begin{figure*}
\begin{center}
\includegraphics[scale=0.32]{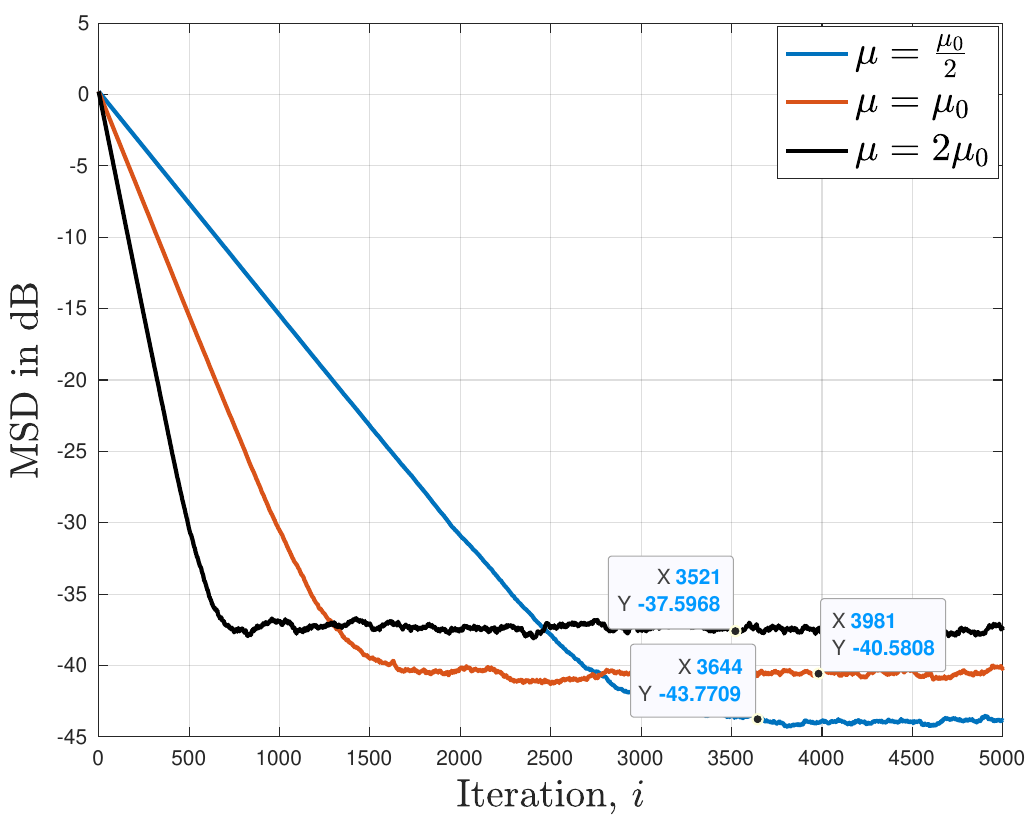}\quad
\includegraphics[scale=0.32]{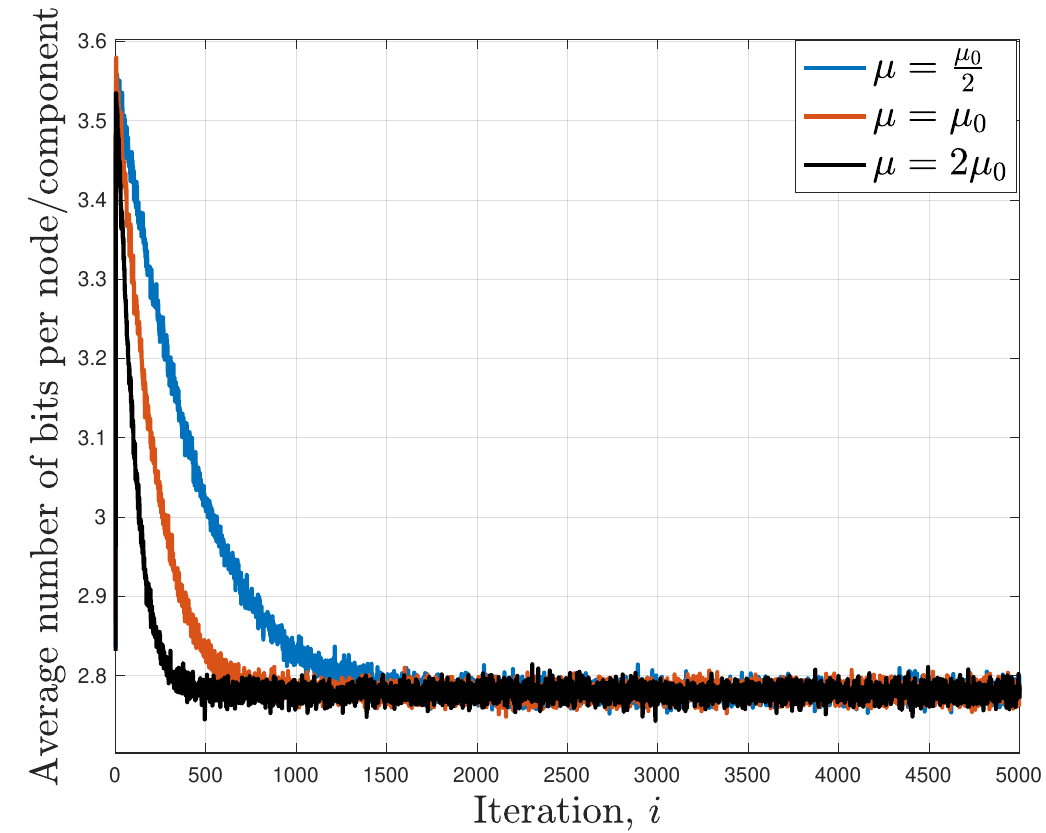}
\includegraphics[scale=0.32]{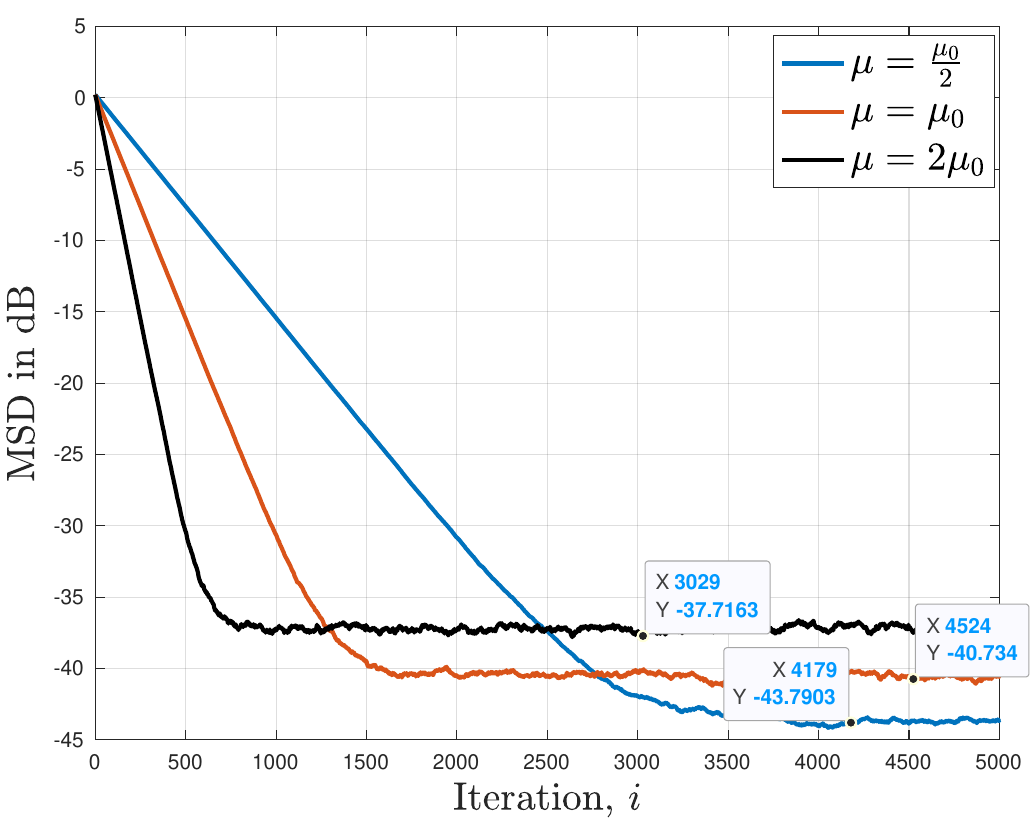}
\caption{Network performance w.r.t. $\cw^o$ in~\eqref{eq: network constrained problem} for three different values of the step-size ($\mu_0=0.003$). {In the {left} and {middle} plots, the probabilistic ANQ quantizer of Example 2 is employed.} \textit{(Left)}  {Evolution of the MSD learning curves. \textit{(Middle)}} Evolution of the average number of bits per node, per component, when  the variable-rate scheme described in {Sec.~\ref{subsec: Uniform and non-uniform randomized quantizers}} is used {to encode the difference $\bchi_{k,i}=\bpsi_{k,i}-\bphi_{k,i-1}$ in~\eqref{eq: stepb}}. {\textit{(Right)}  Evolution of the MSD learning curves when the QSGD quantizer (with $s=2$) of Table~\ref{table: examples of quantizers} is used.}
}
\label{fig: variable step-size supplementary}
\end{center}
\end{figure*}

 \thispagestyle{empty}
 \begin{figure*}
\begin{center}
\includegraphics[scale=0.4]{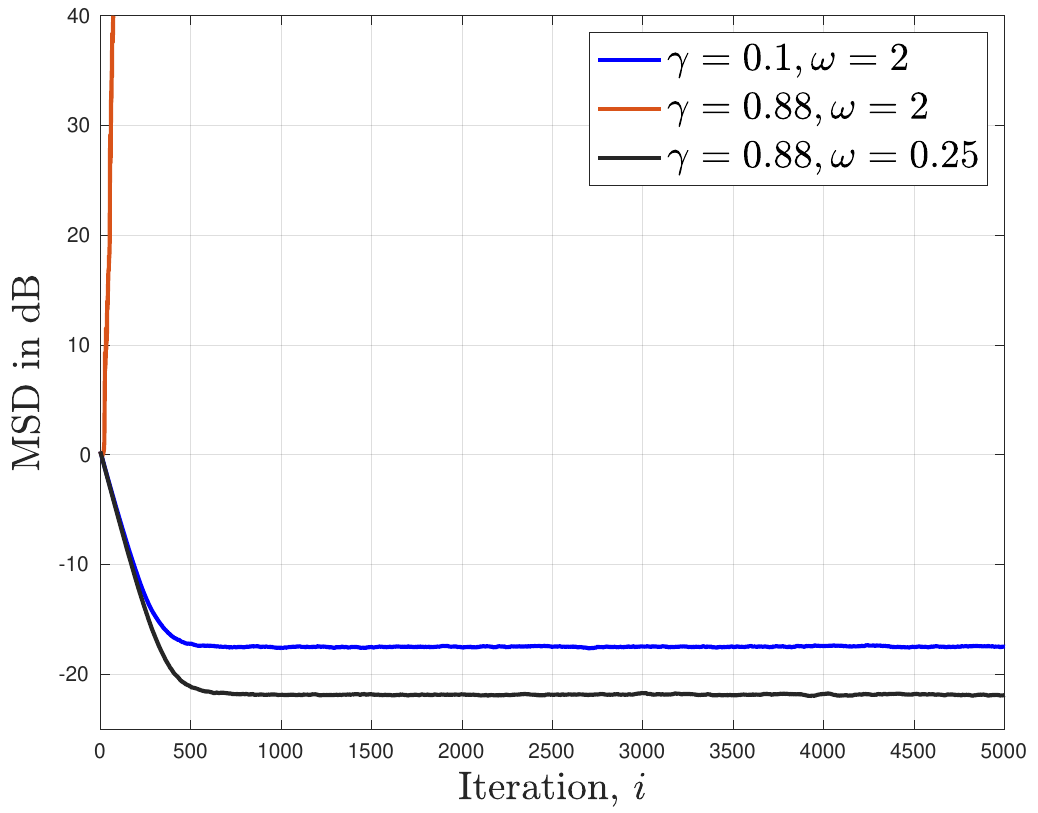}
\caption{Network performance w.r.t. $\cw^o$ in~(3) for different values of the ANQ quantizer's parameter $\omega$ and mixing parameter $\gamma$.}
\label{fig: stability conditions reviewer 4}
\end{center}
\end{figure*}
\end{document}